\setlist{leftmargin=*}
\theoremstyle{plain}
\newtheorem{theorem}{Theorem}[section]
\newtheorem{prop}[theorem]{Proposition}
\newtheorem{fact}[theorem]{Fact}
\newtheorem{lemma}[theorem]{Lemma}
\newtheorem{cor}[theorem]{Corollary}
\theoremstyle{definition}
\newtheorem{defn}[theorem]{Definition}
\newtheorem{remark}[theorem]{Remark}
\newtheorem{problem}[theorem]{Problem}
\newtheorem{expl}{Example}
\newtheorem{conj}{Conjecture}
\def\Th{\operatorname{Th}}
\def\tp{\operatorname{tp}}
\def\EM{\operatorname{EM}}
\global\long\def\Aut{\operatorname{Aut}}
\global\long\def\M{\operatorname{\mathbb{M}}}
\global\long\def\NTP{\operatorname{NTP}}
\global\long\def\TP{\operatorname{TP}}
\global\long\def\tp{\operatorname{tp}}
\global\long\def\ded{\operatorname{ded}}
\global\long\def\opg{\operatorname{opg}}
\global\long\def\qftp{\operatorname{qftp}}
\global\long\def\cof{\operatorname{cf}}
\global\long\def\Th{\operatorname{Th}}
\global\long\def\op{\operatorname{op}}
\title{Mekler's construction and generalized stability}
\author{Artem Chernikov and Nadja Hempel}
\begin{document}
\begin{abstract}
Mekler's construction gives an interpretation of any structure in a finite relational language in a  group (nilpotent of class 2 and exponent $p>2$, but not finitely generated in general). Even though this construction is not a bi-interpretation, it is known to preserve some model-theoretic tameness properties of the original structure including stability and simplicity. We demonstrate that $k$-dependence of the theory is preserved, for all $k \in \mathbb{N}$, and that $\NTP_2$ is preserved. We apply this result to obtain first examples of strictly $k$-dependent groups (with no additional structure).
\end{abstract}

\maketitle

\section{Introduction}
Mekler's construction \cite{mekler1981stability} provides a general method to interpret any structure in a finite relational language in a pure $2$-nilpotent group of finite exponent (the resulting group is typically not finitely generated). This is not a bi-interpretation, however it tends to preserve various model-theoretic tameness properties. First Mekler proved that for any cardinal $\kappa$ the constructed group is $\kappa$-stable if and only if the initial structure was \cite{mekler1981stability}. Afterwards,  it was shown by Baudisch and Pentzel that simplicity of the theory is preserved, and by Baudisch that, assuming stability, CM-triviality is also preserved \cite{baudisch2002mekler}. See \cite[Section A.3]{hodges1993model} for a detailed exposition of Mekler's construction.



The aim of this paper is to investigate further preservation of various generalized stability-theoretic properties from Shelah's classification program \cite{shelah1990classification}. We concentrate on the classes of $k$-dependent and $\NTP_2$ theories.

The classes of $k$-dependent theories (see Definition \ref{def: k-dependence}), for each $k \in \mathbb{N}$,  were defined by Shelah in \cite{shelah2014strongly}, and give a generalization of the class of NIP theories (which corresponds to the case $k=1$). See \cite{shelah2007definable, hempel2016n, chernikov2014n} for some further results about $k$-dependent groups and fields and connections to combinatorics. In Theorem \ref{thm: k-dependence is preserved} we show that Mekler's construction preserves $k$-dependence.
Our initial motivation was to obtain algebraic examples that witness the strictness of the $k$-dependence hierarchy. For $k \geq 2$, we will say that a theory is \emph{strictly $k$-dependent} if it is $k$-dependent, but not $(k-1)$-dependent. The usual combinatorial example of a strictly $k$-dependent theory is given by the random $k$-hypergraph. The first example of a strictly $2$-dependent group was given in \cite{hempel2016n} (it was also considered in \cite[Example 4.1.14]{wagner2002simple}):

\begin{expl}\label{ex: extraspecial}
Let $G$ be $\oplus_{\omega}\mathbb F_p$, where $\mathbb F_p$ is the finite field with $p$ elements. Consider the structure 
$\mathcal{G} = (G, \mathbb F_p, 0, +,\cdot)$, where $0$ is the neutral element, $+$ is addition in $G$, and $\cdot$ is the bilinear form $(a_i)_i \cdot (b_i)_i = \sum_i a_i b_i$ from $G$ to $\mathbb F_p$. This structure is not NIP, but is $2$-dependent. In the case $p=2$, $\mathcal{G}$ is interpretable in an extra-special $p$-group $\mathcal{H} = (H,\cdot, 1)$, and conversely $\mathcal{H}$ is interpretable in $\mathcal{G}$ (see \cite[Proposition 3.11]{macpherson2008one} and the discussion around it, or the appendix in \cite{millietdefinable}). Hence $\mathcal{H}$ provides an example of a strictly $2$-dependent pure group.

 \end{expl} 

In Corollary \ref{cor: strictly k-dep groups} we use Mekler's construction to show that for every $k$, there is a strictly $k$-dependent pure group.

The class of $\NTP_2$ theories was defined in \cite{shelah1980simple} (see Definition \ref{def: NTP2}). It gives a common generalization of simple and NIP theories (along with containing many new important examples), and more recently it was studied in e.g. \cite{chernikov2012forking, chernikov2014theories, yaacov2014independence, chernikov2015groups}. In Theorem \ref{thm: NTP2 is preserved} we show that Mekler's construction preserves $\NTP_2$.

The paper is organized as follows. In Section \ref{sec: Mekler} we review Mekler's construction and record some auxiliary lemmas, including the key lemma about type-definability of partial transversals and related objects (Proposition \ref{prop: type def}). In Section \ref{sec: NIP} we prove that NIP is preserved. In Section \ref{sec: kDep} we discuss indiscernible witnesses for $k$-dependence and give a proof that Mekler's construction preserves $k$-dependence. As an application, for each $k\geq 2$ we construct a strictly $k$-dependent pure group 
and discuss some related open problems.
Finally, in Section \ref{sec: NTP2} we prove that Mekler's construction preserves $\NTP_2$.

\subsection*{Acknowledgements}
We would like to thank the anonymous referee for a very detailed report and many useful suggestions on improving the paper. We also thank JinHoo Ahn for pointing out some typos in the preliminary version.

Both authors were partially supported by the NSF Research Grant DMS-1600796, by the NSF CAREER grant DMS-1651321 and by an Alfred P. Sloan Fellowship.

\section{Preliminaries on Mekler's construction}\label{sec: Mekler}

We review Mekler's construction from \cite{mekler1981stability}, following the exposition and notation in \cite[Section 
A.3]{hodges1993model} (to which we refer the reader for further details).

\begin{defn}
A graph (binary, symmetric relation without self-loops) is called \emph{nice} if it satisfies the following two properties:
\begin{enumerate}
\item there are at least two vertices, and for any two distinct vertices $a$ and $b$ there is some vertex $c$ different from $a$ and $b$ such that $c$ is joined to $a$ but not to $b$;
\item there are no triangles or squares in the graph.
\end{enumerate}
\end{defn}

For any graph $C$ and an odd prime $p$, we define a $2$-nilpotent group of exponent $p$ denoted  by $G(C)$ which is generated freely in the variety of $2$-nilpotent groups of exponent $p$ by the vertices of $C$ by imposing that two generators commute if and only if they are connected by an edge in $C$.

Now, let $C$ be a nice graph and consider the group $G(C)$. Let $G$ be any model of $\Th(G(C))$.  We consider the following $\emptyset$-definable equivalence relations on the elements of $G$. 

\begin{defn}
Let $g$ and $h$ be elements of $G$, then

\begin{itemize}
\item $g \sim h$, if $C_G(g) = C_G(h)$.
\item $g \approx h$ if there is some  natural number $r$  and $c$ in $Z(G)$ such that $g = h^r \cdot c$.
\item $g \equiv_Z h$ if $g\cdot Z(G)= h\cdot  Z(G)$.
\end{itemize}

\end{defn}

Note that  $g \equiv_Z h$ implies $g \approx h$, which implies $g \sim h$.

\begin{defn}
Let $g$ be an element of $G$ and let $q$ be a natural number. We say that $g$ is \emph{of type $q$} if there are $q$ different $\approx$-equivalence classes in the $\sim$-class $[g]_{\sim}$ of $g$. Moreover, we say that $g$ is \emph{isolated} if all non central $h \in G$ which commute with $g$ are $\approx$-equivalent to $g$.
\end{defn}

All non-central elements of $G$ can be partitioned into four different $\emptyset$-definable classes (see \cite[Lemma A.3.6 - A.3.10]{hodges1993model} for the details): 
\begin{enumerate}

\item elements of type $1$ which are not isolated, also referred to as \emph{elements of type $1^{\nu}$} (in $G(C)$ this class includes the elements given by the vertices of $C$), 
\item elements of type $1$ which are isolated, also referred to as \emph{elements of type  $1^{\iota}$},
\item elements of type $p$, and
\item elements of type $p-1$.
\end{enumerate}
The elements of the latter two types are always non-isolated (it is easy to see from the definition that only an element of type $1$ can be isolated).

By \cite[Lemma A.3.8, (a) $\Leftrightarrow$ (b)]{hodges1993model}, for every element $g \in G$ of type $p$, the non-central elements of $G$ which commute with $g$ are precisely the elements $\sim$-equivalent to $g$, and an element $b$ of type $1^{\nu}$ together with the elements $\sim$-equivalent to $b$.

\begin{defn}\label{def: handle}
For every element $g \in G$ of type $p$, we call an element $b$ of type $1^{\nu}$ which commutes with $g$ a \emph{handle of $g$}. 
\end{defn}

\begin{fact}\label{fact: handle}
	By the above, we obtain immediately that a handle is definable from $g$ up to $\sim$-equivalence.
	\end{fact}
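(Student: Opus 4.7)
The plan is to read off the $\sim$-class of a handle as a Boolean combination of $\emptyset$-definable sets with parameter $g$. Write $\varphi(x,y)$ for the formula $[x,y]=1$ expressing commutation, so that $C_G(g) = \varphi(G,g)$ is definable with parameter $g$. By the statement of \cite[Lemma A.3.8]{hodges1993model} quoted just above, whenever $g$ is of type $p$ the non-central part of $C_G(g)$ is the disjoint union of exactly two $\sim$-classes: the class $[g]_\sim$ itself, and the class $[b]_\sim$ of any handle $b$. This is the only input we need.

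First I would note that the three predicates involved are all definable: $Z(G)$ is $\emptyset$-definable (the center is always definable in a group, and in fact in the theory of $G(C)$ the type-$p$ and $1^{\nu}/1^{\iota}$ decomposition is $\emptyset$-definable as recorded in the preceding paragraph), and $x \sim y$ is $\emptyset$-definable since it is defined by $\forall z\,(\varphi(z,x) \leftrightarrow \varphi(z,y))$. Then the formula
\[
\psi(x,g) \;:=\; \varphi(x,g)\,\wedge\, x\notin Z(G)\,\wedge\, \neg\,(x \sim g)
\]
defines, by the cited lemma, precisely the set of elements $\sim$-equivalent to a handle of $g$. Hence if $b$ is any handle of $g$, then $[b]_\sim = \psi(G,g)$, which is exactly the statement that a handle is definable from $g$ up to $\sim$-equivalence.

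There is no real obstacle: the only thing to check is that one is allowed to quote definability of $Z(G)$ and of the $\sim$-relation uniformly, and both are standard (the second follows directly from the definition of $\sim$ as equality of centralizers, and the first from the fact that the center is a definable subgroup in any group, definable uniformly across models of $\Th(G(C))$). The content is therefore entirely encapsulated in the already-cited lemma; the role of this fact is simply to isolate the conclusion for repeated use later in the paper.
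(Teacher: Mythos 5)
Your proposal is correct and follows essentially the same line the paper intends: the paper records this as a ``Fact'' with the one-sentence justification ``By the above,'' pointing to the quoted \cite[Lemma A.3.8]{hodges1993model}, and your argument simply makes explicit the formula $\psi(x,g)$ witnessing the definability that the paper leaves implicit. The only inessential extra remark in your writeup is the observation about definability of the type-$p$/$1^\nu$/$1^\iota$ partition, which you don't actually need since $\psi$ only uses commutation with $g$, non-centrality, and $\neg(x\sim g)$; by the cited lemma these conditions already isolate exactly the $\sim$-class of a handle.
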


Note here, that the center of $G$ as well as the quotient $G/Z(G)$ are elementary abelian $p$-groups. Hence they can be viewed as $\mathbb F_p$-vector spaces. From now on, \emph{independence} over some supergroup of $Z(G)$ will refer to linear independence in terms of the corresponding $\mathbb F_p$-vector space.


\begin{defn}\label{def: transversal}
Let $G$ be a model of $\Th(G(C))$. We define the following:
\begin{itemize}
\item A \emph{$1^{\nu}$-transversal} of $G$ is a set $X^{\nu}$ consisting of one representative for each $\sim$-class of elements of type $1^{\nu}$ in $G$.
\item An element is \emph{proper} if it is not a product of any elements of type $1^{\nu}$ in $G$.
\item A \emph{$p$-transversal} of $G$ is a set $X^p$ of pairwise $\sim$-inequivalent proper elements of type $p$ in $G$ which is maximal with the property that if $Y$ is a finite subset of $X^p$ and all elements of $Y$ have the same handle, then $Y$ is independent modulo the subgroup generated by all elements of type $1^{\nu}$ in $G$ and $Z(G)$.
\item A \emph{$1^{\iota}$-transversal} of $G$ is a set $X^{\iota}$  of representatives of $\sim$-classes of proper elements of type $1^{\iota}$ in $G$ which is maximal independent modulo the subgroup generated by all elements of types $1^{\nu}$ and $p$ in $G$, together with $Z(G)$.
\item A set $X \subseteq G$ is a \emph{transversal of $G$} if $X= X^{\nu} \sqcup X^p \sqcup X^{\iota}$, where $X^{\nu}, X^p$ and $X^{\iota}$ are some transversals of the corresponding types.
\item A subset $Y$ of a transversal is called a \emph{partial transversal} if it is closed under handles (i.\ e.\ for any element $a$ of type $p$ in $Y$ there is an element of type $1^{\nu}$ in $Y$ which is a handle of $a$).
\item For a given (partial) transversal $X$, we denote by $X^{\nu}$, $ X^p$, and $X^{\iota}$ the elements in $X$ of the corresponding types.
\end{itemize}
\end{defn}

\begin{lemma}\label{lem: transversal type def}
	Let $G \models \Th(G(C))$. Given a small tuple of variables $\bar{x} = \bar x^{\nu \frown} \bar{x} ^{p \frown} \bar{x}^{\iota}$, there is a partial type $\Phi(\bar{x})$ such that for any tuples $\bar{a}^\nu, \bar{a}^p$ and $\bar{a}^{\iota}$ in $G$,  we have that $G \models \Phi(\bar{a}^\nu, \bar{a}^p, \bar{a}^{\iota})$ if and only if every element in  $\bar{a}^\nu, \bar{a}^p$ and $\bar{a}^{\iota}$ is of type $1^{\nu}, p$ and $1^{\iota}$, respectively, and $\bar{a} = \bar{a} ^{\nu \frown} \bar{a} ^{p \frown} \bar{a}^{\iota}$ can be extended to a transversal of $G$.
	\end{lemma}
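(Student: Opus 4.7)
The plan is to express the definition of a transversal (Definition~\ref{def: transversal}) as a conjunction of first-order formulas and partial types over $\emptyset$, and take $\Phi(\bar{x})$ to be this conjunction. Concretely, $\Phi$ should record five kinds of conditions: (i) each coordinate of $\bar{x}^\nu, \bar{x}^p, \bar{x}^\iota$ lies in the corresponding $\emptyset$-definable class of type-$1^\nu$, type-$p$, type-$1^\iota$ elements (these classes are $\emptyset$-definable by \cite[Lemmas A.3.6--A.3.10]{hodges1993model}); (ii) the coordinates within each of $\bar{x}^\nu, \bar{x}^p, \bar{x}^\iota$ are pairwise $\sim$-inequivalent; (iii) each coordinate of $\bar{x}^p$ and $\bar{x}^\iota$ is proper; (iv) for every finite subtuple $\bar{y}$ of $\bar{x}^p$ whose entries admit a common handle, $\bar{y}$ is $\mathbb{F}_p$-linearly independent modulo $H^\nu := \langle\,\text{type-}1^\nu\text{ elements}\,\rangle \cdot Z(G)$; and (v) $\bar{x}^\iota$ is $\mathbb{F}_p$-linearly independent modulo $H^{\nu,p} := \langle\,\text{type-}1^\nu\text{ and type-}p\text{ elements}\,\rangle \cdot Z(G)$.

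Conditions (i) and (ii) are plainly first-order formulas, since the type classes and $\sim$ are $\emptyset$-definable and scalars may be taken in $\{0,\ldots,p-1\}$ as $G$ has exponent $p$. To express ``common handle'' in (iv), Fact~\ref{fact: handle} says that any handle of a type-$p$ element is determined up to $\sim$, so ``$g_1,\ldots,g_n$ admit a common handle'' becomes $\exists b \bigl(b \text{ has type } 1^\nu \wedge \bigwedge_{i} [b,g_i]=1\bigr)$, a single formula. For (iii), ``$g$ is a product of $n$ elements of type $1^\nu$'' is a single formula for each $n$, so ``proper'' is the negation of a countable disjunction, hence a partial type. For (iv) and (v), the key observation is that for any $\emptyset$-definable $D \subseteq G$, the statement ``$g \in \langle D \rangle \cdot Z(G)$'' is a countable disjunction over the number $m$ of factors drawn from $D$ of first-order formulas; its negation is therefore a partial type, and linear independence of a finite tuple modulo $\langle D \rangle \cdot Z(G)$ is the conjunction, over nonzero coefficient vectors $(\lambda_i) \in \mathbb{F}_p^{|\bar{y}|}$, of the partial types asserting $\sum_i \lambda_i y_i \notin \langle D \rangle \cdot Z(G)$. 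Taking $D$ to be the type-$1^\nu$ elements (for (iv), with the common-handle hypothesis disjoined in as a formula) and the union of type-$1^\nu$ and type-$p$ elements (for (v)) yields partial types.

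It remains to verify that $\bar{a} \models \Phi$ iff $\bar{a}$ extends to a transversal of $G$. The forward direction is immediate from Definition~\ref{def: transversal}. For the converse, given $\bar{a} \models \Phi$, extend $\bar{a}^\nu$ by choosing one representative from each remaining $\sim$-class of type-$1^\nu$ elements of $G$; extend $\bar{a}^p$ by Zorn's lemma to a set of pairwise $\sim$-inequivalent proper type-$p$ elements maximal subject to the common-handle independence condition; and extend $\bar{a}^\iota$ analogously to a set maximal independent modulo $H^{\nu,p}$. Conditions (i)--(v) in $\Phi$ are exactly what prevents the extension process from failing at $\bar{a}$ itself. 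The main subtlety lies in (iv): without the uniform first-order definability of ``common handle'' provided by Fact~\ref{fact: handle}, the conditional ``common handle $\Rightarrow$ independence'' could not be packaged as a partial type. Once this is in place, the remaining verifications are routine, using the structural classification of non-central elements into the four types recalled just before Fact~\ref{fact: handle}.
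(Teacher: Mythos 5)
Your proof is correct and takes essentially the same approach as the paper's: inspect Definition~\ref{def: transversal} clause by clause, express the type-membership, $\sim$-inequivalence, and ``same handle'' conditions as single formulas (using $\emptyset$-definability of the type classes and Fact~\ref{fact: handle}), and express properness and the two linear-independence-modulo-a-subgroup conditions as countable conjunctions of universal formulas; the paper's $\Phi_{\textrm{prop}}$, $\Phi_{\textrm{ind}}$ and $\phi_{\textrm{h}}$ match your items (iii), (iv)/(v) and the existential ``common handle'' formula, respectively. Your Zorn's-lemma verification of the converse direction is the routine step the paper leaves implicit.
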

\begin{proof}
By inspecting Definition \ref{def: transversal}. For example, let's describe the partial type $\Phi^p(\bar{x}^p)$ expressing that $\bar{x}^p = (x_i^p : i < \kappa)$ can be extended to a $p$-transversal (the conditions on $\bar{x}^\nu$ and $\bar{x}^\iota$ are expressed similarly). For $q \in \mathbb{N}$, let $\phi_q(x)$ be the formula defining the set of all elements of type $q$ in $G$, let $\phi_\iota(x)$ define the set of isolated elements, and let $\phi_{\textrm{h}}(x_1, \ldots, x_q)$ express that $x_1, \ldots, x_q$ have the same handle.
The set of proper elements is defined by 
$$\Phi_{\textrm{prop}}(x) := \{ \forall y_1 \ldots \forall y_{n-1} (\bigwedge_{i<n} \phi_1(y_i) \land \neg \phi_\iota(y_i)) \rightarrow  (x \neq \prod_{i<n} y_i ) : n \in \mathbb{N} \}.$$
The independence requirement in the definition of a $p$-transversal can be expressed as a partial type 
$\Phi_{\textrm{ind}}(\bar{x}^p)$ containing the formula

$$ \phi_{\textrm{h}}(x^p_{i_0}, \ldots, x^p_{i_{n-1}}) \rightarrow $$
$$\left( \forall y_0 \ldots \forall y_{m-1} \forall z \bigwedge_{i<m}(\phi_1(y_i) \land \neg \phi_\iota(y_i) \land z \in Z(G)) \rightarrow z \cdot \prod_{i<m} y_i^{r_i} \neq \prod_{j<n} (x^p_{i_j})^{q_j}  \right)$$ 

for each $m, n \in \mathbb{N}$, $i_0 < \ldots < i_{n-1} < \kappa$ and $q_0, \ldots, q_{n-1}, r_0, \ldots, r_{m-1} \in \mathbb{N}, 0 < q_j <p$.

 Then we can take $\Phi^p(\bar{x}^p) :=  \{ \phi_p(x_i^p) : i < \kappa\} \cup \{ \neg( x_i^p \sim x_j^p) : i<j<\kappa \} \cup \bigcup_{i < \kappa} \Phi_{\textrm{prop}}(x_i^p) \cup \Phi_{\textrm{ind}}(\bar{x}^p)$.

\end{proof}

The following can be easily deduced from \cite[Corollary A.3.11]{hodges1993model}.

\begin{fact}
Let $C$ be a nice graph. 
There is an interpretation $\Gamma$ such that for any model $G$ of $\Th(G(C))$, we have that $\Gamma (G)$ is a model of $\Th(C)$. More specifically, the graph $\Gamma(G) = (V,R)$ is given by the set of vertices $V=\{ g \in G : g \textrm{ is of type }1^\nu, g \notin Z(G) \} / \approx$ and the (well-defined) edge relation $([g]_{\approx}, [h]_{\approx}) \in R$ $\iff$ $g,h$ commute.
\end{fact}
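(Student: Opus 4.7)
The plan is to verify that $\Gamma$ is a $\emptyset$-interpretation whose defining formulas do not depend on the particular model $G$, and then to reduce the elementary-equivalence assertion to the single concrete calculation $\Gamma(G(C)) \cong C$. The vertex set is visibly $\emptyset$-interpretable: the class of non-central type $1^{\nu}$ elements is $\emptyset$-definable, since the preceding partition of non-central elements into four types is $\emptyset$-definable and $Z(G)$ is $\emptyset$-definable in any $2$-nilpotent group, while $\approx$ is $\emptyset$-definable directly from its definition.

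Next I would verify that the edge relation is well-defined on $\approx$-classes. If $g \approx g'$ and $h \approx h'$, write $g = (g')^r c_1$ and $h = (h')^s c_2$ with $c_1, c_2 \in Z(G)$. Since $G$ is $2$-nilpotent, commutators are central and the commutator map descends to a bilinear form $G/Z(G) \times G/Z(G) \to Z(G)$, giving $[g,h] = [g',h']^{rs}$. As $g$ and $h$ are non-central and $G$ has exponent $p$, neither $r$ nor $s$ is divisible by $p$, so $rs$ is invertible modulo $p$; combined with $Z(G)$ being of exponent $p$, this yields $[g,h]=1$ iff $[g',h']=1$. Hence $R$ is well-defined and the defining formulas of $\Gamma$ are independent of $G$.

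It then remains to show $\Gamma(G) \models \Th(C)$ for every $G \models \Th(G(C))$. Since a uniform $\emptyset$-interpretation preserves elementary equivalence, it suffices to establish $\Gamma(G(C)) \cong C$. The natural map sending a vertex $v$ of $C$ to $[v]_{\approx}$ is well-defined (each vertex is a non-central generator, hence of type $1^{\nu}$) and injective (distinct vertices lie in distinct $\approx$-classes by freeness of $G(C)$ in the variety of $2$-nilpotent exponent-$p$ groups). Surjectivity---that every non-central type $1^{\nu}$ element of $G(C)$ is $\approx$-equivalent to some vertex---is the substance of the cited \cite[Corollary A.3.11]{hodges1993model}. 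By construction of $G(C)$, two vertices $a,b$ commute in $G(C)$ iff they are joined in $C$, which matches the definition of $R$, so the natural map is a graph isomorphism.

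The main obstacle is the surjectivity step: one cannot argue it directly inside an arbitrary $G \models \Th(G(C))$, so the structural description of $G(C)$ supplied by Hodges is essential. Everything else reduces to the routine commutator computation above and the standard fact that uniform $\emptyset$-interpretations transport elementary equivalence.
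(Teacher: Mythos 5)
Your proposal is correct and follows the route the paper implicitly takes: the paper offers no argument beyond citing \cite[Corollary A.3.11]{hodges1993model}, and your write-up supplies exactly the routine details that citation is meant to cover --- $\emptyset$-definability of the vertex set and of $\approx$, the bilinearity computation showing the edge relation descends to $\approx$-classes, and the reduction via preservation of elementary equivalence under uniform $\emptyset$-interpretations to the single concrete isomorphism $\Gamma(G(C)) \cong C$. One small imprecision: attributing injectivity of $v \mapsto [v]_{\approx}$ solely to ``freeness'' glosses over the point; what one actually uses is either that the vertices form a basis of $G(C)/Z(G(C))$ as an $\mathbb{F}_p$-space, or, more directly, the niceness of $C$ (for distinct vertices $a,b$ there is a vertex joined to $a$ but not $b$, so $C_{G(C)}(a) \neq C_{G(C)}(b)$, hence $a \not\sim b$ and in particular $a \not\approx b$). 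Either patch is immediate, so this does not affect correctness.
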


The full set of transversal gives another important graph, a so called cover of a nice graph, which we define below.

\begin{defn}\label{def: cover} \begin{enumerate}
\item
	Let $C$ be an infinite nice graph. A graph $C^+$ containing $C$ as a subgraph is called a \emph{cover} of $C$ if for every vertex $b \in C^+ \setminus C$, one of the following two statements holds:
	\begin{itemize}
		\item there is a unique vertex $a$ in $C^+$ that is joined to $b$, and  moreover $a$ is in $C$ and it has infinitely many adjacent vertices in $C$;
		 
		 \item $b$ is joined to no vertex in $C^+$.
		\end{itemize}
	
\item	A cover $C^+$ of $C$ is a \emph{$\lambda$-cover} if 
	\begin{itemize}
		\item for every vertex $a$ in $C$ the number of vertices in $C^+\setminus C$ joined to $a$ is $\lambda$ if $a$ is joined to infinitely many vertices in $C$, and zero otherwise;
		\item the number of new vertices in $C^+ \setminus C$ which are not joined to any other vertex in $C^+$ is $\lambda$.
		\end{itemize}
	\end{enumerate}

\end{defn}
Observe that a proper cover of a nice graph is never a nice graph.

\begin{fact}\cite[Corollary A.3.14(a) + proof of (c)]{hodges1993model}. \label{rem: X as a cover}
Given a $1^\nu$-transversal $X^\nu$ of $G$, we identify  the elements of $X^\nu$ with the set of vertices of $\Gamma(G)$ by mapping $x \in X^\nu$ to its class $[x]_{\approx}$. Then a transversal $X$ can be viewed as a cover of the nice graph given by the elements of type $1^\nu$ in $X$, with the edge relation given by commutation. (See  Remark \ref{rem: cover explanation} for an explanation.)
\end{fact}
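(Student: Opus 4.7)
The plan is to verify the two clauses of Definition \ref{def: cover}(1) for each vertex $b \in X \setminus X^\nu = X^p \sqcup X^\iota$, by case analysis on the type of $b$. The key input is the explicit description of centralizers of non-central elements recorded just before Definition \ref{def: handle}, combined with the fact that a transversal contains exactly one representative per relevant $\sim$-class.

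Suppose first that $b \in X^\iota$, so $b$ is of type $1^\iota$ and hence isolated. By the definition of isolated, every non-central element of $G$ commuting with $b$ is $\approx$-equivalent to $b$, in particular $\sim$-equivalent to $b$ and of type $1^\iota$. Since $X^\iota$ contains a single $\sim$-representative per class of elements of type $1^\iota$, and no element of $X^\nu$ or $X^p$ can be $\sim$-equivalent to $b$ (the type being preserved under $\sim$), $b$ has no neighbor in $X \setminus \{b\}$. This is the second alternative of the cover definition.

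Suppose next that $b \in X^p$. By the consequence of \cite[Lemma A.3.8]{hodges1993model} quoted just before Definition \ref{def: handle}, the non-central elements of $G$ commuting with $b$ are precisely the elements $\sim$-equivalent to $b$, together with a handle $a$ of $b$ and all elements $\sim$-equivalent to $a$. Since each $\sim$-class is represented at most once in $X$, and by Fact \ref{fact: handle} any element $\sim$-equivalent to $a$ is again a handle of $b$, the only vertex of $X \setminus \{b\}$ commuting with $b$ is the unique $a' \in X^\nu$ with $a' \sim a$. Thus $b$ has exactly one neighbor in $X$, and this neighbor lies in $X^\nu$, as required by the first alternative.

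It remains to check that any such handle $a'$ has infinitely many neighbors in $X^\nu$. Under the identification of $X^\nu$ with the vertices of $\Gamma(G)$, this is equivalent to $[a']_\approx$ having infinite valence in $\Gamma(G)$, and since $\Gamma(G) \equiv C$ it reduces to a first-order statement about $C$: namely, that any vertex of the nice graph $C$ which serves as a handle of some type $p$ element of $G(C)$ has infinitely many neighbors in $C$. This structural property of handles is the main non-formal step of the argument and is the point that the authors defer to Remark \ref{rem: cover explanation}; it is extracted from the analysis of centralizers and types in \cite[Section A.3]{hodges1993model}.
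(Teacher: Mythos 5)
Your case analysis is sound and follows essentially the paper's route: the paper's own ``proof'' of this fact is the citation to \cite[Corollary A.3.14(a) + proof of (c)]{hodges1993model} together with the pointer to Remark~\ref{rem: cover explanation}, and what you supply is exactly the content of Hodges' Lemma~A.3.8 (the characterization of centralizers by type, recorded in the paper just before Definition~\ref{def: handle}) made explicit, plus the same deferral of the infinite-valence claim to Remark~\ref{rem: cover explanation}. One caveat worth flagging: the sentence claiming that ``since $\Gamma(G) \equiv C$ it reduces to a first-order statement about $C$'' is off --- neither ``$a'$ is a handle of some type-$p$ element'' nor ``$a'$ has infinitely many neighbors'' is a first-order condition, and elementary equivalence of $\Gamma(G)$ with $C$ does not by itself transfer the infinite-valence fact. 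This is precisely the subtlety the paper acknowledges in Remark~\ref{rem: cover explanation} (where they note the cover property ``doesn't seem to follow explicitly from the statement of Axiom 10'' and instead argue via the vector space $G/Z(G)$, Hodges' Lemma~A.3.9, and Lemma~A.3.2(c)). Since you correctly identify this as the nontrivial step and defer to the same remark, the proof is not broken, but characterizing it as a first-order transfer understates the work involved.
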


\begin{fact}\cite[Theorem A.3.14, Corollary A.3.15]{hodges1993model}\label{fac: decomposition}
Let $G$ be a model of $\Th(G(C))$ and let $X$ be a transversal of $G$. 
\begin{enumerate}
\item There is a subgroup of $Z(G)$ which we denote by $K_X$ such that $G = \langle X \rangle \times K_X$.
\item The group $K_X$ is an elementary abelian $p$-group, in particular $\Th(K_X)$ is stable and eliminates quantifiers.
\item If $G$ is saturated and uncountable, then both the graph $\Gamma(G)$ and the group $K_X$ are also saturated (as $|K_X| = |G|$ and $\Th(K_X)$ is uncountably categorical).
\item If $G$ is a saturated model of $\Th(G(C))$, then every automorphism of $\Gamma(G)$ can be lifted to an automorphism of $G$.
\item $\langle X \rangle \cong G(X)$ via an isomorphism which is the identity on the elements in $X$ (where $X$ is viewed as a graph as in Remark \ref{rem: X as a cover}, note that it doesn't have to be nice).
\item If $G$ is saturated then $X$ (where $X$ is viewed as a graph as in Remark \ref{rem: X as a cover}) is a $|G|$-cover of $\Gamma(G)$.

\end{enumerate}
Alternatively, one could work with a special model instead of a saturated one to avoid any set-theoretic issues. 
\end{fact}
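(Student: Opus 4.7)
The plan is to establish the six parts of Fact \ref{fac: decomposition} in order, relying on the classification of non-central elements into the four types and on the universal property defining $G(C)$.

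For (1) and (2), I first argue that $\langle X \rangle \cdot Z(G) = G$. Every non-central $g \in G$ has one of the four types listed after Definition \ref{def: handle}, so it suffices to show that modulo $Z(G)$ each such $g$ lies in $\langle X \rangle$. For type $1^{\nu}$, this holds by the defining property of $X^\nu$ (one representative per $\sim$-class). For type $p$, the maximality clause in Definition \ref{def: transversal} forces $g$ into the subgroup generated by $X^p$, elements of type $1^{\nu}$, and $Z(G)$, hence into $\langle X \rangle \cdot Z(G)$; the analogous argument handles type $1^{\iota}$ using $X^{\iota}$. Since $Z(G)$ is an elementary abelian $p$-group and therefore an $\mathbb{F}_p$-vector space, the subspace $\langle X \rangle \cap Z(G)$ admits a linear complement $K_X \leq Z(G)$. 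Then $G = \langle X \rangle \cdot K_X$ with $\langle X \rangle \cap K_X = \{1\}$ and $K_X$ central, giving the direct product in (1); part (2) is immediate since $K_X$ is a subgroup of $Z(G)$.

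For (5), the universal property of $G(X)$ as the free object on $X$ in the variety of $2$-nilpotent groups of exponent $p$ with the prescribed commutation pattern yields a surjection $G(X) \twoheadrightarrow \langle X \rangle$ fixing $X$ pointwise (using Fact \ref{fact: handle} to read off commutation relations on $X^p$ from $G$). Injectivity amounts to checking that no extra relations hold among elements of $X$: for type $1^{\nu}$, the choice of one representative per $\sim$-class provides linear independence modulo $Z(G)$; for types $p$ and $1^{\iota}$ the independence conditions in Definition \ref{def: transversal} are exactly what is needed to rule out nontrivial words in $X$ collapsing in $G$. For the saturation parts (3), (4), (6), assume $G$ is saturated of uncountable cardinality $\kappa$. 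Since $\Gamma(G)$ is a definable quotient of a definable set in $G^{\mathrm{eq}}$ via the $\emptyset$-definable equivalence relation $\approx$, it inherits saturation as an imaginary sort; $K_X$ has cardinality $\kappa$ (as $|Z(G)| = \kappa$) and its theory is uncountably categorical, hence $K_X$ is saturated. For the lifting property (4), I would lift $\sigma \in \Aut(\Gamma(G))$ first to a bijection of a $1^{\nu}$-transversal, then extend by a back-and-forth over $X^p$ (matching handles via Fact \ref{fact: handle}) and over $X^{\iota}$ (respecting the independence requirements), apply the universal property from (5) to extend to $\langle X \rangle$, and finally extend to $G$ by any $\mathbb{F}_p$-linear automorphism of $K_X$. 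The cover condition (6) is then a counting statement: using saturation and the type-definability from Lemma \ref{lem: transversal type def}, for each vertex $a \in \Gamma(G)$ with infinitely many neighbors, both the set of type-$1^{\nu}$ elements $\approx$-equivalent to a fixed representative and the set of elements of $X^p$ with handle $a$ have cardinality $\kappa$; the isolated type-$1^{\nu}$ vertices and the elements of $X^{\iota}$ account for the $\kappa$ disconnected vertices in the cover.

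The main obstacle is the back-and-forth extension in (4): one must simultaneously preserve types, handles, and all the independence constraints from Definition \ref{def: transversal} while moving $X^p$ and $X^{\iota}$, which requires careful use of saturation and of the type-definability established in Lemma \ref{lem: transversal type def} at each inductive stage. The decomposition and universal-property arguments in (1), (2), (5) are essentially linear-algebraic and routine once the maximality clauses in Definition \ref{def: transversal} are unpacked, but the existence of a complement $K_X$ compatible with the direct-product structure also deserves a brief explicit justification.
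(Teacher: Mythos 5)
This statement is presented in the paper as a citation to Hodges' book rather than being proved from scratch; the authors' only original contributions here are Remarks \ref{rem: unbounded commutator products} and \ref{rem: cover explanation}, which supply missing details for parts (3) and (6). Your sketch, by contrast, attempts to reconstruct arguments for all six parts, and while the broad strategy for (1), (2), (5), (4) resembles what one finds in Hodges, the sketch runs into a genuine gap exactly where the paper also takes the trouble to patch the cited source.

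The gap is in your handling of part (3). You claim ``$K_X$ has cardinality $\kappa$ (as $|Z(G)| = \kappa$),'' but this inference is not valid. By Lemma \ref{lem: invariance of tranversal}, $G' = \langle X \rangle'$, so $K_X$ is a linear complement of a subspace of $Z(G)$ containing $G'$, and a priori $G'$ could have small (even countable) index in $Z(G)$, in which case $K_X$ would be small and nothing forces it to be saturated. One must therefore separately establish that $Z(G)/G'$ is an $\mathbb{F}_p$-vector space of dimension $|G|$ in a saturated model. This is precisely what Remark \ref{rem: unbounded commutator products} does: the authors note that the argument in \cite{hodges1993model} ``appears to assume that $K_X$ is definable, which is not the case,'' and they construct the sentences $\psi_{m,n}$ asserting, in effect, that over any finitely many central elements one can find a further central element independent from them over $G'$. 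These hold in $G(C)$ because $C$ is infinite and nice (so one can find arbitrarily long independent families of commutators $[a,b_i]$), and then saturation gives a set of size $|G|$ independent over $G'$, hence $|K_X| = |G|$. Your sketch needs this additional argument; without it, (3) is not proved, and consequently the automorphism extension in (4) and the cover cardinality count in (6) are also left without support for the part involving $K_X$.

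A secondary remark: for (6) you correctly identify the shape of the argument (show the relevant sets in Definition \ref{def: cover} are infinite and then invoke saturation), which matches the paper's Remark \ref{rem: cover explanation}, but you should be explicit that the infinitude of these sets is not immediate from the axioms as stated in \cite{hodges1993model} and requires an argument similar in flavor to the one for (3), working in the vector space $G/Z(G)$ rather than $Z(G)/G'$.
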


\begin{remark}\label{rem: unbounded commutator products}
The reason given for (3) in \cite{hodges1993model} appears to assume that $K_X$ is definable, which is not the case, so we provide a proof.

Assume that $C$ is an infinite nice graph. 
%
%
%
Given $m,n \in \omega$, let $\psi_{m,n}$ be the sentence
$$ \forall g_1, \ldots, g_{n} \in Z \exists x \in Z \left( \forall z_1, \ldots, z_{2m} \bigwedge \left(g^{\alpha_1}_1 \cdot \ldots \cdot g_{n}^{\alpha_{n}} \cdot x^\alpha \neq \prod_{j=1}^{m}[z_{2j-1}, z_{2j}]  \right) \right),$$
where the conjunction is over all $(\alpha_1, \ldots, \alpha_{n-1}, \alpha) \in 
\{ 0, \ldots, p-1 \}^{n+1}$ with $\alpha \neq 0$.

We claim that $G(C) \models \psi_{m,n}$ for all $m,n \in \omega$. Indeed, by \cite[Lemma A.3.2(b)+(d)]{hodges1993model}, a basis for $G(C)'=Z(G(C))$ is given by the set of all elements of the form $[a,b]$ with $a,b$ elements in $C$ not connected by an edge. Since $C$ is nice, given any vertices $v \neq w$ there is at most one vertex to which both $v$ and $w$ are connected. As $C$ is infinite, we can find  pairwise distinct elements $a$ and $\{ b_i : i < \omega \}$ in $C$ such that $a$ is not connected by an edge to any of the $b_i$'s. Indeed, take any pairwise distinct elements $a', a'', (c_i : i < \omega)$ in $C$. For each $i$, at most one of $a', a''$ is connected to $c_i$, hence for one of $a,a'$ there are infinitely many $i<\omega$ such that it is not connected to $c_i$.
Hence $\{[a,b_i] : i < \omega \}$ is a linearly independent set and $Z(G(C))$ has infinite dimension. Now, given arbitrary $m,n$ and $g_i \in Z(G(C))$, assume that each $g_i$ is a linear combination of at most $t$ vectors from the basis. If we take $x$ to be a product of more than $nt + m$ non-trivial commutators, the equality above cannot hold (as $\alpha \neq 0$, and for any $l \in \omega$ the sum of $l$ vectors in a basis cannot be written as a linear combination of less than $l$ vectors in the same basis).

Now let $G$ be a saturated model of $\Th(G(C))$, $\lambda := |G|$. In view of Remark \ref{rem: invariance of tranversal}, it is enough to find a sequence $(h_i : i <\lambda)$ of elements in $Z(G)$ which are linearly independent in the elementary abelian $p$-group $Z(G)$ over $G'$. This can be done by transfinite induction using saturation of $G(C)$ and the fact that given any finitely many elements $g_1, \ldots, g_n \in Z(G)$, we can find an element $x \in Z(G)$ independent from them over $G'$ since $G \models \psi_{m,n}$ for all $m \in \omega$ (see Definition \ref{def: indep over derived subgp}).
 
\end{remark}


\begin{remark}\label{rem: cover explanation}
We provide some details concerning Fact \ref{fac: decomposition}(6).
The fact that this is a cover in any model of $\Th(G(C))$ doesn't seem to follow explicitly from the statement of Axiom 10, as stated in the proof of \cite[Corollary A.3.14(c)]{hodges1993model}. However, it follows by an argument similar to the proof of Remark \ref{rem: unbounded commutator products}. This time, working in the vector space $ G/Z(G)$ and using \cite[Lemma A.3.9]{hodges1993model}, the proof of Axiom 10, and \cite[Lemma A.3.2(c)]{hodges1993model} in $G(C)$, all the relevant sets in Definition \ref{def: cover} are infinite, so by saturation can be demonstrated to be of size $|G|$ in a saturated model $G$.  
\end{remark}

The following lemma is a refinement of Fact \ref{fac: decomposition}(4) and \cite[Lemma 4.12]{baudisch2002mekler}.

\begin{lemma} \label{Lem_GlueAut}
	Let $G$ be a saturated model of $\Th(G(C))$, $X$ be a transversal, and $K_X \leq Z(G)$ be such that  $G = \langle X \rangle \times K_X$. Let  $Y$ and $Z$ be two small subsets  of $X$ and let $\bar h_1, \bar h_2$ be two tuples in $K_X$. Suppose that 
	\begin{itemize}
		\item there is a bijection $f$ between  $Y$ and $Z$ which respects the $1^{\nu}$-, $p$-, and $1^{\iota}$-parts, the handles, and $\tp_{\Gamma} (Y^{\nu}) = \tp_{\Gamma} (f(Y^{\nu}))$,
		\item $\tp_{K_X}(\bar h_1) = \tp_{K_X}(\bar h_2)$.
		\end{itemize}
Then there is an automorphism of $G$ coinciding with $f$ on $Y$ and sending  $\bar h_1$ to  $\bar h_2$. 
\end{lemma}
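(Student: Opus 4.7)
The plan is to build $\Psi$ as a direct product $\tilde{\phi} \times \tau$, where $\tilde{\phi} \in \Aut(\langle X \rangle)$ extends $f$ and $\tau \in \Aut(K_X)$ sends $\bar{h}_1$ to $\bar{h}_2$; the heart of the argument is constructing $\tilde{\phi}$ via a graph automorphism of the cover $X$.

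First, since $\Gamma(G)$ is saturated by Fact \ref{fac: decomposition}(3) and $\tp_{\Gamma}(Y^\nu) = \tp_{\Gamma}(f(Y^\nu))$, I extend $f|_{Y^\nu}$ to an automorphism $\sigma$ of $\Gamma(G)$. Identifying $X^\nu$ with $\Gamma(G)$ via Fact \ref{rem: X as a cover}, $\sigma$ gives the action on $X^\nu$; because $f$ respects handles (and handles of elements of $Y^p$ lie in $Y^\nu$), $\sigma$ agrees with $f$ on those handles. I then extend to a bijection $\phi : X \to X$ preserving the partition $X = X^\nu \sqcup X^p \sqcup X^\iota$. By Fact \ref{fac: decomposition}(6), $X$ is a $|G|$-cover of $\Gamma(G)$, so for every vertex $v$ with infinitely many neighbors in $\Gamma(G)$ the set $\{a \in X^p : a \text{ has handle } v\}$ has cardinality $|G|$, of which only fewer than $|G|$ lie in $Y^p$; hence I may extend $f|_{Y^p}$ fiber by fiber to a bijection of $X^p$ matching the handle-fiber over $v$ with that over $\sigma(v)$. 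A parallel cardinality argument (using $|X^\iota| = |G|$ from the $|G|$-cover property) extends $f|_{Y^\iota}$ to a bijection of $X^\iota$. Because edges in the cover $X$ occur only within $X^\nu$ and between each element of $X^p$ and its handle in $X^\nu$, the resulting $\phi$ is a graph automorphism of $X$. This extension step is where the main difficulty lies, and where the $|G|$-cover property is essential; the hypotheses on $f$ ensure the partial data on $Y$ is consistent with such an extension.

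By Fact \ref{fac: decomposition}(5), $\langle X \rangle \cong G(X)$ via an isomorphism fixing $X$, and $G(X)$ is free on the graph $X$ in the variety of $2$-nilpotent groups of exponent $p$; hence $\phi$ lifts to a group automorphism $\tilde{\phi}$ of $\langle X \rangle$ extending $f$. Using saturation of $K_X$ (Fact \ref{fac: decomposition}(3)) together with $\tp_{K_X}(\bar{h}_1) = \tp_{K_X}(\bar{h}_2)$, I pick $\tau \in \Aut(K_X)$ with $\tau(\bar{h}_1) = \bar{h}_2$. Finally, because $G = \langle X \rangle \times K_X$, the coordinatewise product $\Psi := \tilde{\phi} \times \tau$ is a group automorphism of $G$; it agrees with $f$ on $Y \subseteq \langle X \rangle$ and sends $\bar{h}_1$ to $\bar{h}_2$, as required.
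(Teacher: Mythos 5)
Your proof is correct and follows essentially the same approach as the paper: extend $f|_{Y^\nu}$ to an automorphism of $\Gamma(G)\cong X^\nu$ by saturation, extend to a graph automorphism of the cover $X$ using the $|G|$-cover property (the paper calls this ``a straightforward back-and-forth argument,'' which you unpack as a fiber-by-fiber cardinality matching), lift to $\langle X\rangle\cong G(X)$ via freeness, use saturation of $K_X$ for the central part, and glue by the direct product decomposition. The only minor omission is that when matching fibers you should remove both the small set $Y^p\cap\mathrm{fiber}(v)$ from the source and its $f$-image from the target fiber over $\sigma(v)$, but both complements still have size $|G|$, so the bijection exists as you claim.
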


\begin{proof}
By Remark \ref{rem: X as a cover}, we identify $\Gamma(G)$ with $X^\nu$. By saturation of $\Gamma(G)$, $f \restriction Y_\nu$ extends to an automorphism $\sigma$ of the graph $X^\nu$. As $X$ is a $|G|$-cover of $X^\nu$ by saturation of $G$, and $f$ respects the $1^{\nu}$-, $p$-, and $1^{\iota}$-parts and the handles, $\sigma$ extends to an automorphism $\tau$ of the graph $X$ agreeing with $f$ using a straightforward back-and-forth argument. By Fact \ref{fac: decomposition}(5), we have that $\langle X \rangle \cong G(X)$ and $\tau$ lifts to an automorphism of the group $G(X)$, hence to an automorphism $\tilde{\tau}$ of $\langle X \rangle$ extending $f$ by construction. As $K_X$ is saturated by Fact \ref{fac: decomposition}(3), there is an automorphism $\rho$ of $K_X$ which maps $\bar h_1$ to $\bar h_2$. We can now take the cartesian product of $\tilde{\tau} \times \rho$ to obtain an automorphism of $G$ which extends $f$ and maps $\bar h_1$ to $\bar h_2$.  
\end{proof}

Next, we observe that in Fact \ref{fac: decomposition} the choice of a transversal and an elementary abelian subgroup of the center in the decomposition of $G$ can be made entirely independently of each other.

\begin{lemma}\label{lem: invariance of tranversal}
Let $G$ be any model of $\Th(G(C))$ and let $X$ be a transversal of $G$. Then we have $G' = \langle X\rangle '$.
\end{lemma}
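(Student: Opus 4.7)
The plan is to exploit the direct product decomposition $G = \langle X \rangle \times K_X$ from Fact \ref{fac: decomposition}(1), together with the fact that $K_X \leq Z(G)$. The inclusion $\langle X \rangle' \leq G'$ is immediate since $\langle X \rangle \leq G$, so the entire content is the reverse inclusion $G' \leq \langle X \rangle'$.

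First I would note that every $g \in G$ admits a unique decomposition $g = x \cdot k$ with $x \in \langle X \rangle$ and $k \in K_X$. Then for any $g_1 = x_1 k_1$ and $g_2 = x_2 k_2$ in $G$, I would compute
\[
[g_1, g_2] = [x_1 k_1, x_2 k_2] = [x_1, x_2],
\]
using the commutator identities $[ab,c] = [a,c]^b \cdot [b,c]$ and $[a, bc] = [a,c] \cdot [a,b]^c$ together with the centrality of $k_1, k_2$ (so $[k_i, \cdot] = [\cdot, k_i] = 1$ and conjugation by $k_i$ acts trivially). Since $G'$ is generated by commutators of arbitrary elements of $G$, and every such commutator lies in $\langle X \rangle'$ by the above computation, we obtain $G' \leq \langle X \rangle'$.

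There is no real obstacle here: the lemma is essentially a one-line consequence of the direct product decomposition plus centrality of $K_X$. The only thing to be careful about is to phrase the commutator simplification cleanly, rather than appealing to any type-definability or saturation considerations, since those play no role in this statement.
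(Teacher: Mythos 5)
Your proof is correct and follows essentially the same route as the paper: both use the decomposition $G = \langle X\rangle \times K_X$ with $K_X \leq Z(G)$ and observe that the central factor drops out of every commutator, so $G' = \langle X\rangle'$. The paper attributes the commutator simplification to $2$-nilpotence whereas you correctly isolate centrality of $K_X$ as the operative fact, but this is a cosmetic difference, not a different argument.
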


\proof Let $K$ be a subgroup of $Z(G)$ as in Fact \ref{fac: decomposition}, such that $G = \langle X\rangle \times K$. It is enough to show that for all $g, g' \in G$, we have that $[g,g']$ is in $\langle X\rangle '$. We choose 
$x, y \in \langle X \rangle $ and $h,k \in K$ such that $g =  x \cdot  h$ and $g' = y \cdot  k$. Then, using that $G$ is $2$-nilpotent, we have $$[g,g'] = [  x \cdot h, y \cdot k] = [x, y],$$ which is in $\langle X\rangle '$.
\qed

\begin{remark}\label{rem: invariance of tranversal}
Let $X$ and $Y$ be two transversals, and let $K_X$ be such that $G = \langle X \rangle \times K_X$. Then the proof of \cite[Theorem A.3.14(d)]{hodges1993model} and Lemma \ref{lem: invariance of tranversal} imply that $G = \langle Y \rangle \times K_X$. Furthermore, that proof also implies that for any set $B$ of elements in $Z(G)$ which are linearly independent in  the elementary abelian $p$-group $Z(G)$ over $G'$ (seen as an $\mathbb F_p$-vector space), there is a subgroup $K$ of $G$ containing $B$ such that for any transversal $X$ of $G$, we have that $G=\langle X\rangle \times K$.
	\end{remark}


\begin{defn}\label{def: indep over derived subgp}

For any small cardinal $\kappa$, let $\Psi((y_i)_{ i \in \kappa})$ be the partial type  consisting of the formulas ``$y_i \in Z(G)$'' for all $i < \kappa$, and the formulas   
$$ \forall z_0, \dots, z_{2m-1}
\left(
\bigwedge_{(\alpha_0, \dots , \alpha_{n-1}) \in \{0, \ldots, p-1 \}^{ n}\setminus ( 0, \dots, 0)} 
\left(\, y_{i_0}^{\alpha_0} \cdot \ldots \cdot y_{i_{n-1}}^{\alpha_{n-1}}
 \neq 
 \prod_{j=0}^{m-1} [z_{2j}, z_{2j+1}] \ 
 \right)\right)$$
 
for all $m,n \in \mathbb{N}$ and $i_0, \ldots, i_n \in \kappa$. 

An easy inspection yields that for any tuple $\bar{b}$, $\bar b \models \Psi(\bar y)$ if and only if $\bar{b}$ is a tuple of central elements linearly independent in  the elementary abelian $p$-group $Z(G)$ over $G'$ (seen as an $\mathbb F_p$-vector space).
\end{defn}

Let now $\Phi(\bar x)$, with $\bar x = \bar{x}^{\nu \frown} \bar{x}^{p \frown} \bar{x}^{\iota}$, be the partial type given by Lemma \ref{lem: transversal type def}.
 Consider the partial type $$\pi(\bar x ,\bar{y}) = \Phi(\bar x) \cup  \Psi(\bar y).$$

\begin{prop} \label{prop: type def}
Let $G$ be a model of $\Th(G(C))$. Then $G \models \pi(\bar a ,\bar{b})$ if and only if we can extend $\bar a$ to a transversal $X$ of $G$ and find a subset $H \subseteq Z(G)$ containing $\bar{b}$ which is linearly independent over $G'$, so that $G = \langle X \rangle \times \langle H \rangle$.
\end{prop}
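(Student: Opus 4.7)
The plan is to handle the two directions separately; both rely on the explicit descriptions of $\Phi$ and $\Psi$ together with the flexibility in choosing a decomposition $G=\langle X\rangle \times K$ that was recorded earlier in the section.

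For the $(\Leftarrow)$ direction, suppose $G=\langle X\rangle \times \langle H\rangle$ with $X$ a transversal extending $\bar a$ and $H \subseteq Z(G)$ containing $\bar b$ and linearly independent over $G'$. Then $\bar a \models \Phi(\bar x)$ by Lemma \ref{lem: transversal type def}, and $\bar b \models \Psi(\bar y)$ by the characterization recorded after Definition \ref{def: indep over derived subgp}. There is no obstacle here.

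For the $(\Rightarrow)$ direction, suppose $G \models \pi(\bar a, \bar b)$. First I would use $\Phi(\bar a)$ together with Lemma \ref{lem: transversal type def} to extend $\bar a$ to some transversal $X$ of $G$. Next, $\Psi(\bar b)$ tells us that $\bar b$ is a tuple of central elements linearly independent over $G'$, so I would apply Remark \ref{rem: invariance of tranversal} to the set $B = \{b_i : i < \kappa\}$ to produce a subgroup $K \leq Z(G)$ containing $\bar b$ such that $G = \langle X \rangle \times K$ for our already chosen $X$. This decoupling of the two choices is the key input.

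It then remains to enlarge $\bar b$ inside $K$ to a set $H$ with $\langle H\rangle=K$. By Fact \ref{fac: decomposition}(2), $K$ is an elementary abelian $p$-group, i.e.\ an $\mathbb F_p$-vector space, and by Lemma \ref{lem: invariance of tranversal} we have $G' = \langle X \rangle' \subseteq \langle X \rangle$, hence $K \cap G' \subseteq K \cap \langle X \rangle = \{e\}$. Thus the quotient map $Z(G) \to Z(G)/G'$ is injective on $K$; in particular, $\bar b$ is $\mathbb F_p$-linearly independent inside $K$ and can be extended to an $\mathbb F_p$-basis $H$ of $K$. Then $\langle H \rangle = K$, and every $\mathbb F_p$-independent subset of $K$ is automatically linearly independent over $G'$, so $H$ has the required properties and $G=\langle X\rangle \times \langle H\rangle$. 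The only real obstacle is noticing that Remark \ref{rem: invariance of tranversal} already gives us the freedom to fix the transversal first and then pick the complementary central subgroup so that it contains $\bar b$; once this is in place, the argument reduces to elementary linear algebra over $\mathbb F_p$.
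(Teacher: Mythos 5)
Your argument is correct and follows the same route as the paper: the crucial step in both is applying Remark~\ref{rem: invariance of tranversal} to decouple the choice of the transversal $X$ extending $\bar a$ from the choice of the complementary central subgroup $K$ containing $\bar b$, after which Lemma~\ref{lem: transversal type def} and the description of $\Psi$ finish things off. You have merely made explicit the final linear-algebra step (extending $\bar b$ to a basis $H$ of $K$ and observing that $K\cap G'=\{e\}$ forces $H$ to be independent over $G'$) which the paper's very terse proof leaves implicit.
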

\begin{proof}
Combining this with Remark \ref{rem: invariance of tranversal}, there is a subgroup $K$ of $G$ containing $\bar b$ such that for any transversal $X$ of $G$, we have that $G=\langle X\rangle \times K$.
Combining this with Lemma \ref{lem: transversal type def}, we can conclude.

\end{proof}

%
%

\if 0

\section{Indiscernible sequences in Mekler's construction}\label{sec: Indiscernibles decomposition}
We will need the following classical result of Morley and Shelah on the existence of indiscernible subsequences in stable theories.
\begin{fact}
Let $T$ be a stable theory, and 
\end{fact}

We remark that no analog of this result holds for NIP theories without any set-theoretic assumptions (see ***).

\begin{prop}
Let $C$ be an infinite nice graph. Then $\Th(C)$    is  NIP if and only if $\Th(G(C))$ is NIP.
\end{prop}

\proof
As the graph $C$ is interpretable in any model of  $\Th(G(C))$, the implication from left to right is obviously true. 

To proof the reverse implication assume that the contrary, namely that there is a nice graph $C$ such that  $\Th(C)$ is NIP but  $\Th(G(C))$ has IP. By Fact \ref{factNIP}, we can find  an $\emptyset$-indiscernible sequence  $(a_i)_{i \in \kappa}$ of elements  in a saturated model $G$ of  $\Th(G(C))$ that witnesses IP.

Now, fix a transversal $X= X_{\nu} \cup X_p \cup X_{\iota}$. 
As every element is a product of finitely many elements in the transversal multiplied by an element of $H$, we may assume, modulo passing to a subsequence of the same length,  that there is  is natural numbers $n$, 
elements in the transversal $ (x_{ji})_{i <\kappa,\, j\leq n}$ and elements $(h_i)_{i < \kappa}$ in $H$ such that $ a_i = \Pi_{j=0}^{n-1} x_{ji}\cdot  h_i$. 

The first step is to pass to a subsequence and choose a priori different transversal in way that each $a_i$ can be written as a $n$-product of elements in the transversal (i.\ e.\ we can find a subsequence and a transversal for which all $h_i$'s are $1$).

As there are only three different types of elements, modulo passing to a subsequence of the same length, we may assume that for a fixed $0\leq j\leq n-1$, the elements $x_{ji}$ for $i<\kappa$ is of the same type, i.\ e.\ all are of type $1^\nu$, $p$, or $1^\iota$. Furthermore, for each coordinate, we may assume that the elements are all equal or all distinct and if for some $t$ and $s$ the sequence $(x_{ji})_{i< \kappa}$ constant and $(x_{ts})_{s< \kappa}$ is non constant, we have that $x_{is} \not= x_{j0}$.

Now, without loss of generality suppose that $ x_{0i}$'s are all distinct. Let 
$$\beta_\alpha = \min\{\beta : \exists 0< k <n,\ x_{\beta, \, k}= x_{\alpha, \, 0}\}. $$
 Consider the function $f: \kappa \rightarrow \kappa,$ given by
$$ f(\alpha)=\begin{cases}  \beta_\alpha \text{ if  } \beta_\alpha < \alpha   \\

0 \text{ otherwise} 
\end{cases}
$$
By Fodor's Lemma and the easily verifiable  fact that for some $\gamma$ different than $0$, the set $\{\alpha < \kappa: f(\alpha)= \gamma\}$ has size at most $n-1$, there is a stationary subset $S$ of $\kappa$ such that $f: S \rightarrow \kappa$ is constantly zero. As $S$ still has order type $\kappa$, we may assume that our sequence is the form $ (x_{0i}, \dots, x_{n-1,i})_{i <\kappa}$ such that for all $j<i<\kappa$ and $0<k<n$ we have that $x_{0i} \neq x_{kj}$. 
Now, we want to replace our set of transversal recursively on $i<\kappa$ in a way such that each $(a_i =\Pi_{j=0}^{n-1} y_{ji} )$ for some $y_{ji} $ in the transversal. Suppose you have succeeded for all ordinals less than $\alpha$. Now, let $m$ the size of the set $\{k: x_{\alpha, k} = x_{\alpha, 0}\}$ (i.\ e.\ the number of appearances of $x_{0\alpha}$ in the product which gives $a_\alpha$). Now replace $x_{\alpha,0}$ by $x_{0 \alpha}\cdot h_i^q$ with $q<p$ such that $qn\equiv 1\ (\text{mod } p)$. Then,
$$ a_\alpha = \Pi_{j=0}^{n-1} x_{j \alpha } .$$
Moreover, it is easy to check that this set remains a transversal. By the choice of the sequence $x_{ji}$'s, the element $x_{0 \alpha}$ does not appear in the product which gives $a_i$ for $i<\alpha$. Thus, for this new transversal such an $a_i$ remains equal to the product $\Pi_{j=0}^{n-1} x_{ji}$. Now, for each $\gamma>\alpha$ in which $x_{0 \alpha}$  makes an appearance in the product of $a_\gamma$, we can replace this element by $x_{0 \alpha}\cdot h_i^q$  and change $h_{\gamma}$ accordingly such that $ a_\gamma = \Pi_{j=0}^{n-1} x_{j \gamma }\cdot  h_{\gamma}$. This finishes our construction.

 Next we replace the sequences of $n$-tuples $\bar x_i$ by an indiscernible one. To do so, we first add the handles of each of the elements of type $p$ to the end of our sequence to insure that the tuples $\bar x_i$ are closed under handles. Note that these tuples will still have the same length. Now we can find an indiscernible sequence $(\bar y_i)_{i<\kappa}$ such that
$$ \EM( a_i, \bar x_i: i \in \kappa) \subset \tp(\bar a_i,\bar y_i: i \in \kappa). $$
Note that $ a_i $ remains to be equal to $  \Pi_{j=0}^{n-1} y_{ji}$ and that $\bigcup_{i\in \kappa} \bar y_i$ can be completed to a transversal $Y$ of $G$.

As the original sequence witnesses IP, we can find a $m$-tuple  $\bar g= (g_0, \dots, g_{m-1})$ of elements in $G$ such that $\tp(a_i / \bar g)$ alternates. Choose elements $(z_{ij})_{i<m, j<n_m}$ in the transversal and elements $(k_i)_{i<m}$ in $H$ such that $g_i = \Pi_{j=1}^{n_i} z_{ij}\cdot k_i$. Again, we may prolong the tuples $\bar z_i = (z_{i0}, \dots, z_{i,n_m-1})$ to include all handles of elements of type $p$. After possibly cutting an initial segment of our sequence, we may assume that if $(y_{ij})_{j< \kappa}$ is non constant, then non of these elements coincide with one of the $z_{ij}$'s.
 
Now let $Y_i= Y_{\nu}^{(i)} \cup Y_{p}^{(i)} \cup Y_{\iota}^{(i)}$ where $Y_{\nu}^{(i)}$, $Y_{p}^{(i)}$, and $Y_{\iota}^{(i)}$ are the elements of type $1\nu$, $p$, and $1^\iota$ respectively in the tuple $\bar y_i$. Moreover, let $Z= Z_{\nu} \cup Z_p \cup Z_{\iota}$ be elements of type $1\nu$, $p$, and $1^\iota$ respectively in the set $\{z_{ij}: i< m, j<n_m\}$. We have that $Y_{\nu}^{(i)}$ are an indiscernible sequence in $\Th(C)$.  Thus, as $\Th(C)$ is NIP, we have that $\tp(Y_\nu^i)$ stabilizes over $Z_\nu$, in particular, we may assume that $Y_\nu^i$ is indiscernible over $Z_\nu$.

and a finite subset $Z =Z_{\nu} \cup Z_p \cup Z_{\iota}$ of $Y$ such that $g= s(Z_{\nu} , Z_p, Z_{\iota}, h)$. Assume again that $Z$ is closed under handles.  As $\Th(C)$ is NIP  and $Y^i$ are indiscernible, we have that $\tp(Y_\nu^i)$ stabilizes over $Z_\nu$, in particular, we may assume that $Y_\nu^{(i)}$ is indiscernible over $Z_\nu$. Moreover, by indiscernibility  of $Y^{(i)}$ we have that for all $i, j$ in $\kappa$,
$$Y_p^{(i)} \cap Z_p = Y_p^{(j)} \cap Z_p \ \ \ \  \mbox{ and } \ \ \ \ Y_\iota^{(i)} \cap Z_\iota = Y_\iota^{(j)} \cap Z_\iota.$$
Putting all together, we obtain that for any two sequences $i_0 < i_1 < \dots < i_l,$ and $j_0 < j_1 < \dots < j_l$ in $\kappa$ we can find a bijection $\sigma$ between  $(Y^{i_0},  \dots, Y^{i_l})$ and $(Y^{j_0},  \dots, Y^{j_l})$ such that 
\begin{itemize}
\item $\tp_\Gamma (Y_\nu^{i_0},  \dots, Y^{i_l}, Z_\nu) = \tp_\Gamma (\sigma(Y^{i_0},  \dots, \sigma(Y^{i_l}), Z_\nu) $.
\item the map $\sigma$ fixes $Z$.
\item  the map $\sigma$ respects the $1^{\nu}$-, $p$-, and $1^{\iota}$-parts and the handles;

\end{itemize}
Thus by Fact \ref{Fact_Bau} we have that $$\tp (Y^{i_0},  \dots, Y^{i_l}, Z_\nu) = \tp (\sigma(Y^{i_0}),  \dots, \sigma(Y^{i_l}), Z)  = \tp (Y^{j_0},  \dots, Y^{j_l}, Z) .$$
or in other words there is an automorphism $f$ fixing $Z$ mapping $(Y^{i_0},  \dots, Y^{i_l})$ to  $(Y^{j_0},  \dots, Y^{j_l})$. Furthermore, we may assume that this automorphism fixes $H$.
This implies that the sequence $a_i =   \Pi_{j=0}^{n-1} y_{ji}$ is indiscernible over $\bar g$ contradicting the initial assumption.

\qed

Let $G$ be a saturated model of $\Th(G(C))$.

\begin{defn}
	Let $g$ be an element of $G$. For a given transversal $X$ of $G$ and the corresponding subgroup $H$ of $G$ such that $G= \langle X \rangle \times H$ and $(x_1, \dots, x_n)$ in $X$ and $h$ in $H$ such that $g=\Pi_{i=1}^n x_i \cdot h$ we call $x_i$ the \emph{$i$th coordinate} of $g$.
	\end{defn}
	
	\begin{lemma}\label{Lem_IndStep}
		Let 
		$$ (\bar a_{\alpha}= (a_{\alpha}^0, \dots, a_{\alpha}^{d}), b_\alpha, (x_{\alpha}^{0})_0, \dots, (x_{\alpha}^0)_{m_0},	\dots  (x_{\alpha}^{d})_0, \dots, (x_{\alpha}^{d})_{m_d}): \alpha <2^\kappa) $$ 
		be an indiscernible sequence in $G$  such that the collection 
		$$X_0 = \{  (x_{\alpha}^i)_j ; \alpha< 2^\kappa, i < d, j <m_i\}$$
		 is a part of a transversal and  for $i\leq d$ we have that
		$$a_\alpha^i = \Pi_{j=0}^{n-1} (x_{\alpha}^i)_j. $$

		Suppose there is a transversal $Y$ extending $X_0$ and an a natural number $m$ such that
	for the sequence	$b_{\alpha}  = \Pi_{j=0}^{m} (y_{\alpha})_j \cdot h_{\alpha}$
		at least one of the coordinates is pairwise different, say the $r$ one, and moreover for each $\alpha$ this coordinate differs from all the coordinates of the elements $a_{\alpha}^1, \dots, a_{\alpha}^{d}$, i.\ e.\  $(y_{\alpha})_r$ is different from any $(x_{\alpha}^{i})_r$ for $0\leq i <d$ and $1\leq j \leq m_i$.

	Then there is a subsequence $( b_\alpha)_{\alpha \in I}$ of $ ( b_{\alpha})_{\alpha <2^\kappa}$ of length at least $\kappa$, a transversal $X$ extending $X_0$ and $(x_{\alpha})_0, \dots, (x_{\alpha})_{m}$ in $X$ such that 

	$$b_\alpha = \Pi_{j=0}^{m} (x_{\alpha})_j,$$
	
	and 
		$$ (\bar a_{\alpha},  b_\alpha,[(x_{\alpha}^{0})_0, \dots, (x_{\alpha}^0)_{m_0}],\dots  	 [(x_{\alpha}^{d})_0,  \dots, (x_{\alpha}^{d})_{m_d}], [(x_{\alpha})_0, \dots, (x_{\alpha})_{m} )]: \alpha \in I) $$ 
		is still an indiscernible sequence in $G$.

		\end{lemma}
\proof
As there are only 3 different types of elements in the transversal, up to passing to a subsequence and using the pigeonhole principle we may assume that for each $\alpha \in 2^\kappa $ the $i$th coordinate of each $b_\alpha$ is of the same type.

Secondly using Erdos Rado, there is a subsequence  index by $I$ of length at least $\kappa$ such that  for each pair of coordinates, we may assume that uniformly equal in the following sense: Given $i\leq d$, $k\leq m_i$, and $s\neq t  \leq m$ then
\begin{eqnarray*}
\mbox{either} & (x_{\alpha}^{i})_k = (y_\beta)_s \mbox { for all  }  \alpha \neq \beta \mbox{ in } I\\
\mbox{or} & (x_{\alpha}^{i})_k \neq (y_\beta)_s \mbox { for all  }  \alpha \neq \beta \mbox{ in } I
\end{eqnarray*}
and
\begin{eqnarray*}
\mbox{either} & (x_{\alpha})_t= (y_\beta)_s \mbox { for all  }  \alpha \neq \beta \mbox{ in } I\\
\mbox{or} & (x_{\alpha})_t \neq (y_\beta)_s \mbox { for all  }  \alpha \neq \beta \mbox{ in } I
\end{eqnarray*}
and for all $\alpha \in I$
\begin{eqnarray*}
\mbox{either} &  (y_{\alpha})_t= (y_\alpha)_s \\
\mbox{or} & (y_{\alpha})_t \neq (y_\alpha)_s 
\end{eqnarray*}
This implies moreover, that if for a given $\alpha \in I$ and $j\leq m$, the coordinate $(y_\alpha)_t$ belongs to 
$$X_0 \setminus \{ (x_{\alpha}^{i})_k: 0\leq i \leq d, 0\leq k \leq m_i \},$$
then this coordinate has to be constant.
Now, consider the $r$ coordinate of $b_\alpha$ which is by assumption pairwise different and does not coincide with any of the $(x_{\alpha}^{i})_k$. By the above, we have that for all $\alpha \in I$, the element  $(y_\alpha)_r$ does not belong to $X_0$. Moreover there is a natural number $\ell$ such that $\ell$ many of the coordinates of $b_\alpha$ coincides with $(y_\alpha)_r$. 

Next, replace $ (y_{\alpha})_r$ by $ (y_{\alpha})_r \cdot (h_{\alpha})^q$  with $q<p$ such that $q\ell \equiv 1\ (\text{mod } p)$ in the transversal $Y$. By Lemma \ref{Lem_RemTrans} this remains a transversal. Then,
$$ b_\alpha = \Pi_{j=0}^{m} (y_\alpha)_j  $$
in the transversal $X$. Furthermore, as we haven't replaced any of the elements in $X_0$, we have still that  each $a_{\alpha}^{i}$ is equal to $\Pi_{j=1}^{m_1} (x_{\alpha}^{i})_j$. Now we can find elements $(\bar x_\alpha)_{\alpha \in I}$ such that
\begin{eqnarray*}
	 & \EM( (\bar a_{\alpha},  b_\alpha,[(x_{\alpha}^{0})_0, \dots, (x_{\alpha}^0)_{m_0}],\dots  	 [(x_{\alpha}^{d})_0,  \dots, (x_{\alpha}^{d})_{m_d}], [(y_{\alpha})_0, \dots, (y_{\alpha})_{m} ]): \alpha \in I)  \\
	 \subset& \tp (\bar a_{\alpha},  b_\alpha,[(x_{\alpha}^{0})_0, \dots, (x_{\alpha}^0)_{m_0}],\dots  	 [(x_{\alpha}^{d})_0,  \dots, (x_{\alpha}^{d})_{m_d}], [(x_{\alpha})_0, \dots, (x_{\alpha})_{m} )]: \alpha \in I)  . 
	 \end{eqnarray*}
and 
$$(\bar a_{\alpha},  b_\alpha,[(x_{\alpha}^{0})_0, \dots, (x_{\alpha}^0)_{m_0}],\dots  	 [(x_{\alpha}^{d})_0,  \dots, (x_{\alpha}^{d})_{m_d}], [(x_{\alpha})_0, \dots, (x_{\alpha})_{m} )]$$
is moreover indiscernible. Note that $ X_1 = X_0 \cup \{(x_{\alpha})_j: \alpha \in \kappa, 0\leq j\leq m\}$ can be completed to a transversal $X$ of $G$ and that 
$$b_\alpha = \Pi_{j=0}^{m} (x_{\alpha})_j.$$
This completes the proof.
\qed 

\begin{lemma}\label{Lem_a->X}
Let $(\bar a_\alpha: \alpha \in \lambda)$ is an indiscernible sequence and let $\ell$ be $|\bar a_\alpha|$. Suppose that  $\lambda \geq \operatorname{exp}_\ell(\kappa)$. 
Then there is 
\begin{itemize}
	\item a subset $I$ of $\lambda$ of size at least $\kappa$;
	\item a transversal $X =X^\nu\cup X^p\cup X^\iota $;
	\item  a finite tuple of elements $\bar c$ in $X$;
	\item an indiscernible sequence $(X^\nu_\alpha, X^p_\alpha, X^\iota_\alpha, \bar h_\alpha: \alpha \in I)$ where $ X^\nu_\alpha\subset  X^\nu$, $ X^p_\alpha\subset  X^p$, $ X^\iota_\alpha\subset  X^\iota$, $\bar h_\alpha$ is a tuple in $H_X$, and $X^\nu_\alpha$ contains all handles of elements in $X^p_\alpha$;
	
\item terms $t_0(\bar z), \dots, t_{\ell}(\bar z)$;
\end{itemize}
 such that 
$$ a_\alpha^i= t_i (X^\nu_\alpha, X^p_\alpha, X^\iota_\alpha, \bar h_\alpha, \bar c)$$
\end{lemma}
\proof
Up to passing to a subsequence and permuting the tuple, using inductively Lemma \ref{Lem_IndStep} we may assume that there is a transversal $X$ and $t\leq d$ such that for every $k\leq t$, there is a natural number $m_k$ and elements $\{(x_{\alpha}^{k})_{j})\}_{\alpha\in \kappa, 1\leq j\leq m_k}$ in $X$ such that 

\begin{itemize}
\item $a_\alpha^k = \Pi_{j=0}^{m_k}(x_{\alpha}^{k})_{j}$;
\item $(\bar a_\alpha, [(x_{\alpha}^{0})_0, \dots, (x_{\alpha}^{0})_{m_0}],\dots,  [(x_{\alpha}^{t})_0, \dots, (x_{\alpha}^{t})_{m_t}])$ is indiscernible;
\item for $k> t$, we have that for all $\alpha$
$a_{\alpha}^{k}  = \Pi_{j=1}^{m_k} (x_{\alpha}^{k})_j \cdot h_{\alpha}^{k}$
 with $ (x_{\alpha}^{k})_j \in X$ and  $h_{\alpha}^{k} \in H_X$, and for each coordinate $j$ which is pairwise different, this coordinate is equal to one of the coordinates of $a_\alpha^s$ for some $s \leq t$.
\end{itemize}

Now, for each coordinate sequence of the sequence $a_\alpha^k$ with $k>t$  that is constant, we add this element to the tuple $\bar c$. 

As the theory of $H_X$ is stable, there is $I$ of size $\kappa$ such that for every $t\leq k\leq d$, the sequence $(h_\alpha^k)_{\alpha \in I}$ is indiscernible in the sense of $H_X$. Now, we may add the sequence  $(\bar h_\alpha = (h_\alpha^{t+1}, \dots, h_\alpha^{d}))$ to the above sequence obtain 
$$(\bar a_\alpha, [(x_{\alpha}^{0})_0, \dots, (x_{\alpha}^{0})_{m_0}],\dots,  [(x_{\alpha}^{t})_0, \dots, (x_{\alpha}^{t})_{m_t}], \bar h_\alpha)$$
 As we can clue any automorphism of $H_X$ with any automorphism on $\langle X \rangle$ to obtain an automorphism of $G$, this sequence remains indiscernible.
 
Now, let $X^\nu_\alpha$ be the elements of type $1^\nu$, let $X^p$ be the elements of type $p$, and let $ X^\iota$ be the elements of type $1^\iota$ in the set $\{ x_{\alpha}^{k})_i: 0\leq i \leq d, 0\leq k \leq m_i\}$. 
As the handle of an element $x$ of type $p$ is definable over $x$, we may add these to the set $X^\nu_\alpha$ and assume our set is closed under handles. Since for each $i\leq d$, the element $a_\alpha^i$ is a product of $m_i$ many elements and possibly $h_\alpha^i$, we can fix a term $t_i$ such that 
$$a_\alpha^i= t_i (X^\nu_\alpha, X^p_\alpha, X^\iota_\alpha, \bar h_\alpha, \bar c)$$
 and we can conclude.
\qed

\proof[Proof with tuples 1. try]
Suppose that $\phi(x; \bar y)$ has IP, let $\ell$ be $|\bar y|$ and $\lambda \geq \operatorname{exp}_\ell(\kappa)$. Choose an indiscernible sequence $(\bar a_\alpha = a_{\alpha}^1, \dots, a_{\alpha}^d)_{\alpha \in \lambda}$ witnessing it. By Lemma \ref{Lem_a->X} we can find 
\begin{itemize}
	\item a subset $I$ of $\lambda$ of size at least $\kappa$;
	\item a transversal $X =X^\nu\cup X^p\cup X^\iota $;
	\item  a finite tuple of elements $\bar c$ in $X$;
	\item an indiscernible sequence $(X^\nu_\alpha, X^p_\alpha, X^\iota_\alpha, \bar h_\alpha: \alpha \in I)$ where $ X^\nu_\alpha\subset  X^\nu$, $ X^p_\alpha\subset  X^p$, $ X^\iota_\alpha\subset  X^\iota$, $\bar h_\alpha$ is a tuple in $H_X$, and $X^\nu_\alpha$ contains all handles of elements in $X^p_\alpha$;
\item terms $t_0(\bar z), \dots, t_{\ell}(\bar z)$;
\end{itemize}
 such that
$$ a_\alpha^i= t_i (X^\nu_\alpha, X^p_\alpha, X^\iota_\alpha, \bar h_\alpha, \bar c)$$
Now let $\psi(x, \bar z, \bar y)$ be the formula such that
$$ \psi(x, \bar c, X^\nu_\alpha, X^p_\alpha, X^\iota_\alpha, \bar h_\alpha) \leftrightarrow \phi(x; t_0 (X^\nu_\alpha, X^p_\alpha, X^\iota_\alpha, \bar h_\alpha, \bar c), \dots , t_\ell (X^\nu_\alpha, X^p_\alpha, X^\iota_\alpha, \bar h_\alpha, \bar c) ).$$

As the sequence witnesses IP of the formula $\phi(x; \bar y)$, we can choose $g$ in $G$ such that
$$\models \phi(g; t_0 (X^\nu_\alpha, X^p_\alpha, X^\iota_\alpha, \bar h_\alpha, \bar c), \dots , t_\ell (X^\nu_\alpha, X^p_\alpha, X^\iota_\alpha, \bar h_\alpha, \bar c) ) \mbox{ if and only if $i$ is odd }$$
Thus 
$$\models \psi(g, \bar c; X^\nu_\alpha, X^p_\alpha, X^\iota_\alpha, \bar h_\alpha ) \mbox{ if and only if $i$ is odd }.$$

Choose elements $(z_i)_{i<m}$ in the transversal $X$ and an element $k$ in $H$ such that $g = \Pi_{j=1}^{m} z_i \cdot k$. Again, we may prolong the tuples $\bar z$ to include all handles of elements of type $p$. After possibly cutting an initial segment of our sequence, we may assume that if $((x_\alpha^k)_j)_{\alpha < \kappa}$ is non constant, then non of these elements coincide with one of the $z_i$'s.
 
Now, let $Z= Z^{\nu} \cup Z^p \cup Z^{\iota}$ be elements of type $1^\nu$, $p$, and $1^\iota$ respectively in the set $\{z_i: i< m\} \cup \bar c$. We have that $(X^{\nu}_{\alpha})_{\alpha < \kappa}$ is an indiscernible sequence in $\Th(C)$ (remark that this is true because $C$ is interpretable in $G$ using $X^\nu$).  
Thus, as $\Th(C)$ is NIP, we have that $\tp(X^{\nu}_{\alpha})$ stabilizes over $Z^\nu$, in particular, we may assume that $X^{\nu}_{\alpha}$ is indiscernible over $Z^\nu$. Moreover, by indiscernibility  of $ X^p_\alpha, X^\iota_\alpha$ we have that for all $\alpha, \beta$ in $\kappa$,
$$X^p_\alpha \cap Z^p = X^p_\beta \cap Z^p \ \ \ \  \mbox{ and } \ \ \ \ X^\iota_\alpha \cap Z^\iota = X^\iota_\beta \cap Z^\iota.$$
Putting all together, we obtain that for any two sequences $\alpha_0 < \alpha_1 < \dots < \alpha_l,$ and $\beta_0 < \beta_1 < \dots < \beta_l$ in $\kappa$ we can find a bijection $\sigma$ between  $(X_{\alpha_0},  \dots, X_{\alpha_l})$ and $(X_{\beta_0},  \dots, X_{\beta_l})$ such that 
\begin{itemize}
\item $\tp_\Gamma (X_{\alpha_0},  \dots, X_{\alpha_l}, Z_\nu) = \tp_\Gamma (\sigma(X_{\beta_0}),  \dots, \sigma(X_{\beta_l}), Z_\nu) $.
\item the map $\sigma$ fixes $Z$.
\item  the map $\sigma$ respects the $1^{\nu}$-, $p$-, and $1^{\iota}$-parts and the handles;

\end{itemize}
Thus by Fact \ref{Fact_Bau} we have that $$\tp (X_{\alpha_0},  \dots, X_{\alpha_l}, Z^\nu) = \tp (\sigma(X_{\alpha_0}),  \dots, \sigma(X_{\alpha_l}), Z \cap X^\nu)  = \tp (X_{\beta_0},  \dots, X_{\beta_l}, Z) .$$
or in other words there is an automorphism $\rho$ fixing $Z$ mapping $(X_{\alpha_0},  \dots, X_{\alpha_l})$ to  $(X_{\beta_0},  \dots, X_{\beta_l})$. 

Furthermore, by indiscernibility of $(h_\alpha)$, there is also an automorphism $\tau $ of $H$ mapping $(h_{\alpha_0}, \dots, h_{\alpha_l})$ to $(h_{\beta_0}, \dots, h_{\beta_l})$. Now the automorphism $\rho \times \tau$ gives us the desired automorphism of $G$.
This 

\qed

\fi

\section{Preservation of NIP}\label{sec: NIP}
We begin with the simplest case demonstrating that NIP is preserved. Recall the following basic characterization of NIP.

\begin{fact} (see e.g. \cite{adler2008introduction}) \label{fac: char of NIP}
Let $T$ be a complete first-order theory and let $\mathbb{M} \models T$ be a monster model. Let $\kappa$ be the regular cardinal $ |T|^+$. Then the following are equivalent.
\begin{enumerate}
\item $T$ is NIP.
\item For every indiscernible sequence $I = (\bar a_i : i\in \kappa)$ of finite tuples and a finite tuple $\bar b$ in $\mathbb{M}$, there is some $\alpha < \kappa$ such that $\tp(\bar b \bar a_i) = \tp(\bar b \bar a_j)$ for all $i,j > \alpha$.
\end{enumerate}
\end{fact}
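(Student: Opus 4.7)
The plan is to prove the equivalence via the classical characterization of NIP by finite alternation on indiscernible sequences: a formula $\phi(\bar x, \bar y)$ is NIP if and only if, for every infinite indiscernible sequence $(\bar a_i)_{i < \omega}$ and every parameter $\bar b$, the truth value of $\phi(\bar b, \bar a_i)$ changes only finitely many times along $i$. This auxiliary fact is immediate from the definition of IP combined with Ramsey's theorem: an infinite alternating pattern on an indiscernible sequence would, by extraction, produce a subsequence witnessing IP for $\phi$. I would take this as a preliminary step.

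For the direction $(1) \Rightarrow (2)$, fix $I = (\bar a_i : i < \kappa)$ and a finite tuple $\bar b$. For each formula $\phi(\bar x, \bar y)$ with $|\bar x| = |\bar b|$ and $|\bar y| = |\bar a_i|$, the preliminary step gives some $\alpha_\phi < \omega$ past which $\phi(\bar b, \bar a_i)$ has constant truth value. There are at most $|T|$ such formulas, so by regularity of $\kappa = |T|^+$ the supremum $\alpha := \sup_\phi \alpha_\phi$ satisfies $\alpha < \kappa$. Combined with indiscernibility of $I$ (which gives $\tp(\bar a_i) = \tp(\bar a_j)$ for all $i,j$), this yields $\tp(\bar b \bar a_i) = \tp(\bar b \bar a_j)$ for all $i,j > \alpha$.

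For $(2) \Rightarrow (1)$, I would argue the contrapositive. If $T$ has IP, pick a formula $\phi(\bar x, \bar y)$ witnessing it; by Ramsey and compactness we obtain an indiscernible sequence $(\bar a_i : i < \kappa)$ and a parameter $\bar b$ such that $\models \phi(\bar b, \bar a_i)$ holds if and only if $i$ is even. Then $\tp(\bar b \bar a_i)$ alternates cofinally in $\kappa$, directly contradicting (2).

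The whole argument is entirely standard, so I do not expect any serious obstacle. The only point that deserves attention is the cardinal bookkeeping in $(1) \Rightarrow (2)$: the choice $\kappa = |T|^+$ is precisely what is needed so that the finitely-many-alternations bounds, indexed by up to $|T|$ formulas, aggregate into a single stabilization ordinal below $\kappa$ via regularity.
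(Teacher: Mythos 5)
The paper does not prove this statement itself; it is quoted as a Fact and cited to Adler's survey, so there is no ``paper's proof'' to compare against. Your argument is the standard one and is essentially correct: reduce to finite alternation of a single NIP formula on indiscernible sequences, aggregate over the $\leq |T|$ relevant formulas using regularity of $\kappa = |T|^+$, and prove the converse by extracting indiscernibles from an IP-witnessing family and picking $\bar b$ to realize an alternating pattern.

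There is one slip in the direction $(1)\Rightarrow(2)$: you write that the preliminary step gives ``$\alpha_\phi < \omega$.'' The finite-alternation lemma bounds the \emph{number of sign changes} of $\phi(\bar b, \bar a_i)$ by some finite $n_\phi$, but it does not force the last change to occur at a finite position. For a $\kappa$-indexed indiscernible sequence, one can only conclude that there is a stabilization ordinal $\alpha_\phi < \kappa$. (Already in DLO, with $(a_i : i < \kappa)$ increasing and $b$ chosen in the cut between $\{a_i : i < \omega_1\}$ and $\{a_i : i \geq \omega_1\}$, the formula $x>y$ has its unique alternation at position $\omega_1$.) Your own appeal to regularity of $\kappa$ in the next sentence in fact presupposes the correct bound $\alpha_\phi < \kappa$ rather than $< \omega$, so this is a local misstatement rather than a gap in the idea. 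A secondary point, implicit in your write-up, is worth making explicit: the finite-alternation lemma as you phrase it is about $\omega$-indexed sequences; to apply it along a $\kappa$-sequence, note that an infinite alternation would restrict to an infinite alternation on some $\omega$-subsequence, which remains indiscernible, contradicting the lemma.
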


As in Section \ref{sec: Mekler},  let $C$ be a nice graph and let $G(C)$ be the $2$-nilpotent group of exponent $p$ which is freely generated (in the variety of 2-nilpotent groups) by the vertices of $C$ by imposing that two generators commute if and only if they are connected by an edge in $C$.

\begin{theorem} \label{thm: NIP}
$\Th(C)$ is NIP if and only if $\Th(G(C))$ is NIP.
\end{theorem}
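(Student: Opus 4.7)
The direction $(\Rightarrow)$ is immediate: the graph $C$ (up to elementary equivalence) is interpretable in any model $G \models \Th(G(C))$ via $\Gamma(G)$, so IP of $\Th(C)$ is transferred to $\Th(G(C))$. The content is the converse. My plan is to contrapose: assume $\Th(G(C))$ has IP and produce an indiscernible sequence in $C$ witnessing IP of $\Th(C)$. Using Fact \ref{fac: char of NIP}, fix an indiscernible sequence $I = (a_i : i < \kappa)$ of finite tuples and a finite tuple $\bar b$ in a saturated $G \models \Th(G(C))$ for which $\tp(\bar b / a_i)$ is not eventually constant. Our goal is to derive a contradiction by producing an automorphism of $G$ fixing $\bar b$ and sending $a_i$ to $a_j$ for appropriate $i<j$.

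The first step is to decompose everything through a transversal. Fix a transversal $X = X^\nu \sqcup X^p \sqcup X^\iota$ of $G$ and a subgroup $K_X \leq Z(G)$ with $G = \langle X \rangle \times K_X$ (Fact \ref{fac: decomposition}(1)). Each coordinate of each $a_i$ is a product of finitely many elements of $X$ times an element of $K_X$. By passing to a subsequence and using the pigeonhole principle, we can arrange that the number of transversal factors is uniform, the types ($1^\nu$, $p$, $1^\iota$) of the factors in each coordinate position are uniform, and for each pair of positions the factors are either always equal along the sequence or always pairwise distinct. Writing $a_i = \prod_{j<n} x_{i,j} \cdot h_i$ with $x_{i,j} \in X$ and $h_i \in K_X$, we also add the handles of all type-$p$ coordinates to the tuple, so that the decomposition is closed under handles.

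The key technical move is to absorb the central factors $h_i$ into the transversal. After a further subsequence extraction (using a Fodor-style argument to ensure that some coordinate, say position $r$, takes values pairwise distinct across $i$ and not equal to any value appearing in other coordinates), we replace $x_{i,r}$ by $x_{i,r} \cdot h_i^q$ where $q$ is chosen so that $q \cdot (\text{multiplicity of } x_{i,r} \text{ in the product}) \equiv 1 \pmod p$. One checks via Proposition \ref{prop: type def} that this modification still yields a transversal $X'$, after which $a_i$ becomes a pure product of transversal elements. Extracting an indiscernible sequence with the same EM-type on the enlarged tuples $(a_i, \bar x_i)$ (where $\bar x_i$ lists the transversal factors, closed under handles), and performing the analogous decomposition on $\bar b = \prod_{k<m} z_k \cdot h$, we are reduced to an indiscernible sequence of transversal tuples and a finite parameter $Z \cup \{h\}$ also drawn from $X \cup K_X$.

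Finally, we exploit NIP of $\Th(C)$: the $X^\nu$-components of $\bar x_i$ form an indiscernible sequence in the interpreted graph $\Gamma(G)$, hence by NIP of $\Th(C)$ eventually have constant type over $Z^\nu$. The $X^p$- and $X^\iota$-components are coded up to $\sim$-equivalence and handles, and the indiscernibility together with uniform equality/inequality already forces their intersection pattern with $Z$ to be eventually constant. The $h_i$'s lie in $K_X$, whose theory is a stable elementary abelian $p$-group (Fact \ref{fac: decomposition}(2)), so passing to a further subsequence makes them indiscernible over $h$. Applying Lemma \ref{Lem_GlueAut} for sufficiently large $i<j$, we glue a graph automorphism sending $\bar x_i^\nu \mapsto \bar x_j^\nu$ fixing $Z^\nu$, a bijection on the transversal respecting types and handles, and an automorphism of $K_X$ sending $h_i \mapsto h_j$ fixing $h$, into a single automorphism of $G$ fixing $\bar b$ and sending $a_i$ to $a_j$. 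This contradicts the choice of $\bar b$ and completes the proof. The main obstacle is the third paragraph: cleanly absorbing the central tails $h_i$ into the transversal while keeping the sequence indiscernible and the modified set a transversal.
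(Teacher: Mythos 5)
Your proposed route is essentially the paper's first attempt, which the authors ultimately abandoned in favour of a cleaner argument (the attempt survives as a commented-out block in the source). Both proofs use the same ingredients --- decomposition via a transversal, indiscernible extraction, NIP of $\Th(C)$ applied to the $1^\nu$-components in $\Gamma(G)$, stability of the central complement, and Lemma \ref{Lem_GlueAut} --- but the published proof makes one decisive simplification. Instead of writing $G = \langle X\rangle \times K_X$ and then trying to ``absorb'' the central factors $h_i$ into the transversal by a Fodor argument and explicit replacement of transversal elements, it records the complement as a set $H$ of linearly independent central elements with $G = \langle X\rangle \times \langle H\rangle$ (via Remark \ref{rem: invariance of tranversal}). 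Proposition \ref{prop: type def} then makes the conjunction ``$\bar{x}$ extends to a transversal and $\bar{h}$ extends to an independent set of central generators of a complement'' a type-definable condition on the tuple $\bar{x}^{\frown}\bar{h}$, so extracting an indiscernible sequence with the same EM-type automatically preserves the whole decomposition. No absorption, no Fodor, no modification of transversals, and no separate handling of constant or degenerate coordinates.

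Your sketch also has two concrete gaps. First, the claim that replacing $x_{i,r}$ by $x_{i,r}\cdot h_i^q$ ``still yields a transversal $X'$, by Proposition \ref{prop: type def}'' misuses that proposition: it characterizes which tuples extend to a transversal-plus-complement, but does not assert that multiplying a transversal element by a central element of the complement produces another transversal. That is a separate claim (and was a separate, discarded lemma in the paper's draft); one must check directly that the modified element remains proper, that the $\sim$- and $\approx$-class conditions survive, and that the independence requirements in Definition \ref{def: transversal} are preserved. Second, the order of operations is off: you fix $\bar{b}$ via Fact \ref{fac: char of NIP} at the outset and only afterwards extract an indiscernible sequence with the same EM-type from the enlarged tuples. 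After extraction the new sequence is generally a different sequence, and your fixed $\bar{b}$ no longer witnesses non-stabilization over it. The paper avoids this by using the shattering formulation of IP throughout: the initial sequence need not be indiscernible, shattering is an EM-type property preserved under extraction, and $\bar{b}$ together with its decomposition $s(\bar{z},\bar{k})$ is chosen only after the extraction. Your version can be repaired by conjugating $\bar{b}$ along an automorphism carrying the original indiscernible sequence to the extracted one (both are indiscernible with the same EM-type, hence conjugate in the saturated model), but this step must be made explicit.
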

\begin{proof}
If $\Th(G(C))$ is NIP, then $\Th(C)$ is also NIP as $C$ is interpretable in $G(C)$. 

Now, we want to prove the converse. Let $G \models \Th(G(C))$ be a saturated model, and assume that $\Th(G(C))$ has IP but $\Th(C)$ is NIP. Fix $\kappa$ to be $ (\aleph_0)^+$. Then there is some formula $\phi(\bar{x},\bar{y}) \in L_G$, and a sequence $I = (\bar{a}_i : i \in \kappa)$ in $G$ shattered by $\phi(\bar{x},\bar{y})$, i.e. such that for every $S \subseteq \kappa$, there is some $\bar{b}_S$ in $G$ satisfying $G \models \phi(\bar{b}_S, \bar{a}_i)$ if and only if $ i \in S$.

Let $X$ be a transversal for $G$ and $H \subseteq Z(G)$ a set of elements linearly independent over $G'$ and such that $G = \langle X \rangle \times \langle H \rangle$. Then for each $i \in \kappa$ we have, slightly abusing notation, $\bar{a}_i = t_i (\bar{x}_i , \bar{h}_i)$ for some $L_G$-term $t_i$ and some finite tuples $\bar{x}_i= \bar{x}_i^{\nu \frown} \bar{x}_i^{p \frown} \bar{x}_i^{\iota}$ from $X$  where $\bar{x}_i^\nu, \bar{x}_i^p, \bar{x}_i^\iota$ list all of the elements  of type $1^\nu, p, 1^\iota$ in $\bar{x}_i$, respectively, and $\bar{h}_i$ from $H$. After adding some elements of type $1^\nu$ to the beginning of the tuple and changing the term $t_i$ accordingly, we may assume that for each $i\in \kappa$ and $j< |\bar x_i^p|$, the handle of the j-$th$ element of  $\bar{x}_i^p$ is the $j$-th element of  $\bar{x}_i^\nu$ (there might be some repetitions of elements of type $1^\nu$ as different elements of type $p$ might have the same handle). As $\kappa > |L_G| + \aleph_0$, passing to a cofinal subsequence and reordering the tuples if necessary, we may assume that:
\begin{enumerate}
\item $t_i = t \in L_G$ and $|\bar{x}_i|$ and $ |\bar{h}_i|$ are constant for all $i \in \kappa$,
\item 
$|\bar{x}_i^\nu|, |\bar{x}_i^p|, |\bar{x}_i^\iota|$ are constant for all $i \in \kappa$.
\end{enumerate}

Consider the $L_G$-formula $\phi'(\bar{x},\bar{y}') = \phi(\bar{x}; t(\bar{y}_1 , \bar{y}_2))$ with $\bar{y}' := \bar{y}_1^{\frown} \bar{y}_2$ and $|\bar y_1|= |\bar x_i|$ and $|\bar y_2|= |\bar h_i|$. Let $\bar{a}'_i := \bar{x}_i^{\frown}\bar{h}_i$. Then the sequence $I' := (\bar{a}'_i : i \in \kappa)$ is  shattered by $\phi'(\bar{x},\bar{y}')$. Note however that $I'$ is generally not indiscernible. 

To fix this, let $J = ((\bar{x}'_i)^\frown \bar{h}'_i  : i \in \kappa)$ be an $L_G$-indiscernible sequence of tuples in $G$ with the same EM-type as $I'$. Then we have:

\begin{enumerate}
\item $J$ is still shattered by $\phi'(\bar{x}, \bar{y}')$,
\item for each $i\in \kappa$ and $j< |x_i^p|$, we have that the handle of the $j$-th element of  $(\bar{x}'_i)^p$ is the $j$-th element of  $(\bar{x}'_i)^\nu$ (since being a handle is a definable condition, see Definition \ref{fact: handle}, and the corresponding property was true on all elements in $I'$).
\item The set of all elements of $G$ appearing in the sequence $(\bar{x}'_i : i \in \kappa)$ still can be extended to some transversal $X'$ of $G$.
\item The set of all elements of $G$ appearing in the sequence $(\bar{h}'_i : i \in \kappa)$ can be extended to some set $H' \subseteq Z(G)$ linearly independent over $G'$ and such that $G = \langle X' \rangle \times \langle H' \rangle $.
\end{enumerate}

The last two conditions hold as the sets of all elements appearing in the sequences $(\bar{x}_i : i\in \kappa)$ and $(\bar{h}_i : i \in \kappa)$ satisfied the respective conditions, these conditions are type-definable by Proposition \ref{prop: type def} and $J$ has the same EM-type as $I'$.

Now let $\bar{b} \in G$ be such that both sets $\{i \in \kappa: G \models \phi'(\bar{b}, \bar{a}'_i)\}$ and $ \{ i \in \kappa :  G \models \neg \phi'(\bar{b}, \bar{a}'_i) \}$ are cofinal in $\kappa$. Then $\bar{b} = s (\bar{z}, \bar{k})$ for some term $s \in L_G$ and some finite tuples $\bar{z}$ in $X'$ and $\bar{k}$ in $H'$. Write $\bar{z} = \bar{z}^{\nu \frown} \bar{z}^{p \frown} \bar{z}^{\iota}$, with $\bar{z}^{\nu}, \bar{z}^p, \bar{z}^{\iota}$ listing the elements of the corresponding types in $\bar{z}$. In the same way as extending $\bar x_i$, we may add elements to the tuple $\bar{z}$ and assume that the handle of the $j$-th element of  $\bar{z}^p$ is the $j$-th element of  $\bar{z}^\nu$. 

Consider all of the elements in $\bar z^\nu$ and $((\bar{x}'_i)^\nu: i \in \kappa )$ as elements in $\Gamma(G)$ --- a saturated model of $\Th(C)$, and note that as $\Gamma(G)$ is interpretable in $G$ we have that the sequence $((\bar{x}'_i)^\nu : i \in \kappa)$ is also indiscernible in $\Gamma(G)$. As $\Th(\Gamma(G))$ is NIP, by Fact  \ref{fac: char of NIP} there is some $\alpha < \kappa$ such that $\tp_{\Gamma}(\bar{z}^\nu, (\bar{x}'_i)^\nu) = \tp_{\Gamma}(\bar{z}^\nu, (\bar{x}'_j)^\nu)$ for all $i,j > \alpha$.
%
Moreover, using indiscernibility of the sequence $(\bar{x}'_i)$ and possibly throwing away finitely many elements from the sequence, we have that 
$$(\bar{x}'_i)^p \cap \bar{z}^p = (\bar{x}'_j)^p \cap \bar{z}^p, (\bar{x}'_i)^\iota \cap \bar{z}^\iota = (\bar{x}'_j)^\iota \cap \bar{z}^\iota\ \mbox{(as tuples)}$$ 
and $\bar{x}'_i \cap \bar{x}'_j$ is constant, for all $i,j \in \kappa$. Thus, for any $i,j > \alpha$, the bijection $\sigma_{i,j}$ sending $\bar{x}'_i \bar{z}$ to  $\bar{x}'_j \bar{z}$ and preserving the order of the elements satisfies:
\begin{enumerate}
\item $\tp_{\Gamma}((\bar{x}'_i)^\nu \bar{z}^\nu) = \tp_{\Gamma}(\sigma_{i,j}((\bar{x}'_i)^\nu \bar{z}^\nu))$,
\item the map $\sigma_{i,j}$ fixes $\bar{z}$,
\item the map $\sigma_{i,j}$ respects the $1^{\nu}$-, $p$- and $1^\iota$-parts and the handles (since the handle of the $j$-th element of  $(\bar{x}'_i)^p$ is the $j$-th element of  $(\bar{x}'_i)^\nu$).
\end{enumerate}

Now consider $\bar{k}$ and $(\bar{h}'_i: i \in \kappa)$ as tuples of elements in $\langle H' \rangle$, which is a model of the stable theory $\Th(\langle H' \rangle)$. Moreover, as $(\bar{h}'_i : i \in \kappa)$ is $L_G$-indiscernible and $\Th(\langle H' \rangle)$ eliminates quantifiers, $(\bar{h}'_i : i \in \kappa)$ is also indiscernible in the sense of $\Th(\langle H' \rangle)$. Hence, by stability, there is some $\beta \in \kappa$ such that $\tp_{\langle H' \rangle}(\bar{k}\bar{h}'_i) = \tp_{\langle H' \rangle}(\bar{k} \bar{h}'_j)$ for all $i,j > \beta$.

Now,  Lemma \ref{Lem_GlueAut} gives us an automorphism of $G$ sending $\bar{x}'_i \bar{h}'_i \bar{z} \bar k$ to $\bar{x}'_j \bar{h}'_j \bar{z} \bar k$, so $\tp_G(\bar{x}'_i \bar{h}'_i/ \bar{z} \bar k) = \tp_G(\bar{x}'_j \bar{h}'_j/ \bar{z} \bar k)$ for all $i, j > \operatorname{max}\{\alpha, \beta\}$. This contradicts the choice of $\bar{b} = s (\bar{z}, \bar{k})$.
\end{proof}

\subsection*{An alternative argument for NIP}
An alternative proof can be provided relying on the previous work of Mekler and set-theoretic absoluteness.

Recall that the \emph{stability spectrum} of a complete theory $T$ is defined as the function
$$ f_T(\kappa) := \sup \{ |S_1(M)| : M \models T, |M| = \kappa \}$$
for all infinite cardinals $\kappa$. Furthermore, for every infinite cardinal $\kappa$, let $$\ded \kappa := \sup \{ \lambda : \textrm{exists a linear order of size }\leq \kappa \textrm{ with } \lambda \textrm{-many cuts} \}.$$
See \cite{chernikov2016number} and \cite[Section 6]{chernikov2016non} for a general discussion of the function $\ded \kappa$ and its connection to NIP. We will only need the following two facts.
\begin{fact}[Shelah \cite{shelah1990classification}]\label{fac: NIP by counting types} Let $T$ be a theory in a countable language.
\begin{enumerate}
\item It $T$ is NIP, then $f_T(\kappa) \leq (\ded \kappa)^{\aleph_0}$ for all infinite cardinals $\kappa$.
\item If $T$ has IP, then $f_T(\kappa) = 2^\kappa$ for all infinite cardinals $\kappa$.
\end{enumerate}
\end{fact}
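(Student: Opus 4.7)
Part (2), the easy direction, I would handle first by a compactness argument. Fix $\phi(x, \bar y)$ witnessing IP. For every infinite $\kappa$, compactness produces a model $M \models T$ of cardinality $\kappa$ containing parameters $(\bar a_i : i < \kappa)$ shattered by $\phi$. For each $S \subseteq \kappa$ the partial type $\pi_S(x) = \{\phi(x, \bar a_i) : i \in S\} \cup \{\neg \phi(x, \bar a_i) : i \in \kappa \setminus S\}$ is consistent, and distinct $S$ give pairwise inconsistent types; extending to complete $1$-types over $M$ exhibits $2^\kappa$ distinct points of $S_1(M)$, while $|S_1(M)| \leq 2^{|L(M)|} = 2^\kappa$ trivially, giving equality.

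For part (1), the plan is the standard two-step reduction: bound $|S_\phi(M)|$ for each single formula, then multiply over $L$. A complete $1$-type over $M$ is determined by its $\phi$-types as $\phi$ ranges over formulas of $L$, so
\[ |S_1(M)| \leq \prod_{\phi \in L} |S_\phi(M)| \leq \left( \sup_{\phi \in L} |S_\phi(M)| \right)^{\aleph_0}, \]
and it suffices to establish $|S_\phi(M)| \leq \ded \kappa$ for every formula $\phi(x, \bar y)$ (which is NIP since $T$ is). For this I would invoke the fundamental NIP alternation theorem: there is $n_\phi \in \mathbb{N}$ such that for any indiscernible sequence $(\bar b_i : i < \omega)$ and any $c$, the truth value of $\phi(c, \bar b_i)$ changes at most $n_\phi$ times. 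Fix a linear order $<^*$ on $M^{|\bar y|}$ of cardinality $\kappa$ and, using Shelah's extraction theorem, an order-indiscernible skeleton for it; each $\phi$-type $p$ over $M$ is then encoded by a $\leq n_\phi$-tuple of Dedekind cuts in the skeleton at which the $\phi$-truth value of a realization switches, plus finitely many bits recording the truth values between consecutive cuts. The number of such tuples of cuts in a linear order of size $\kappa$ is at most $(\ded \kappa)^{n_\phi} = \ded \kappa$.

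The main obstacle, which I would isolate as a separate lemma, is that a cut in the skeleton a priori only controls the $\phi$-type on the skeleton itself and not on all of $M^{|\bar y|}$. Bridging this gap requires either invoking Shelah's honest definitions for NIP theories --- every externally definable subset of $M$ is a trace of an $L$-definable set of bounded complexity, so the $\phi$-type is determined by an honest definition on the skeleton --- or, more elementarily, a transfinite enumeration argument showing inductively that at each stage $\alpha < \kappa$ the branching in the tree of $\phi$-types compatible with the alternation bound is controlled by cuts in the length-$\alpha$ prefix of the enumeration. Either route yields $|S_\phi(M)| \leq \ded \kappa$, and combining this with the product bound above gives $|S_1(M)| \leq (\ded \kappa)^{\aleph_0}$, completing part (1).
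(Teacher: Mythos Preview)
The paper does not prove this statement: it is recorded as a \emph{Fact} with a citation to Shelah's book, so there is no proof here to compare your proposal against. I will therefore assess your argument on its own merits.

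Your treatment of Part (2) is correct and standard.

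For Part (1), the reduction to bounding $|S_\phi(M)|$ for a single NIP formula and then taking a countable product is the right outline. The gap is in the bound $|S_\phi(M)| \leq \ded \kappa$ itself. Your indiscernible-skeleton idea does not work, and you correctly identify why: cuts in the skeleton control the $\phi$-type only on the skeleton, not on all of $M^{|\bar y|}$. Neither of your proposed repairs closes the gap. Honest definitions are a substantially later and deeper result (Shelah 2009, refined by Chernikov--Simon) whose proof already presupposes the NIP type-counting machinery you are trying to establish, so invoking them here is circular or at least anachronistic. The ``transfinite enumeration argument'' you sketch is too vague to evaluate; in particular, the alternation bound applies to \emph{indiscernible} sequences, not to an arbitrary enumeration of parameters, so it is unclear what ``compatible with the alternation bound'' means at stage $\alpha$.

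The standard argument avoids indiscernibles altogether and goes through the following combinatorial lemma about $\ded$: if $\mathcal{F} \subseteq 2^\kappa$ and $|\mathcal{F}| > \ded \kappa$, then for every $n \in \omega$ there is $S \subseteq \kappa$ with $|S| = n$ and $\mathcal{F} \restriction S = 2^S$. Applying this to $\mathcal{F} = \{ D_p : p \in S_\phi(M)\}$, where $D_p = \{ \bar a \in M^{|\bar y|} : \phi(x,\bar a) \in p\}$, one sees that $|S_\phi(M)| > \ded \kappa$ would force arbitrarily large finite subsets of $M^{|\bar y|}$ to be shattered, contradicting the finite VC-dimension of $\phi$. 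This gives $|S_\phi(M)| \leq \ded \kappa$ directly, and your product step then yields $(\ded \kappa)^{\aleph_0}$.
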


It is possible that in a model of ZFC, $\ded \kappa = 2^\kappa$ for all infinite cardinals $\kappa$ (e.g. in a model of the Generalized Continuum Hypothesis). However, there are models of ZFC in which these two functions are different.

\begin{fact}[Mitchell \cite{mitchell1972aronszajn}]
For every cardinal $\kappa$ of uncountable cofinality, there is a cardinal preserving Cohen extension such that $(\ded \kappa)^{\aleph_0} < 2^\kappa$.
\end{fact}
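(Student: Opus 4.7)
The plan is to combine Cohen-style forcing at $\kappa$ with GCH in the ground model to separate $(\ded \kappa)^{\aleph_0}$ from $2^\kappa$ while preserving all cardinals. Start in a model $V \models \operatorname{ZFC} + \operatorname{GCH}$, where $2^\kappa = \kappa^+$ and $\ded \kappa = \kappa^+$ as well (the upper bound is $2^\kappa$, the lower bound is standard), so $(\ded \kappa)^{\aleph_0} = 2^\kappa$ holds trivially in $V$ itself and the task is really to \emph{separate} the two sides by forcing.

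Next, I would take $\mathbb{P}$ to be the Cohen poset $\operatorname{Fn}(\lambda, 2, \kappa)$ of partial functions $\lambda \to 2$ of cardinality less than $\kappa$, for some $\lambda > \kappa^+$ (for concreteness $\lambda = \kappa^{++}$, or much larger if one wishes to make $2^\kappa$ arbitrarily large). Since GCH gives $\kappa^{<\kappa} = \kappa$, a standard $\Delta$-system argument shows that $\mathbb{P}$ is $\kappa^+$-cc, and $\mathbb{P}$ is also $\kappa$-closed by construction; together these guarantee that $\mathbb{P}$ preserves all cardinals and cofinalities. A nice-name count then yields $2^\kappa = \lambda$ in $V^{\mathbb{P}}$.

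The hard part is to show that $\ded \kappa \leq \kappa^+$ in $V^{\mathbb{P}}$. The idea is: given a linear order $L$ of size $\kappa$ and a Dedekind cut $D$ of $L$ in $V^{\mathbb{P}}$, by the $\kappa^+$-cc any $\mathbb{P}$-name for $D$ is determined by $\kappa$-many antichains each of size $\leq \kappa$, hence essentially by a subset of $\lambda$ of size $\kappa$. A reflection argument into an elementary submodel of size $\kappa^+$ of a sufficiently large $H_\theta$ then shows that the number of distinct cuts of any fixed $L$ is at most $\kappa^+$, giving the desired bound. This reflection/chain-condition step is the technical core of Mitchell's construction and is where the uncountable cofinality of $\kappa$ is essential: the argument consistently fails at $\kappa = \aleph_0$, in line with $\ded \aleph_0 = 2^{\aleph_0}$ always.

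Combining, in $V^{\mathbb{P}}$ one has $\ded \kappa \leq \kappa^+$; since $\kappa$-closure preserves $\omega$-sequences, cardinal arithmetic at $\aleph_0$ is inherited from $V$, so $(\kappa^+)^{\aleph_0} = \kappa^+$ (using that $\cof(\kappa^+) > \aleph_0$). Therefore $(\ded \kappa)^{\aleph_0} \leq \kappa^+ < \lambda = 2^\kappa$ in $V^{\mathbb{P}}$, as required.
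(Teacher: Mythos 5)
The paper does not prove this statement; it is cited as a result of Mitchell and used as a black box, so there is no internal proof to compare against. But your proposed proof has a fatal error in the choice of forcing, and the error is worth spelling out because it also explains why the hypothesis $\cof(\kappa)>\aleph_0$ appears in the statement.

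You force with $\mathbb{P}=\operatorname{Fn}(\lambda,2,\kappa)=\operatorname{Add}(\kappa,\lambda)$, which is $<\kappa$-closed. Since you start from GCH, $\kappa^{<\kappa}=\kappa$ in $V$, and $<\kappa$-closure together with cardinal preservation ensures $\kappa^{<\kappa}=\kappa$ remains true in $V^{\mathbb{P}}$. But whenever $\kappa^{<\kappa}=\kappa$, one already has $\ded\kappa=2^\kappa$: the lexicographic order on $2^{<\kappa}$ is a linear order of size $\kappa$, and each $f\in 2^\kappa$ determines a cut, with the cuts of the ``eventually $0$'' branches dense among them, so distinct branches give distinct cuts. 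Hence in $V^{\mathbb{P}}$ one has $\ded\kappa=2^\kappa=\lambda$, and your claimed conclusion $\ded\kappa\le\kappa^+$ is false. The ``reflection argument into an elementary submodel'' step, which you flag as the technical core but do not carry out, simply cannot work for this poset. A related symptom: nothing in your argument uses $\cof(\kappa)>\aleph_0$ (your appeal to $\kappa$-closedness for $\aleph_0$-sequences is vacuous once $\kappa>\aleph_0$, and for $\kappa=\aleph_0$ your scheme would ``prove'' $\ded\aleph_0<2^{\aleph_0}$ consistently, contradicting the ZFC theorem $\ded\aleph_0=2^{\aleph_0}$). Mitchell's actual construction adds Cohen \emph{reals}, i.e.\ forces with $\operatorname{Add}(\aleph_0,\lambda)$ (finite conditions), which is ccc and hence cardinal preserving; this blows up $2^\kappa$ to $\lambda$ for all infinite $\kappa<\lambda$, while the count of Dedekind cuts of a linear order of size $\kappa$ is controlled because any such cut added generically must have countable cofinality or coinitiality on one side (this is where $\cof(\kappa)>\aleph_0$ enters), and such cuts live in small intermediate extensions, giving $\ded\kappa\le\kappa^{\aleph_0}\cdot(2^\kappa)^V$. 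So the gap is not a detail to be filled in: the forcing itself must be changed.
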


In the original paper of Mekler \cite{mekler1981stability} it is demonstrated that if $C$ is a nice graph and $\Th(C)$ is stable, then $\Th(G(C))$ is stable. More precisely, the following result is established (in ZFC).
\begin{fact} \label{fac: stability is preserved}
Let $C$ be a nice graph. Then $f_{\Th(G(C))} (\kappa) \leq f_{\Th(C)}(\kappa) + \kappa$ for all infinite cardinals $\kappa$.
\end{fact}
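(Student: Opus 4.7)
The plan is to bound $|S_1(M)|$ for an arbitrary $M \models \Th(G(C))$ with $|M| = \kappa$ by decomposing each $1$-type into a graph-type over the interpreted graph $\Gamma(M)$, a type in the central elementary-abelian component, and a small amount of combinatorial bookkeeping. If these three factors contribute at most $f_{\Th(C)}(\kappa)$, $\kappa$, and $\kappa$ choices respectively, then $|S_1(M)| \leq f_{\Th(C)}(\kappa) \cdot \kappa = f_{\Th(C)}(\kappa) + \kappa$, as needed.

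To carry this out, first fix (via Fact \ref{fac: decomposition} and Proposition \ref{prop: type def}) a transversal $X_0$ of $M$ and a central set $H_0 \subseteq Z(M)$ linearly independent over $M'$ with $M = \langle X_0 \rangle \times \langle H_0 \rangle$. Pass to a sufficiently saturated $G \succeq M$. For $g \in G$, Proposition \ref{prop: type def} allows us to extend $X_0 \cup H_0$ to $X \cup H$ such that $X$ is a transversal of $G$, $H \subseteq Z(G)$ is linearly independent over $G'$, and $G = \langle X \rangle \times \langle H \rangle$; then $g = w(\bar x) \cdot h$ for some $L_G$-term $w$, a finite tuple $\bar x$ from $X$ (which we may take closed under handles), and $h \in \langle H \rangle$.

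The key claim, proved via Lemma \ref{Lem_GlueAut}, is that $\tp_G(g/M)$ is determined by the following data: the word $w$; the indication of which entries of $\bar x$ lie in $X_0$, together with the type ($1^\nu$, $p$, or $1^\iota$) of each entry and the handle correspondences; the $\Gamma$-type of the new $1^\nu$-entries of $\bar x$ over $X_0^\nu$ inside $\Gamma(G)$; and the type of $h$ over $H_0$ in $\Th(\langle H \rangle)$. Indeed, given two realizations $g_1, g_2$ of types sharing this data, Lemma \ref{Lem_GlueAut} produces an automorphism of $G$ that is the identity on $X_0 \cup H_0$ (by running the back-and-forth starting from the identity on the common $X_0$-part) and hence on all of $M$, while mapping the $\bar x, h$ data of $g_1$ to that of $g_2$, so $g_1 \mapsto g_2$.

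Counting: the combinatorial bookkeeping offers at most $\kappa$ choices (a finite word together with finitely many selections from the size-$\kappa$ set $X_0$); the graph types of finite tuples over $\Gamma(M)$ contribute at most $f_{\Th(C)}(\kappa)$ since $|\Gamma(M)| \leq \kappa$ and this bound is stable under passage to finite tuples once $f_{\Th(C)}(\kappa)$ is infinite; and the types in $\Th(\langle H \rangle)$ over a set of size $\leq \kappa$ number at most $\kappa$ because that theory is uncountably categorical by Fact \ref{fac: decomposition}(2),(3). Multiplying gives the desired bound. The main obstacle is the verification of the key claim: one must check that in the application of Lemma \ref{Lem_GlueAut} the bijection between transversal fragments restricts to the identity on the common $X_0$-part (arranged by listing $X_0$-entries in matching positions in $\bar x$ for both realizations), and that the automorphism of $\langle H \rangle$ fixes $\langle H_0 \rangle$ pointwise, which uses quantifier elimination in $\Th(\langle H \rangle)$ together with the transversal-invariance of the derived subgroup from Lemma \ref{lem: invariance of tranversal}.
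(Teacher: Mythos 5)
The paper does not prove Fact \ref{fac: stability is preserved}; it is quoted directly from Mekler's original paper and used only as a black box in the alternative absoluteness argument for Theorem \ref{thm: NIP}. Your proposal is therefore a fresh proof rather than a reconstruction of one in the text, and it appears sound: fix a decomposition $M = \langle X_0 \rangle \times \langle H_0 \rangle$, extend it \emph{once and for all} (not per element $g$, as your wording momentarily suggests) via Proposition \ref{prop: type def} to a decomposition $G = \langle X \rangle \times \langle H \rangle$ of a sufficiently saturated $G \succeq M$, and use Lemma \ref{Lem_GlueAut} to argue that $\tp_G(g/M)$ for $g = w(\bar x) \cdot h$ is determined by the term $w$, the identities of the actual $X_0$-elements appearing in $\bar x$ together with the types and handle pattern of the remaining entries, the $\Gamma$-type of the new $1^\nu$-entries over $\Gamma(M)$, and the type of $h$ over $H_0$ in $\langle H \rangle$; the data space has size $\aleph_0 \cdot \kappa \cdot f_{\Th(C)}(\kappa) \cdot \kappa = f_{\Th(C)}(\kappa) + \kappa$, which bounds $|S_1(M)|$. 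Two small points deserve to be spelled out. First, the passage from bounding $|S_1(\Gamma(M))|$ by $f_{\Th(C)}(\kappa)$ to bounding $|S_n(\Gamma(M))|$ for finite $n$ by the same cardinal is correct but needs a sentence: realize an $n$-tuple one coordinate at a time inside models of size $\kappa$, so $|S_n(\Gamma(M))| \leq f_{\Th(C)}(\kappa)^n = f_{\Th(C)}(\kappa)$ as the latter is infinite. Second, the appeal to Lemma \ref{lem: invariance of tranversal} at the end is superfluous: applying Lemma \ref{Lem_GlueAut} with $\bar{h}_1 = (H_0, h_1)$ and $\bar{h}_2 = (H_0, h_2)$ already forces the resulting automorphism of $K_X$ to fix $H_0$ pointwise, and hence to fix $\langle H_0 \rangle$ pointwise since it is a group automorphism; no quantifier elimination or transversal-invariance is needed for that step.
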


Finally, note that the property ``$T$ is NIP'' is a finitary formula-by-formula statement, hence set-theoretically absolute. Thus in order to prove Theorem \ref{thm: NIP}, it is enough to prove it in \emph{some} model of ZFC. Working in Mitchell's model for some $\kappa$ of uncountable cofinality (hence $(\ded \kappa)^{\aleph_0} + \kappa < 2^\kappa$), it follows immediately by combining Facts \ref{fac: NIP by counting types} and \ref{fac: stability is preserved}.

\section{Preservation of $k$-dependence}\label{sec: kDep}

We are following the notation from \cite{chernikov2014n}, and begin by recalling some of the facts there.

\begin{defn}\rm \label{def: k-dependence}
A formula $\varphi\left(x;y_{0},\ldots,y_{k-1}\right)$ has the \emph{$k$-independence property} (with respect to a theory $T$), if in some model there is a sequence $\left(a_{0,i},\ldots,a_{k-1,i}\right)_{i\in\omega}$
such that for every $s\subseteq\omega^{k}$ there is $b_{s}$ such
that 
\[
\models\phi\left(b_{s};a_{0,i_{0}},\ldots,a_{k-1,i_{k-1}}\right)\Leftrightarrow\left(i_{0},\ldots,i_{k-1}\right)\in s\mbox{.}
\]
Here $x,y_0, \ldots, y_{k-1}$ are tuples of variables.
Otherwise we say that $\varphi\left(x,y_{0},\ldots,y_{k-1}\right)$ is \emph{$k$-dependent}.
A theory is \emph{$k$-dependent} if it implies that every formula is
$k$-dependent.

\end{defn}

To characterize $k$-dependence in a formula-free way, we have to work with a more complicated form of indiscernibility.

\begin{defn}
Fix a language $L_{\opg}^k=\{R(x_0,\ldots ,x_{k-1}),<, P_0(x),\ldots , P_{k-1}(x)\}$. 
An \emph{ordered $k$-partite hypergraph} is an $L^{k}_{\opg}$-structure $ \mathcal{A} = \left(A; <, R, P_0, \ldots , P_{k-1} \right)$ such that:
\begin{enumerate}
\item
$A$ is the (pairwise disjoint) union $P^{\mathcal{A}}_0 \sqcup\ldots \sqcup P^{\mathcal{A}}_{k-1}$,
\item $R^{\mathcal{A}}$ is a symmetric relation so that if $(a_0,\ldots ,a_{k-1})\in R^{\mathcal{A}}$ then $P_i\cap \{a_0, \ldots, a_{k-1}\}$ is a singleton for every $i<k$,
\item
$<^{\mathcal{A}}$ is a linear ordering on $A$ with $P_0(A)<\ldots <P_{k-1}(A)$.
\end{enumerate}

\end{defn}

\begin{fact} \label{fac: Gnp} \cite[Fact 4.4 + Remark 4.5]{chernikov2014n}
Let $\mathcal{K}$ be the class of all finite ordered $k$-partite hypergraphs. Then $\mathcal{K}$ is a Fra\"{i}ss\'e class, and its limit is called the \emph{ordered $k$-partite random hypergraph}, which we will denote by $G_{k,p}$.  An ordered $k$-partite hypergraph $\mathcal{A}$ is a model of $\Th(G_{k,p})$ if and only if:
\begin{itemize}
\item
$(P_i(A), <)$ is a model of DLO for each $i<k$,
\item
for every $j<k$, any finite disjoint sets 
$A_0,A_1\subset {\prod_{i<k, i\neq j}P_i(A)}$
 and
$b_0<b_1\in P_j(A)$, there is $b_0<b<b_1$ such that: $R(b,\bar{a})$ holds for every $\bar{a} \in A_0$ and $\neg R(b, \bar{a})$ holds for every $\bar{a} \in A_1$.
\end{itemize}
\end{fact}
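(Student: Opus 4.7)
The plan is to verify Fraïssé's three conditions (hereditary property, joint embedding, and amalgamation) for the class $\mathcal{K}$, which will yield a countable ultrahomogeneous limit $G_{k,p}$, and then show that the two stated axioms form a complete theory via a back-and-forth argument establishing $\aleph_0$-categoricity; by the \L os--Vaught test this theory must be $\Th(G_{k,p})$.

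First I would check that $\mathcal{K}$ is a Fraïssé class. The hereditary property is immediate, since any induced substructure of a finite ordered $k$-partite hypergraph inherits the partition $P_0, \ldots, P_{k-1}$, the linear order, and the relation $R$ (with the singleton-per-part condition preserved). For amalgamation, which also gives joint embedding upon taking $A = \emptyset$, I would amalgamate two embeddings $A \hookrightarrow B$ and $A \hookrightarrow C$ by taking the pushout $D = B \sqcup_A C$ as a set, amalgamating the linear orders on each part $P_i^D$ separately (DLO-style, interleaving fresh elements arbitrarily subject to the restrictions coming from $A$), and letting $R^D = R^B \cup R^C$ with no new tuples involving a fresh element of $B$ and a fresh element of $C$. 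This is a legitimate member of $\mathcal{K}$ because $R^D$-tuples still meet each part in exactly one element. The Fraïssé limit $G_{k,p}$ is then the unique countable ultrahomogeneous structure with age $\mathcal{K}$.

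Next I would verify the axiomatization. That $G_{k,p}$ satisfies both axioms is straightforward: each $(P_i^{G_{k,p}}, <)$ is a countable dense linear order without endpoints (by homogeneity of $\mathcal{K}$ within each part), and the extension axiom follows from ultrahomogeneity applied to one-point extensions of a finite substructure. For the converse I would run a back-and-forth between two countable models $\mathcal{A}, \mathcal{B}$ satisfying the axioms, building a partial isomorphism by alternately handling elements of each part. When extending the matching to a new element $a \in P_j^{\mathcal{A}}$, its isomorphism type over the already-matched finite set is determined by (i) its position in the order on $P_j^{\mathcal{A}}$ relative to previously matched elements of $P_j$, and (ii) the pattern of $R^{\mathcal{A}}$ between $a$ and the finitely many already-matched $(k-1)$-tuples in $\prod_{i \neq j} P_i^{\mathcal{A}}$. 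The DLO axiom on $P_j^{\mathcal{B}}$ supplies a candidate interval, and the extension axiom on $\mathcal{B}$ then furnishes a point in that interval realizing the prescribed $R$-pattern. This yields $\aleph_0$-categoricity, hence completeness.

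The main obstacle is ensuring the extension axiom really suffices to realize every partial one-point type needed by the back-and-forth. The delicate point is that the axiom as stated handles only the case of a strict interval $(b_0, b_1)$ inside $P_j$; matching elements at the top or bottom of the currently matched portion of $P_j$ must be handled by first invoking unboundedness from the DLO axiom to produce auxiliary $b_0$ or $b_1$, and then applying the extension axiom. Once this minor adjustment is in place, the bookkeeping is routine, and the argument closes without further complication.
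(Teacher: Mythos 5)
The paper does not give its own proof of this statement; it is stated as a Fact and attributed directly to \cite[Fact 4.4 + Remark 4.5]{chernikov2014n}. So there is no internal argument to compare against. Your outline is a correct, self-contained verification by the standard Fra\"{i}ss\'e route: HP is immediate, free amalgamation (freely interleaving the new order points of each part subject to the constraints from $A$ and taking $R^D = R^B \cup R^C$) preserves the ``one vertex per part'' condition since there are no forbidden configurations, the two axioms hold in the limit by the extension property of the Fra\"{i}ss\'e limit applied to one-point extensions, and the back-and-forth for $\aleph_0$-categoricity works because the quantifier-free type of a new element of $P_j$ over a finite set $F$ is determined exactly by its order position in $F \cap P_j$ together with its $R$-pattern against the $(k-1)$-tuples from $F$ --- which is precisely what the second axiom supplies, modulo the boundary cases you correctly flag as needing ``no endpoints'' to manufacture an ambient interval first. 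One small presentational caveat: deriving JEP from AP over the empty structure requires allowing the empty $L^k_{\opg}$-structure; if one prefers not to, JEP is anyway trivial here by stacking $B$ below $C$ part by part. With that, the argument is complete and matches what one would find by unwinding the cited reference.
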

\noindent We denote by $O_{k,p}$ the reduct of $G_{k,p}$ to the language $L^{k}_{\op} = \{<, P_0(x),\ldots , P_{k-1}(x)\}$.

\begin{defn}\rm
Let $T$ be a theory in the language $L$, and let $\M$ be a monster
model of $T$.
\begin{enumerate}
\item Let $I$ be a structure in the language $L_0$.
We say that $\bar{a}=\left(a_{i}\right)_{i\in I}$, 
with $a_{i}$ a tuple in $\M$, is \emph{$I$-indiscernible} over a set of parameters $C \subseteq \M$ if for
all $n\in\omega$ and all $i_{0},\ldots,i_{n}$ and $j_{0},\ldots,j_{n}$
from $I$ we have:
$$
\qftp_{L_0}\left(i_{0}, \ldots, i_{n}\right)=\qftp_{L_0}\left(j_{0},\ldots,j_{n}\right)
\Rightarrow $$
$$\tp_{L}\left(a_{i_{0}}, \ldots, a_{i_{n}} / C\right)=\tp_{L}\left(a_{j_{0}}, \ldots, a_{j_{n}} / C \right)
.$$
\item For $L_0$-structures $I$ and $J$, we say that $\left(b_{i}\right)_{i\in J}$
is \emph{based on} $\left(a_{i}\right)_{i\in I}$ over a set of parameters $C \subseteq \M$ if for any finite
set $\Delta$ of $L(C)$-formulas,  and for any finite tuple $\left(j_{0},\ldots,j_{n}\right)$
from $J$ there is a tuple $\left(i_{0},\ldots,i_{n}\right)$ from
$I$ such that:

\begin{itemize}
\item $\qftp_{L_0}\left(j_{0},\ldots,j_{n}\right)=\qftp_{L_0}\left(i_{0},\ldots,i_{n}\right)$ and
\item $\tp_{\Delta}\left(b_{j_{0}},\ldots,b_{j_{n}}\right)=\tp_{\Delta}\left(a_{i_{0}},\ldots,a_{i_{n}}\right)$.
\end{itemize}
\end{enumerate}
\end{defn}

The following fact gives a method for finding $G_{k,p}$-indiscernibles using structural Ramsey theory.

\begin{fact}\cite[Corollary 4.8]{chernikov2014n}\label{fac: random hypergraph indiscernibles exist}
Let $C \subseteq \M$ be a small set of parameters.
\begin{enumerate}
\item For any $\bar{a}=\left(a_{g}\right)_{g\in O_{k,p}}$, there is some $\left(b_{g}\right)_{g\in O_{k,p}}$ which is $O_{k,p}$-indiscernible over $C$ and is based on $\bar{a}$ over $C$.
\item For any $\bar{a}=\left(a_{g}\right)_{g\in G_{k,p}}$, there is some $\left(b_{g}\right)_{g\in G_{k,p}}$ which is $G_{k,p}$-indiscernible over $C$ and is based on $\bar{a}$ over $C$.
\end{enumerate}

\end{fact}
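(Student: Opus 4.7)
The plan is to derive both parts from the standard extraction principle for generalized indiscernibles indexed by a Fra\"{i}ss\'e limit of a Ramsey class. The class $\mathcal{K}$ of finite ordered $k$-partite hypergraphs is a Fra\"{i}ss\'e class whose Fra\"{i}ss\'e limit is $G_{k,p}$ by Fact \ref{fac: Gnp}, and its reduct to $L^{k}_{\op}$ yields $O_{k,p}$; it therefore suffices to establish the Ramsey property for each of these classes and then run a compactness extraction.

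First, I would record the Ramsey-theoretic input. For finite ordered $k$-partitioned structures (no $R$), the Ramsey property follows from an iterated application of the finite Ramsey theorem applied part-by-part, since an embedding of one such structure into another is determined by its restrictions to each $P_i$. For the full class of finite ordered $k$-partite hypergraphs, I would invoke the Ne\v{s}et\v{r}il--R\"odl theorem on Ramsey classes of ordered relational structures.

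Second, given $\bar a = (a_g)_{g \in M}$ (with $M = O_{k,p}$ or $G_{k,p}$) and a small set $C$ of parameters, introduce fresh variables $(y_g)_{g \in M}$ and consider the partial type $\Sigma$ over $C \cup \{a_g : g \in M\}$ whose formulas assert, for each finite $\Delta \subseteq L(C)$ and each finite tuple $(j_0, \ldots, j_n)$ from $M$, both that $\tp_\Delta(y_{j_0}, \ldots, y_{j_n})$ depends only on $\qftp(j_0, \ldots, j_n)$, and that this common $\Delta$-type equals $\tp_\Delta(a_{i_0}, \ldots, a_{i_n})$ for some tuple $(i_0, \ldots, i_n)$ from $M$ with $\qftp(i_0, \ldots, i_n) = \qftp(j_0, \ldots, j_n)$. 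A realization of $\Sigma$ is exactly a family $(b_g)_{g \in M}$ which is $M$-indiscernible over $C$ and based on $\bar a$ over $C$.

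Third, by compactness it suffices to realize each finite fragment of $\Sigma$, and this is where the Ramsey property enters. Fix a finite $\Delta$ and a finite substructure $B \subseteq M$, and let $r$ bound the number of $\Delta$-types realized in $\M$. By the Ramsey property, applied once for each quantifier-free isomorphism type $A$ of a subtuple of $B$, there is some $D \in \mathcal{K}$ such that $D \to (B)^A_r$ for every such $A$ simultaneously. Embed $D$ into $M$ via the universal property of the Fra\"{i}ss\'e limit, color each copy of $A$ inside $D$ by the $\Delta$-type (over $C$) of the corresponding subtuple of $\bar a$, and extract a monochromatic copy of $B$ inside $M$. Transferring the $\Delta$-types from these $\bar a$-subtuples back to the $y$-variables indexed by $B$ realizes the chosen finite fragment of $\Sigma$. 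The principal obstacle is the structural Ramsey property for finite ordered $k$-partite hypergraphs; once that input is in hand, the rest is a routine Scow-style modeling-property extraction.
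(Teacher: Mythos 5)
The paper does not give a proof here: the statement is quoted as a black-box Fact from \cite{chernikov2014n}. Your outline --- establish the Ramsey property for each of the two Fra\"{i}ss\'e classes, then extract by compactness via the partial type $\Sigma$ (the modeling-property argument in the style of Scow) --- is exactly the argument underlying Corollary 4.8 of that reference, so the approach is correct. The one place deserving more care is your second Ramsey input: the class of finite ordered $k$-partite hypergraphs is not the class of \emph{all} finite ordered $L_{\opg}^{k}$-structures, since one must impose that the $P_i$'s partition the universe, that $<$ stratifies the parts, and that $R$ holds only on transversals; this does not drop out of a bald invocation of the Ne\v{s}et\v{r}il--R\"odl theorem but requires either the forbidden-irreducible-substructure version of that theorem or an encoding reduction, and this is precisely what \cite{chernikov2014n} supplies en route to Corollary 4.8. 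The $O_{k,p}$ case via iterated (product) Ramsey and the compactness step realizing $\Sigma$ from finite monochromatic fragments are both standard and correct as you state them.
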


\begin{fact} \cite[Proposition 6.3]{chernikov2014n} \label{fac: char of NIP_k by preserving indisc}
Let $T$ be a complete theory and let $\mathbb{M} \models T$ be a monster model. For any $k \in \mathbb{N}$, the following are
equivalent:
\begin{enumerate}
\item $T$ is $k$-dependent.
\item For any $\left(a_{g}\right)_{g\in G_{k,p}}$ and $b$ with $a_g, b$ finite tuples in $\mathbb{M}$,
if $\left(a_{g}\right)_{g\in G_{n,p}}$ is $G_{n,p}$-indiscernible over
$b$ and $O_{k,p}$-indiscernible (over $\emptyset$), then it is $O_{k,p}$-indiscernible over $b$.
\end{enumerate}

\end{fact}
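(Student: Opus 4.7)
The plan is to prove both implications by contrapositive, and in both cases to exploit the $k$-IP configurations together with Fact \ref{fac: random hypergraph indiscernibles exist}(2).

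For $(2) \Rightarrow (1)$: suppose $\phi(x; y_0, \ldots, y_{k-1})$, $(a_{j,i})_{j<k, i \in \omega}$, and $(b_s)_{s \subseteq \omega^k}$ witness the $k$-IP. A straightforward compactness argument (realizing each finite piece by setting $c_{v_{j,i}} := a_{j,i}$ after enumerating each part's chosen vertices in order, and by taking $b := b_{s^*}$ where $s^*$ encodes which of the relevant $k$-tuples are $R$-edges) produces a tuple $b$ and a sequence $(c_g)_{g \in G_{k,p}}$ in $\M$ such that $\models \phi(b; c_{\bar g}) \iff R(\bar g)$ for every hyperedge $\bar g$ of $G_{k,p}$. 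Now I would apply Fact \ref{fac: random hypergraph indiscernibles exist}(2) with $C = \{b\}$ to extract $(d_g)_{g \in G_{k,p}}$ that is $G_{k,p}$-indiscernible over $b$ and based on $(c_g)_g$ over $b$. This $(d_g)$ is in particular $G_{k,p}$-indiscernible over $\emptyset$ (hence $O_{k,p}$-indiscernible over $\emptyset$), while basedness applied to the formula $\phi(b; -)$ shows that any two $k$-hyperedges of the same $O_{k,p}$-type but different $R$-value have different $\phi(b; -)$-values, so $(d_g)$ is not $O_{k,p}$-indiscernible over $b$, contradicting~(2).

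For $(1) \Rightarrow (2)$: suppose $(a_g)_g$ is $G_{k,p}$-indiscernible over $b$, $O_{k,p}$-indiscernible over $\emptyset$, but fails $O_{k,p}$-indiscernibility over $b$. Pick $\bar g, \bar h$ in $G_{k,p}$ of the same $L^k_{\op}$-type with $\models \phi(a_{\bar g}; b) \wedge \neg \phi(a_{\bar h}; b)$. By $G_{k,p}$-indiscernibility over $b$ their $L^k_{\opg}$-types must differ, so they differ on some $R$-relation on a $k$-subtuple with one vertex per part; walking between the two $L^k_{\opg}$-types by single $R$-flips I can isolate a \emph{critical} pair $\bar g_1 = (\bar f, \bar e^1)$ and $\bar g_2 = (\bar f, \bar e^2)$ of the same $L^k_{\op}$-type differing only in the truth value of one such $R$, with $\models \phi(a_{\bar g_1}; b) \wedge \neg \phi(a_{\bar g_2}; b)$. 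The strategy is then to parametrize the $R$-bit of that critical $k$-subtuple over $s \subseteq \omega^k$: for each $s$, the extension property of $G_{k,p}$ yields a configuration $(\bar f^{(s)}, (e^{(s)}_{l, m})_{l<k, m \in \omega})$ whose $L^k_{\op}$-type extends $(\bar f, \bar e^1)$ with $\omega$-many candidates in each part, whose background $R$-relations match those of $\bar g_1$, and such that $R(e^{(s)}_{0, m_0}, \ldots, e^{(s)}_{k-1, m_{k-1}})$ holds iff $(m_0, \ldots, m_{k-1}) \in s$. Since the $L^k_{\op}$-type of the underlying configuration is independent of $s$, $O_{k,p}$-indiscernibility over $\emptyset$ plus saturation produces an automorphism $\sigma_s \in \Aut(\M)$ sending the $s = \emptyset$ labels to the $s$-labels. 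Setting $\bar a := a_{\bar f^{(\emptyset)}}$, $c_{l, m} := a_{e^{(\emptyset)}_{l, m}}$, and $b_s := \sigma_s^{-1}(b)$, one verifies (applying $\sigma_s$ and then using $G_{k,p}$-indiscernibility over $b$ to reduce to $\bar g_1$ or $\bar g_2$) that $\models \phi(\bar a, c_{0, m_0}, \ldots, c_{k-1, m_{k-1}}; b_s) \iff (m_0, \ldots, m_{k-1}) \in s$. This yields the $k$-IP for the formula $\varphi((x, \bar w); \bar y_0, \ldots, \bar y_{k-1}) := \phi(\bar w, \bar y_0, \ldots, \bar y_{k-1}; x)$ with parameters $(b_s, \bar a)$, contradicting $k$-dependence of $T$.

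The main obstacle is the $(1) \Rightarrow (2)$ direction: one must cleanly separate the ``background'' of $\bar g$ that is common across all $s$ from the single critical $k$-subtuple whose $R$-value drives the dependence, and then synchronize the two indiscernibility hypotheses, using $O_{k,p}$-indiscernibility over $\emptyset$ to move labels via automorphisms of $\M$ while using $G_{k,p}$-indiscernibility over $b$ to ensure that $\phi$ reads off exactly the $R$-bit on the critical subtuple.
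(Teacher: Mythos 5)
This statement is cited as a Fact from \cite[Proposition 6.3]{chernikov2014n}; the paper does not reproduce a proof, so your argument can only be assessed on its own merits.

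Your $(1)\Rightarrow(2)$ direction, while sketchy in places (isolating a critical $R$-flip and then parametrizing it via the extension property of $G_{k,p}$ and automorphisms obtained from $O_{k,p}$-indiscernibility over $\emptyset$ together with saturation), implements a correct idea and I believe can be made to work as you describe.

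The $(2)\Rightarrow(1)$ direction, however, has a genuine gap. You extract $(d_g)_{g\in G_{k,p}}$ that is $G_{k,p}$-indiscernible over $b$ and based on $(c_g)$ over $b$, and then assert that $(d_g)$ is ``in particular $G_{k,p}$-indiscernible over $\emptyset$ (hence $O_{k,p}$-indiscernible over $\emptyset$)''. The parenthetical implication is backwards. Recall the definition: $(a_i)_{i\in I}$ is $I$-indiscernible if tuples of indices with equal $L_0$-quantifier-free type get mapped to tuples of equal $L$-type. Since $L^k_{\op}\subseteq L^k_{\opg}$, equality of $L^k_{\op}$-qftp is a \emph{weaker} hypothesis on the index tuples than equality of $L^k_{\opg}$-qftp, so $O_{k,p}$-indiscernibility is the \emph{stronger} condition on the sequence: it is $O_{k,p}$-indiscernibility that implies $G_{k,p}$-indiscernibility, not the other way round. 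Indeed a $G_{k,p}$-indiscernible sequence is generally allowed to ``see'' the edge relation $R$, and your $(d_g)$ is specifically built so that $\phi(b; d_{\bar g})$ reads off $R(\bar g)$, so it typically will not be $O_{k,p}$-indiscernible even over $\emptyset$.

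To repair this you need a two-stage extraction together with a preservation lemma that you did not invoke. Starting from $(c_g)$ (which you should arrange to be \emph{shattered} by $\phi$, i.e.\ for every $A\subseteq\prod_j P_j$ there is $b_A$ with $\phi(b_A;c_{\bar g})\iff\bar g\in A$, rather than to satisfy the $\phi$-condition for a single $b$ only): first apply Fact \ref{fac: random hypergraph indiscernibles exist}(1) over $\emptyset$ to obtain an $O_{k,p}$-indiscernible $(e_g)_{g\in O_{k,p}}$ based on $(c_g)$; check that shattering is a based-on-preserved condition. Then pick $b$ witnessing shattering relative to $R$ (so $\phi(b;e_{\bar g})\iff R(\bar g)$) and apply Fact \ref{fac: random hypergraph indiscernibles exist}(2) over $b$ to obtain $(d_g)_{g\in G_{k,p}}$ that is $G_{k,p}$-indiscernible over $b$ and based on $(e_g)$ over $b$. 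The crucial missing step is the observation, used explicitly in item (5) of the list in this paper's proof of Theorem \ref{thm: k-dependence is preserved} and credited to \cite[Lemma 6.2]{chernikov2014n}, that a $G_{k,p}$-indexed sequence based (even over parameters) on an $O_{k,p}$-indiscernible sequence is itself $O_{k,p}$-indiscernible (over $\emptyset$): if $\bar j_0$ and $\bar j_1$ have the same $L^k_{\op}$-qftp and $\bar i_0\bar i_1$ is produced by basedness for $\bar j_0\bar j_1$, then $\bar i_0$ and $\bar i_1$ again have equal $L^k_{\op}$-qftp, so $\tp(e_{\bar i_0})=\tp(e_{\bar i_1})$, and transferring this back yields $\tp(d_{\bar j_0})=\tp(d_{\bar j_1})$. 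Only with this lemma in hand do you get all three required properties of $(d_g)$ simultaneously.
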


We are ready to prove the main theorem of the section.
\begin{theorem}\label{thm: k-dependence is preserved}
For any $k \in \mathbb{N}$ and a nice graph $C$, $\Th(C)$ is $k$-dependent if and only if $\Th(G(C))$ is $k$-dependent.
\end{theorem}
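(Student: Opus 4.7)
The plan is as follows. The easy direction is immediate: since $C$ is interpretable in any model of $\Th(G(C))$, $k$-dependence of $\Th(G(C))$ transfers to $\Th(C)$. For the converse, assume $\Th(C)$ is $k$-dependent and verify the characterization in Fact \ref{fac: char of NIP_k by preserving indisc} for $\Th(G(C))$: take a sequence $(a_g)_{g \in G_{k,p}}$ of finite tuples in a monster $G \models \Th(G(C))$ and a finite tuple $b \in G$ such that $(a_g)$ is $G_{k,p}$-indiscernible over $b$ and $O_{k,p}$-indiscernible over $\emptyset$, and show that $(a_g)$ is $O_{k,p}$-indiscernible over $b$.

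The strategy mirrors the proof of Theorem \ref{thm: NIP}, replacing the linearly ordered pigeonhole arguments by their $G_{k,p}$-indexed analogues via Fact \ref{fac: random hypergraph indiscernibles exist}(2). Fix a transversal $X$ and $H \subseteq Z(G)$ linearly independent over $G'$ with $G = \langle X \rangle \times \langle H \rangle$, and write $a_g = t_g(\bar{x}_g, \bar{h}_g)$ with $\bar{x}_g = \bar{x}_g^{\nu \frown}\bar{x}_g^{p \frown}\bar{x}_g^\iota$ in $X$ and $\bar{h}_g$ in $H$, placing handles of $\bar{x}_g^p$ at fixed positions in $\bar{x}_g^\nu$. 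Applying Fact \ref{fac: random hypergraph indiscernibles exist}(2) to the extended sequence $(a_g, \bar{x}_g, \bar{h}_g)_{g \in G_{k,p}}$ over $b$ produces a $G_{k,p}$-indiscernible-over-$b$ sequence $(a'_g, \bar{x}'_g, \bar{h}'_g)$ based on it. Since being a transversal and being linearly independent over $G'$ are type-definable by Proposition \ref{prop: type def}, being a handle is definable (Fact \ref{fact: handle}), and the equation $a = t(\bar{x}, \bar{h})$ for a fixed term $t$ is first-order, after a pigeonhole reduction to a single term $t$ and constant tuple lengths the new sequence inherits all structural data: $\{\bar{x}'_g\}$ extends to a transversal $X'$, $\{\bar{h}'_g\}$ extends to some $H'\subseteq Z(G)$ linearly independent over $G'$ with $G = \langle X' \rangle \times \langle H' \rangle$, and $a'_g = t(\bar{x}'_g, \bar{h}'_g)$. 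A direct check using the defining property of ``based on'' shows that $(a'_g)$ is still $O_{k,p}$-indiscernible over $\emptyset$ and that showing $O_{k,p}$-indiscernibility of $(a'_g)$ over $b$ suffices to deduce the same for $(a_g)$.

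Now decompose $b = s(\bar{z}, \bar{k})$ with $\bar{z} \in X'$ and $\bar{k} \in H'$, split $\bar{z}$ by type and arrange handles of $\bar{z}^p$ at fixed positions in $\bar{z}^\nu$. Given tuples $(g_0, \ldots, g_n)$ and $(g'_0, \ldots, g'_n)$ in $G_{k,p}$ with the same $L^k_{\op}$-qftp, seek an automorphism of $G$ fixing $b$ sending $a'_{g_i}$ to $a'_{g'_i}$. The $\nu$-parts $((\bar{x}'_g)^\nu)_g$ form a sequence in $\Gamma(G) \models \Th(C)$, $G_{k,p}$-indiscernible over $\bar{z}^\nu$ there by interpretability; the $k$-dependence of $\Th(C)$ combined with Fact \ref{fac: char of NIP_k by preserving indisc} then upgrades this to $O_{k,p}$-indiscernibility over $\bar{z}^\nu$, yielding an automorphism $\sigma$ of $\Gamma(G)$ fixing $\bar{z}^\nu$ and sending $(\bar{x}'_{g_i})^\nu$ to $(\bar{x}'_{g'_i})^\nu$. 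The $H'$-parts $(\bar{h}'_g)_g$ lie in the stable group $\langle H' \rangle$ (Fact \ref{fac: decomposition}(2)); by stability (hence $k$-dependence) one analogously obtains an automorphism $\rho$ of $\langle H' \rangle$ fixing $\bar{k}$ and sending $\bar{h}'_{g_i}$ to $\bar{h}'_{g'_i}$. Using $G_{k,p}$-indiscernibility of the full extended sequence over $b$, the intersections $\bar{x}'_g \cap \bar{z}$ split by type are constant in $g$ and the $p$- and $\iota$-parts match correctly across $g_i \mapsto g'_i$; extending $\sigma$ to a type/handle-respecting bijection on the relevant part of $X'$ and invoking Lemma \ref{Lem_GlueAut} glues $\sigma$ and $\rho$ into an automorphism of $G$ fixing $b$ with the desired action on the $a'$'s, completing the argument.

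The principal obstacle is to ensure that Fact \ref{fac: char of NIP_k by preserving indisc} is applicable in $\Gamma(G)$: the sequence $((\bar{x}'_g)^\nu)_g$ must be $O_{k,p}$-indiscernible over $\emptyset$, not merely $G_{k,p}$-indiscernible over $\bar{z}^\nu$. This does not follow automatically from $O_{k,p}$-indiscernibility of $(a_g)$ alone, since the decomposition $a_g = t(\bar{x}_g, \bar{h}_g)$ is not $L^k_{\op}$-type preserving component-wise. An additional refinement step based on Fact \ref{fac: random hypergraph indiscernibles exist} must therefore be inserted to upgrade the extended sequence to one having the required reduct-indiscernibility, with a careful verification that this extra step does not destroy the transversal/handle/linear-independence structure arranged in the previous steps.
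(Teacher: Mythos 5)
Your plan is structurally parallel to the paper's, and the ingredients you invoke (the decomposition $a_g = t(\bar x_g, \bar h_g)$, type-definability from Proposition \ref{prop: type def}, the $G_{k,p}$/$O_{k,p}$ indiscernibility calculus from Facts \ref{fac: random hypergraph indiscernibles exist} and \ref{fac: char of NIP_k by preserving indisc}, reduction to $\Gamma(G)$ on the $1^\nu$-part and to the stable abelian complement on the $H$-part, and the gluing Lemma \ref{Lem_GlueAut}) are exactly the right ones. However, the gap you flag at the end is genuine and not incidental: it comes from fixing $b$ up front. If you start from a pair $(a_g)_{g \in G_{k,p}}$, $b$ satisfying the hypotheses of Fact \ref{fac: char of NIP_k by preserving indisc}, then after decomposing $a_g = t(\bar x_g, \bar h_g)$ the tuples $(\bar x_g, \bar h_g)$ have no reason to be $O_{k,p}$-indiscernible over $\emptyset$, and a single application of Fact \ref{fac: random hypergraph indiscernibles exist}(2) over $b$ does not create that property, because the hypothesis you would need (that the sequence you feed into the extraction is already $O_{k,p}$-indiscernible over $\emptyset$) is exactly what is missing for the $\bar x_g, \bar h_g$-coordinates. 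You cannot instead apply Fact \ref{fac: random hypergraph indiscernibles exist}(1) over $\emptyset$ first, because that loses the relationship to the already-fixed $b$ (based-on over $\emptyset$ says nothing about $b$). Nor does ``based-on over $b$'' help, since that is a strengthening of ``based-on over $\emptyset$'' and still preserves only indiscernibility properties the original decomposition had.

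The paper sidesteps this cleanly by reversing the quantifier order: it starts from the shattering definition of $k$-IP, so $b$ is \emph{not} fixed in advance. It first extracts an $O_{k,p}$-indiscernible (over $\emptyset$) decomposed sequence $(\bar y_g, \bar m_g)$ via Fact \ref{fac: random hypergraph indiscernibles exist}(1), using type-definability (Proposition \ref{prop: type def}) and compactness to preserve shattering; only \emph{then} does it pick a witness $b$, decompose $b = s(\bar z, \bar\ell)$, and apply Fact \ref{fac: random hypergraph indiscernibles exist}(2) over $\bar z^{\frown}\bar\ell$ (not over $b$). The resulting sequence is $G_{k,p}$-indiscernible over $\bar z^{\frown}\bar\ell$ and remains $O_{k,p}$-indiscernible over $\emptyset$ precisely because it is based on an already-$O_{k,p}$-indiscernible sequence. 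A further related gap in your write-up is that you extract over $b$ rather than over $\bar z^{\frown}\bar k$: indiscernibility over $b$ does not automatically give indiscernibility over the pieces of $b$'s decomposition (those pieces are not $\emptyset$-definable from $b$), whereas the paper arranges to work over $\bar z^{\frown}\bar\ell$ directly, which is what the application of Fact \ref{fac: char of NIP_k by preserving indisc} in $\Gamma(G)$ over $\bar z^\nu$ actually requires. The upshot is that the ``additional refinement step'' you propose is not a local patch; it amounts to restructuring the argument along the lines of the paper's, with the decompose/extract-$O_{k,p}$/find-$b$/decompose-$b$/extract-$G_{k,p}$-over-$\bar z^\frown\bar\ell$ order.
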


\begin{proof}
Let $G \models \Th(G(C))$ be a saturated model, let $X$ be a transversal, and  let $H$ be a set in $Z(G)$ which is linearly independent over $G'$  such that $G = \langle X \rangle \times \langle H \rangle$. Moreover, fix $\kappa$ to be $\aleph_0^+$.

As in the NIP case, if $\Th(G(C))$ is $k$-dependent, then $\Th(C)$ is also $k$-dependent as $C$ is interpretable in $G(C)$.

Now suppose that $\Th(C)$ is $k$-dependent but $\Th(G(C))$ has the $k$-independence property witnessed by the formula  $\varphi\left(x;y_{0},\ldots,y_{k-1}\right) \in L_G$.
By compactness we can find a sequence $\left(a_{0,\alpha},\ldots,a_{k-1,\alpha}\right)_{\alpha \in \kappa}$ such that for any $s\subseteq \kappa^{k}$ there is some $b_{s}$ such
that 
\[
\models\phi\left(b_{s};a_{0,\alpha_{0}},\ldots,a_{k-1,\alpha_{k-1}}\right)\Leftrightarrow\left(\alpha_{0},\ldots,\alpha_{k-1}\right)\in s\mbox{.}
\]
By the choice of $X$ and $H$, for each $i < k$ and $\alpha \in \kappa$, there is some term $t_{i, \alpha} \in L_G$ and some finite tuples $\bar{x}_{i, \alpha}$ from $X$ and $\bar{h}_{i, \alpha}$ from $H$ such that $a_{i, \alpha} = t_{i,\alpha}(\bar{x}_{i, \alpha}, \bar{h}_{i,\alpha})$. 
As $\kappa > |L_G| + \aleph_0$, passing to a subsequence of length $\kappa$ for each $i<k$ we may assume that $t_{i, \alpha} = t_i$ and $\bar{x}_{i, \alpha} = \bar{x}_{i, \alpha}^{\nu \frown} \bar{x}_{i, \alpha}^{p \frown} \bar{x}_{i, \alpha}^{\iota}$ with $\bar{x}_{i, \alpha}^{\nu}, \bar{x}_{i, \alpha}^{p}, \bar{x}_{i, \alpha}^{\iota}$ listing all elements of the corresponding type in $\bar{x}_{i, \alpha}$ and $|\bar{x}_{i, \alpha}^{\nu}|, |\bar{x}_{i, \alpha}^{p}|, |\bar{x}_{i, \alpha}^{\iota}|$ constant for all $i<j$ and $\alpha \in \kappa$. 
Moreover, in the same way as in the NIP case, we add the handles of the elements in the tuple $\bar x_{i,\alpha}^p$ to the beginning of $\bar{x}_{i, \alpha}^{\nu}$. Taking $\psi(x; y'_0, \ldots, y'_{k-1}) := \phi(x; t_0(y'_0), \ldots, t_{k-1}(y'_{k-1}))$, we see that the sequence $(\bar{x}^{\frown}_{0,\alpha} \bar{h}_{0,\alpha}, \ldots,  \bar{x}^{\frown}_{k-1,\alpha} \bar{h}_{k-1,\alpha}: \alpha \in \kappa)$ is shattered by $\psi$, i.\ e.\ for each $A \subset \kappa^k $ there is some $\bar b$ such that $G \models \psi(\bar{b}; \bar{x}_{0, i_0}^\frown \bar{h}_{0, i_0}, \ldots, \bar{x}_{k-1, i_{k-1}}^\frown \bar{h}_{k-1, i_{k-1}})$ if and only if $  (i_0, \dots, i_{k-1}) \in A$. 
We define an $L_{\op}$-structure on $\kappa$ by interpreting each of the $P_i, i < k$ as some countable disjoint subsets of $\kappa$, and choosing any ordering isomorphic to $(\mathbb{Q}, <)$ on each of the $P_i$'s. 
We pass to the corresponding subsequences of $(\bar x_{i,\alpha}^{\frown} \bar{h}_{i, \alpha}: \alpha \in \kappa)$, namely for each $i \in k$, we consider the sequence given by $(\bar x_{i,\alpha}^{\frown} \bar{h}_{i, \alpha}: \alpha \in P_i)$.
%
%
Taking these $k$ different sequences together we obtain the sequence $(\bar{x}_g^\frown \bar{h}_g : g \in O_{k,p})$ indexed by $O_{k,p}$. This sequence is shattered in the following sense: for each $A \subset P_0 \times \dots \times P_{k-1} $ there is some $\bar b \in G$ such that $G \models \psi(\bar{b}; \bar{x}_{g_0}^\frown \bar{h}_{g_0}, \ldots, {\bar{x}_{g_{k-1}}}^{\frown} \bar{h}_{g_{k-1}})$ if and only if $  (g_0, \dots, g_{k-1}) \in A$. 


By Fact \ref{fac: random hypergraph indiscernibles exist}(1), let $(\bar{y}_{g}^{\frown} \bar{m}_g : g \in O_{k,p})$ be an $O_{k,p}$-indiscernible in $G$ based on $(\bar{x}_g^\frown \bar{h}_g : g \in O_{k,p})$. Observe that, using Proposition \ref{prop: type def} as in the proof of Theorem \ref{thm: NIP}, we still have:
\begin{enumerate}
\item $(\bar{y}_{g}^{\frown} \bar{m}_g : g \in O_{k,p})$ is shattered by $\psi$,
\item the handle for each $j$th element in the tuple $\bar y_{g}^p$ is the $j$th element of the tuple $\bar y_{g}^\nu$,
\item the set of all elements of $G$ appearing in $(\bar{y}_g : g \in O_{k,p})$ is a partial transversal, hence can be extended to a transversal $Y$ of $G$,
\item the set of all elements of $G$ appearing in $(\bar{m}_g : g \in O_{k,p})$ is still a set of elements in $Z(G)$ linearly independent over $G'$, hence can be extended to a linearly independent set $M$ such that $G = \langle Y \rangle \times \langle M \rangle$.
\end{enumerate}
We can expand $O_{k,p}$ to $G_{k,p}$ (see Fact \ref{fac: Gnp}). As $(\bar{y}_{g}^{\frown} \bar{m}_g : g \in O_{k,p})$ is shattered by $\psi$, we can find an element $b \in G$ such that $G \models \psi(b; \bar{y}_{g_0}^{\frown}  \bar{m}_{g_0}, \ldots, \bar{y}_{g_{k-1}}^{\frown} \bar{m}_{g_{k-1}}) \iff G_{k,p} \models R(g_0, \ldots, g_{k-1})$, for all $g_{i}\in P_{i}$. We can write $b = s(\bar{z}, \bar{\ell})$ for some term $s \in L_G$ and some finite tuples $\bar{z} = \bar{z}^{\nu \frown} \bar{z}^{p \frown} \bar{z}^\iota$ in $Y$ and $\bar{\ell}$ in $M$. As usual, extending $\bar{z}^\nu$ if necessary, we may assume that $\bar{z}$ is closed under handles. Taking $\theta(x'; y'_0, \ldots, y'_{k-1}) := \psi(s(x'); y'_0, \ldots, y'_{k-1})$, we still have that $$G \models \theta(\bar{z}^{\frown}\bar{\ell}; \bar{y}_{g_0}^{\frown}  \bar{m}_{g_0}, \ldots, \bar{y}_{g_{k-1}}^{\frown} \bar{m}_{g_{k-1}}) \iff G_{k,p} \models R(g_0, \ldots, g_{k-1})$$ for all $g_{i}\in P_{i}$.

By Fact \ref{fac: random hypergraph indiscernibles exist}(2), we can find $(\bar{z}_g^{\frown} \bar{\ell}_g : g \in G_{k,p})$ which is $G_{k,p}$-indiscernible over $\bar{z}^{\frown}\bar{\ell}$ and is based on $(\bar{y}_{g}^{\frown} \bar{m}_g : g \in G_{k,p})$ over $\bar{z}^{\frown}\bar{\ell}$. Then   we have:
\begin{enumerate}
\item $G \models \theta(\bar{z}^{\frown}\bar{\ell}; \bar{z}_{g_0}^{\frown}  \bar{\ell}_{g_0}, \ldots, \bar{z}_{g_{k-1}}^{\frown} \bar{\ell}_{g_{k-1}}) \iff G_{k,p} \models R(g_0, \ldots, g_{k-1})$, for all $g_{i}\in P_{i}$;
\item for $\bar{z}_g = \bar{z}_g^{\nu \frown} \bar{z}_g^{p \frown} \bar{z}_g^{\iota}$ we have that:
\begin{itemize}
	\item all of these tuples are of fixed length and list elements of the corresponding type,
	\item  the handle of the $j$-th element of  $\bar{z}_g^p$ is the $j$-th element of  $\bar{z}_g^\nu$;
	\end{itemize}
\item the set of all elements of $G$ appearing in $\bar{z}$ and $(\bar{z}_g : g \in G_{k,p})$ is a partial transversal, hence can be extended to some transversal $Z$ of $G$;
\item the set of all elements of $G$ appearing in $\bar{\ell}$ and $(\bar{\ell}_g : g \in G_{k,p})$ is still a set of elements in $Z(G)$ linearly independent over $G'$, hence can be extended to a linearly independent set $L$ such that $G = \langle Z \rangle \times \langle L \rangle$;
\item $(\bar{z}_g^{\frown} \bar{\ell}_g : g \in G_{k,p})$ is $O_{k,p}$-indiscernible over $\emptyset$ (follows since $(\bar{z}_g^{\frown} \bar{\ell}_g : g \in G_{k,p})$ is based on $(\bar{y}_{g}^{\frown} \bar{m}_g : g \in G_{k,p})$, which was $O_{k,p}$-indiscernible, as in the proof of \cite[Lemma 6.2]{chernikov2014n}).
\end{enumerate}

Consider now all of the elements in $\bar{z}^\nu$ and $(\bar{z}_{g}^{\nu} : g \in G_{k,p})$ as elements in $\Gamma(G)$, a saturated model of $\Th(C)$, and note that as $\Gamma(G)$ is interpretable in $G$, we have that the sequence $(\bar{z}^{\nu}_g : g \in G_{k,p})$ is also $G_{k,p}$-indiscernible over $\bar{z}^{\nu}$ and is $O_{k,p}$-indiscernible over $\emptyset$, both in $\Gamma(G)$. As $\Th(C)$ is $k$-dependent, it follows by Fact \ref{fac: char of NIP_k by preserving indisc} that $(\bar{z}_{g}^{\nu} : g \in G_{k,p})$ is $O_{k,p}$-indiscernible over $\bar{z}^\nu$ in $\Gamma(G)$. 
Hence for any finite tuples $g_0, \ldots, g_n, q_0, \ldots, q_n \in G_{k,p}$ such that $\tp_{L_{\op}^k}(\bar{g})  = \tp_{L_{\op}^k}(\bar{q})$, we have that  $\tp_{\Gamma}(\bar{z}^\nu_{g_0}, \ldots, \bar{z}^\nu_{g_n}/ \bar{z}^\nu )$ is equal to $\tp_{\Gamma}(\bar{z}^\nu_{q_0}, \ldots, \bar{z}^\nu_{q_n}/ \bar{z}^\nu)$. 
Now, using  $O_{k,p}$-indiscernibility and that $\bar z$ is finite, for each $i < k$ there is some finite $\lambda_i \subseteq  P_i $ such that for all $g \neq q \in P_i$ with $g,q > \lambda_i$ (i.e. $g > h$ for every element $h \in \lambda_i$, and the same for $q$) we have
$$ \bar{z}_{g}^p  \cap \bar{z}^p = \bar{z}_{q}^p \cap \bar{z}^p, \bar{z}_{g}^\iota \cap \bar{z}^\iota = \bar{z}_{q}^\iota\cap \bar{z}^\iota\ \mbox{(as tuples)}
$$
and $\bar{z}_{g} \cap \bar{z}_{q}$ is constant. Thus, for any $g_0, \dots ,g_{k-1}, q_0, \dots, q_{k-1}$  such that  $g_i, q_i > \lambda_i$ and $g_i, q_i \in P_i$, we get that mapping $\bar{z}_{g_0}, \dots,\bar{z}_{g_{k-1}},  \bar{z}$ to  $\bar{z}_{q_0}, \dots,\bar{z}_{q_{k-1}},   \bar{z}$ preserving the positions of the elements in the tuples defines a bijection $\sigma_{\bar g,\bar q}$ 
such that:
\begin{enumerate}
\item $\tp_{\Gamma}(\bar{z}_{g_0}^\nu , \dots,\bar{z}_{g_{k-1}}^\nu ,\bar{z}^\nu) = \tp_{\Gamma}(\sigma_{\bar g,\bar q}(\bar{z}_{g_0}^\nu , \dots,\bar{z}_{g_{k-1}}^\nu, \bar{z}^\nu)),$
\item the map $\sigma_{\bar g,\bar q}$ fixes $\bar{z}$,
\item the map $\sigma_{\bar g,\bar q}$ respects the $1^{\nu}$-, $p$- and $1^\iota$-parts and the handles.
\end{enumerate}


Next we consider all the elements in $\bar{\ell}$ and $(\bar{\ell}_g : g \in G_{k,p})$ as elements in $\langle L \rangle$, a saturated model of the stable theory $\Th(\langle L \rangle)$. By quantifier elimination, we still have that $(\bar{\ell}_g : g \in G_{k,p})$ is both $O_{k,p}$-indiscernible and $G_{k,p}$-indiscernible over $\bar{\ell}$ in $\langle L \rangle$. As $\langle L \rangle$ is stable, so in particular $k$-dependent, by Fact \ref{fac: char of NIP_k by preserving indisc}, $(\bar{\ell}_g : g \in G_{k,p})$ is  $O_{k,p}$-indiscernible over $\bar{\ell}$.

Now let $\bar{g}, \bar{q} \in G_{k,p}$ be such that $g_i, q_i > \lambda_i$ and $g_i,q_i \in P_i$ for all $i < k$, and such that $G_{k,p} \models R(g_0, \ldots, g_{k-1}) \land \neg R(q_0, \ldots, q_{k-1})$ holds. Then by the choice of $\bar{z}^{\frown} \bar{\ell}$ we have that $G \models \theta(\bar{z}^{\frown}\bar{\ell}; \bar{z}_{g_0}^{\frown} \bar{\ell}_{g_0}, \ldots, \bar{z}_{g_{k-1}}^{\frown} \bar{\ell}_{g_{k-1}}) \land \neg \theta(\bar{z}^{\frown}\bar{\ell}; \bar{z}_{q_0}^{\frown}  \bar{\ell}_{q_0}, \ldots, \bar{z}_{q_{k-1}}^{\frown} \bar{\ell}_{q_{k-1}})$. On the other hand, combining the last two paragraphs and using Lemma \ref{Lem_GlueAut}, 
we find an automorphism of $G$ sending $(\bar{z}_{g_0}^\frown  \bar{\ell}_{g_0}, \ldots, \bar{z}_{g_{k-1}}^\frown \bar{\ell}_{g_{k-1}})$ to $(\bar{z}_{q_0}^\frown  \bar{\ell}_{q_0}, \ldots, \bar{z}_{q_{k-1}}^\frown  \bar{\ell}_{q_{k-1}})$ and fixing $\bar{z}^{\frown}\bar{\ell}$ --- a contradiction.


\end{proof}

\begin{cor} \label{cor: strictly k-dep groups}

For every $k \geq 2$, there is a strictly $k$-dependent  pure group $G$. Moreover, we can find such a $G$ with a simple theory.
\end{cor}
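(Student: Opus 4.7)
The plan is to apply Theorem \ref{thm: k-dependence is preserved} to a carefully chosen nice graph. Start from the random $k$-hypergraph $H_k$, whose theory is well-known to be simple and strictly $k$-dependent (it is $k$-dependent by a direct counting of types, and the $k$-hyperedge relation $R(y_0,\ldots,y_{k-1})$ itself witnesses that it fails to be $(k-1)$-dependent). Since $H_k$ is in a finite relational language, we may use the standard Mekler--Hodges encoding of a finite relational structure as a nice graph (see \cite[A.3]{hodges1993model}) to produce a nice graph $C_k$ in which $H_k$ is interpretable and which, conversely, is interpretable from $H_k$ in a way that transfers tameness properties in both directions. In particular, $\Th(C_k)$ remains simple and strictly $k$-dependent.

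With $C_k$ in hand, Theorem \ref{thm: k-dependence is preserved} yields that $\Th(G(C_k))$ is $k$-dependent. Conversely, since $C_k$ is interpretable in $G(C_k)$ (as recorded in the fact preceding Definition \ref{def: cover}), and $C_k$ interprets $H_k$, the random $k$-hypergraph $H_k$ is interpretable in $G(C_k)$; because $\Th(H_k)$ is not $(k-1)$-dependent, neither is $\Th(G(C_k))$. Thus $G(C_k)$ is a pure group whose theory is strictly $k$-dependent.

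For the "moreover" clause, we invoke the result of Baudisch and Pentzel cited in the introduction (\cite{baudisch2002mekler}): Mekler's construction preserves simplicity. Since $\Th(C_k)$ is simple, so is $\Th(G(C_k))$, yielding the desired pure group whose theory is simple and strictly $k$-dependent.

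The main point requiring care is the preservation of strict $k$-dependence and of simplicity under the Mekler--Hodges coding of $H_k$ into a nice graph $C_k$. This is not entirely trivial because, although the encoding is close to a bi-interpretation, one must verify that it transfers both $k$-dependence (upwards from $C_k$ to $H_k$ via interpretability, which is automatic) and failure of $(k-1)$-dependence (downwards, which uses that $H_k$ is interpretable in $C_k$). Once this encoding step is in place, the corollary is a direct combination of Theorem \ref{thm: k-dependence is preserved} with the Baudisch--Pentzel preservation of simplicity.
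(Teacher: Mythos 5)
Your proof is correct and follows essentially the same route as the paper: start from the random $k$-hypergraph, encode it as a nice graph via the Hodges coding (the paper cites \cite[Theorem 5.5.1 + Exercise 5.5.9]{hodges1993model} for the fact that this is a genuine bi-interpretation, so no extra care is actually needed for the downward transfer of non-$(k-1)$-dependence), and then apply Theorem \ref{thm: k-dependence is preserved} together with Baudisch--Pentzel's preservation of simplicity. The only cosmetic difference is that you derive non-$(k-1)$-dependence of $G(C_k)$ by chaining interpretations $G(C_k)\rightsquigarrow C_k\rightsquigarrow H_k$, whereas the paper uses the ``if and only if'' in Theorem \ref{thm: k-dependence is preserved} directly; both are fine.
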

\begin{proof}
For each $k \geq 2$, let $A_k$ be the random $k$-hypergraph. It is well-known that $\Th(A_k)$ is simple. Moreover, $A_{k}$ is clearly not $(k-1)$-dependent, as witnessed by the edge relation, and it is easy to verify that $A_{k}$ is $k$-dependent (as it eliminates quantifiers and all relation symbols are at most $k$-ary, see e.g. \cite[Proposition 6.5]{chernikov2014n}).

Now $A_k$, as well as any other structure in a finite relational language, is bi-interpretable with some nice graph $C_k$ by \cite[Theorem 5.5.1 + Exercise 5.5.9]{hodges1993model}, so $C_k$ also has all of the aforementioned properties. Then Mekler's construction produces a group $G(C_k)$ with all of the desired properties, by Theorem \ref{thm: k-dependence is preserved} and  preservation of simplicity from \cite{baudisch2002mekler}. 
\end{proof}

This corollary gives first examples of strictly $k$-dependent groups, however many other questions about the existence of strictly $k$-dependent algebraic structures remain.

\begin{problem}
\begin{enumerate}
\item Are there pseudofinite strictly $k$-dependent groups, for $k>2$? 

\item Are there $\aleph_0$-categorical strictly $k$-dependent groups, for $k>2$? 

\end{enumerate}
We note that the strictly $2$-dependent group in Example \ref{ex: extraspecial} is  both pseudofinite and $\aleph_0$-categorical (see \cite[Proposition 3.11]{macpherson2008one} and the discussion around it). However, Mekler's construction does not preserve $\aleph_0$-categoricity in general (this is mentioned in \cite[Introduction]{baudisch2002mekler}), e.g. because the proof in Remark \ref{rem: unbounded commutator products} shows that if $C$ is an infinite nice graph, then in $G(C)$ there are infinitely many pairwise inequivalent formulas $\phi_n(x)$ expressing that $x$ is a product of at most $n$ commutators.
\end{problem}

\begin{problem}
Are there strictly $k$-dependent fields, for any $k \geq 2$? We conjecture that there aren't any with a simple theory. It is proved in \cite{hempel2016n} that any $k$-dependent PAC field is separably closed, and there are no known examples of fields with a simple theory which are not PAC.
\end{problem}

\section{Preservation of $\NTP_2$} \label{sec: NTP2}
We recall the definition of $\NTP_2$ (and refer to \cite{chernikov2014theories} for further details).
\begin{defn} \label{def: NTP2}
\begin{enumerate}
\item A formula $\phi(x,y)$, with $x,y$ tuples of variables, has $\TP_2$ if there is an array $( a_{i,j} : i,j \in \omega)$ of tuples in $\M \models T$ and some $k \in \omega$ such that:
\begin{enumerate}
\item for all $i \in \omega$, the set $\{ \phi(x, a_{i,j}) : j \in \omega \}$ is $k$-inconsistent.
\item for all $f : \omega \to \omega$, the set $\{ \phi(x, a_{i, f(i)}) : i \in \omega \}$ is consistent.
\end{enumerate}

\item A theory $T$ is $\NTP_2$ if no formula has $\TP_2$ relatively to it.
\end{enumerate}
\end{defn}

\begin{remark} \label{rem : 2-incons}\cite[Lemma  3.2]{chernikov2014theories}
If $T$ is not $\NTP_2$, one can find a formula as in Definition \ref{def: NTP2}(1) with $k = 2$.
\end{remark}

We will use the following formula-free characterization of $\NTP_2$ from \cite[Section 1]{chernikov2014theories}.

\begin{fact} \label{fac: char of NTP2} Let $T$ be a theory and $\M \models T$ a monster model. Let $\kappa := |T|^+$. The following are equivalent:
\begin{enumerate}
\item $T$ is $\NTP_2$.
\item For any array $(a_{i,j} : i \in \kappa, j \in \omega)$ of finite tuples with \emph{mutually indiscernible rows} (i.e. for each $i \in \kappa$, the sequence $\bar{a}_i := (a_{i,j} : j \in \omega)$ is indiscernible over $\{ a_{i',j} : i' \in \kappa \setminus \{i \}, j \in \omega \}$) and a finite tuple $b$, there is some $\alpha \in \kappa $ satisfying the following:
for any $i > \alpha$ there is some  $b'$ such that $\tp(b/ a_{i,0}) = \tp(b' / a_{i,0})$ and $\bar{a}_i$ is indiscernible over $b'$.
\end{enumerate}

\end{fact}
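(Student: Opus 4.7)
\medskip

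The plan is to prove both implications by contraposition, obtaining from a failure of the combinatorial condition either a witness of $\TP_{2}$ or a failure of the array condition.

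For $(2)\Rightarrow(1)$, the easy direction, suppose $\phi(x,y)$ has $\TP_{2}$. By Remark~\ref{rem : 2-incons} I can pick a witnessing array with $2$-inconsistent rows, and by a standard Ramsey-style extraction I can further assume the rows are mutually indiscernible and indexed by $\kappa\times\omega$ (while still witnessing $\TP_{2}$ with $k=2$). Pick $b$ realizing the consistent path $\{\phi(x,a_{i,0}):i\in\kappa\}$. For any $i\in\kappa$ and any $b'\equiv_{a_{i,0}} b$, we have $\models\phi(b',a_{i,0})$; if $\bar a_{i}$ were indiscernible over such a $b'$ we would also get $\models\phi(b',a_{i,1})$, contradicting $2$-inconsistency of row $i$. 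Hence no bound $\alpha$ works and $(2)$ fails.

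For $(1)\Rightarrow(2)$, suppose $(2)$ fails, witnessed by a mutually indiscernible array $(a_{i,j}:i\in\kappa,\,j\in\omega)$ and a finite tuple $b$, so the set
\[
S \;:=\; \bigl\{\,i\in\kappa : \text{no }b'\equiv_{a_{i,0}} b\ \text{makes }\bar a_{i}\text{ indiscernible over }b'\,\bigr\}
\]
is cofinal in $\kappa$, hence of cardinality $\kappa$ by regularity. The non-existence of such a $b'$ over $a_{i,0}$, by a compactness argument in the monster model, is equivalent to the following: some formula $\phi_{i}(x;y_{0},\bar y)\in L$ and some tuple $\bar a_{i,\bar j}$ from $\bar a_{i}$ satisfy $\models\phi_{i}(b;a_{i,0},\bar a_{i,\bar j})$ while $\{\phi_{i}(x;a_{i,0},\bar a_{i,\bar k}):\bar k\}$ ranging over increasing tuples from $\omega$ is inconsistent (in fact already $2$-inconsistent after suitable rearrangement, exploiting mutual indiscernibility of the row). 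Since $|S|=\kappa>|T|$, pigeonholing over the formulas of $L$ yields a cofinal $S'\subseteq S$ on which $\phi_{i}$ is a single formula $\phi(x,y)$ (absorbing $a_{i,0}$ into parameters of $\phi$ by a uniform reshuffling using the indiscernibility of each row). Finally, I reassemble the rows $(\bar a_{i})_{i\in S'}$, together with $b$ acting as a consistent ``path,'' into an array witnessing $\TP_{2}$ for $\phi$: consistency along the distinguished column comes from $b$, and inconsistency along each row is exactly the obstruction provided by the $\phi_{i}=\phi$.

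The main obstacle I anticipate is the compactness reformulation in the second direction, namely translating ``no $b'\equiv_{a_{i,0}} b$ exists with $\bar a_{i}$ indiscernible over $b'$'' into a single $L$-formula obstruction that can be uniformized across $i$ and then arranged into a row whose ``shifts'' become inconsistent. This is handled by noting that ``$\tp(b/a_{i,0})\cup\{\text{the indiscernibility scheme of }\bar a_{i}\}$ is inconsistent'' is a type-definable condition, so by compactness a finite sub-scheme fails; together with the homogeneity provided by mutual indiscernibility this yields the desired uniform $\TP_{2}$-pattern.
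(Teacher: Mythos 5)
The paper does not give a proof of this Fact: it is quoted directly from \cite[Section 1]{chernikov2014theories}, so there is no in-paper argument to compare against. I can, however, evaluate your proposal on its own.

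Your proof of $(2)\Rightarrow(1)$ is correct: extracting a mutually indiscernible $\TP_2$-array with $2$-inconsistent rows, taking $b$ realizing $\{\phi(x,a_{i,0}):i<\kappa\}$, and noting that any $b'\equiv_{a_{i,0}}b$ already satisfies $\phi(b',a_{i,0})$, so indiscernibility of $\bar a_i$ over $b'$ would force $\phi(b',a_{i,1})$, contradicting $2$-inconsistency.

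The direction $(1)\Rightarrow(2)$ has a genuine gap, and it is exactly at the step you flag as the ``main obstacle.'' The claimed equivalence between ``no $b'\equiv_{a_{i,0}}b$ makes $\bar a_i$ indiscernible over $b'$'' and ``there is $\phi_i$ and $\bar j$ with $\models\phi_i(b;a_{i,0},\bar a_{i,\bar j})$ such that $\{\phi_i(x;a_{i,0},\bar a_{i,\bar k}):\bar k\}$ is inconsistent (even $2$-inconsistent after rearrangement)'' does not follow from compactness. What compactness gives you from the inconsistency of $\tp(b/a_{i,0})\cup\operatorname{Ind}(\bar a_i/x)$ is a formula $\theta(x,a_{i,0})\in\tp(b/a_{i,0})$ and a finite conjunction $\Xi(x,a_{i,\le N})$ of \emph{biconditionals} $\psi_l(x,\bar a_{\bar s_l})\leftrightarrow\psi_l(x,\bar a_{\bar t_l})$ such that $\theta\wedge\Xi$ is inconsistent. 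This is an implication $\theta(x,a_{i,0})\to\neg\Xi(x,a_{i,\le N})$, valid for \emph{all} $x$, and the alternation $\psi_l(x,\bar a_{\bar s_l})\wedge\neg\psi_l(x,\bar a_{\bar t_l})$ that $b$ witnesses is itself a consequence of $\theta$. If you try to package $\theta\wedge\neg\Xi$ into the row formula, then along the row each instance is simply equivalent to $\theta(x,a_{i,0})$ (a fixed parameter), which is satisfiable, so you get no $k$-inconsistency at all; if you instead try to use the bare alternation $\psi(x,\bar y)\wedge\neg\psi(x,\bar z)$ along shifted chunks, the set of shifted instances need not be $k$-inconsistent for any $k$ in a general non-NIP theory, since nothing bounds the alternation rank. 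So neither reading of your compactness reformulation produces the $k$-inconsistent row needed for a $\TP_2$-pattern, and the ``$2$-inconsistent after suitable rearrangement'' claim is unjustified. A correct proof of $(1)\Rightarrow(2)$ requires a genuinely different route (as in the cited source), for instance passing through the characterization of $\NTP_2$ via inp-patterns/burden and a more careful use of Ramsey extraction inside each row to separate the ``initial'' from the ``eventual'' behavior of $b$ over $\bar a_i$, rather than a direct formula-level compactness argument.
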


The following can be proved using finitary Ramsey theorem and compactness, see \cite[Section 1]{chernikov2014theories} for the details.

\begin{fact}\label{fac: extracting mut ind}

Let $(a_{\alpha,i} : \alpha,i \in \kappa)$ be an array of tuples from $\M \models T$.
Then there is an array $(b_{\alpha,i} : \alpha, i \in \kappa)$ with mutually indiscernible rows \emph{based on $(a_{\alpha,i} : \alpha,i \in \kappa)$}, i.e. such that for every finite set of formulas $\Delta$, any $\alpha_0, \ldots, \alpha_{n-1} \in \kappa$ and any strictly increasing finite tuples $\bar{j}_0, \ldots, \bar{j}_{n-1}$ from $\kappa$, there are some strictly increasing tuples $\bar{i}_0, \ldots, \bar{i}_{n-1}$ from $\kappa$ such that 
$$\models \Delta( (b_{\alpha_0,i} : i \in \bar{j}_0), \ldots, (b_{\alpha_{n-1}, i} : i \in \bar{j}_{n-1}) ) \iff$$
$$\models \Delta( (a_{\alpha_0,i} : i \in \bar{i}_0), \ldots, (a_{\alpha_{n-1}, i} : i \in \bar{i}_{n-1}) ).$$
\end{fact}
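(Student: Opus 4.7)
The plan is to realize the desired array by a compactness argument in a monster model $\M \models T$. Consider the partial type $\Sigma\bigl((y_{\alpha,i})_{\alpha,i \in \kappa}\bigr)$, where each $y_{\alpha,i}$ has the length of $a_{\alpha,i}$, consisting of two families of formulas:
(M) for each $\alpha \in \kappa$, each finite $\Delta \subseteq L$, each pair of strictly increasing tuples $\bar{j}, \bar{j}'$ from $\kappa$ of the same length, and each finite selection of variables $(y_{\beta_l,i_l})_{l<m}$ with all $\beta_l \neq \alpha$, the formula
$$\phi\bigl((y_{\alpha,j_s})_s,(y_{\beta_l,i_l})_l\bigr) \leftrightarrow \phi\bigl((y_{\alpha,j'_s})_s,(y_{\beta_l,i_l})_l\bigr)$$
for each $\phi \in \Delta$;
(B) for each finite $\Delta \subseteq L$ and finite collection $(\alpha_l, \bar{j}_l)_{l<n}$ of row labels with associated strictly increasing column tuples, the (finite) disjunction over those $\Delta$-types $q$ realized by $\bigl((a_{\alpha_l, \bar{i}_l})_l\bigr)$ for some strictly increasing $\bar{i}_l$ in $\kappa$, asserting that $\bigl((y_{\alpha_l, \bar{j}_l})_l\bigr)$ realizes $q$. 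A realization $(b_{\alpha,i})$ of $\Sigma$ in $\M$ is exactly what we want: (M) expresses mutual indiscernibility of rows, while (B) expresses that $(b_{\alpha,i})$ is based on $(a_{\alpha,i})$.

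The main task is to prove finite satisfiability of $\Sigma$. A finite fragment refers only to finitely many rows $\alpha_0, \ldots, \alpha_{n-1}$, a finite $\Delta$, and some bound $N$ on tuple lengths and column indices. Thus it suffices to prove the finitary Ramsey-type claim: for every $n, N \in \omega$ and finite $\Delta \subseteq L$, there is $f = f(n,N,\Delta) \in \omega$ such that any $n \times f$ array $(c_{j,i})_{j<n,\, i<f}$ admits column indices $\iota_{j,0}<\cdots<\iota_{j,N-1}$ in $\{0, \ldots, f-1\}$ for each $j<n$, such that for each $j<n$ the sub-row $(c_{j,\iota_{j,l}})_{l<N}$ is $\Delta$-indiscernible over $(c_{j',\iota_{j',l}})_{l<N,\, j' \neq j}$. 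Applied to the restriction $(a_{\alpha_j,i})_{j<n,\, i<f}$ of the original (using $\kappa \geq f$), this produces an $n \times N$ sub-array witnessing the given fragment: (M) holds by construction, while (B) is trivial since the witness is a literal sub-array of $(a_{\alpha,i})$.

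The Ramsey claim is proved by iterated application of the finite Ramsey theorem. Processing the rows in order $j = 0, 1, \ldots, n-1$, at step $j$ color each strictly increasing $N$-tuple from the current version of row $j$ by the tuple of $\Delta$-types it realizes over all $\leq N$-tuples drawn from rows $j' \neq j$ in their current state; this takes values in a finite set, so finite Ramsey yields a sufficiently long sub-sequence of row $j$ on which the color is constant, giving $\Delta$-indiscernibility of row $j$ over all $\leq N$-tuples of the remaining rows in their current state. The subtle point, and the main step requiring care, is that later stages will further shorten the rows $j' > j$, but the indiscernibility of row $j$ established at step $j$ is automatically preserved, because the $\leq N$-tuples of a sub-sequence form a subset of the original, and $\Delta$-indiscernibility over a parameter set descends to $\Delta$-indiscernibility over any subset of it. Tracking the Ramsey numbers through the $n$ stages yields the required bound $f$, and compactness then produces $(b_{\alpha,i})$.
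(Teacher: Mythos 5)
The paper does not prove this fact; it cites \cite[Section 1]{chernikov2014theories} and notes only that the argument is by the finitary Ramsey theorem plus compactness, which is exactly the method you reconstruct. Your outline is correct: the one point left loose is the definition of the bound $f$ in the finitary Ramsey claim, since the number of colors used to treat row $j$ depends on the (not yet shortened) lengths of rows $j+1,\dots,n-1$, so the row lengths must be set up by a backward recursion $L_{n-1},\dots,L_0$ with $f=L_0$; but you correctly identify the key monotonicity observation (shrinking the parameter rows can only preserve $\Delta$-indiscernibility), so this is routine bookkeeping rather than a gap.
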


\begin{remark} \label{rem: extraction preserves witness TP2}
If $\phi(x,y)$ and $(a_{\alpha,i} : \alpha,i \in \kappa)$ satisfy the condition in Definition \ref{def: NTP2}(1) and $(b_{\alpha,i} : \alpha, i \in \kappa)$ is based on it, then $\phi(x,y)$ and $(b_{\alpha,i} : \alpha, i \in \kappa)$ also satisfy the condition in Definition \ref{def: NTP2}(1).
\end{remark}

\begin{theorem}\label{thm: NTP2 is preserved}
For any nice graph $C$, we have that $\Th(G(C))$ is $\NTP_2$ if and only if $\Th(C)$ is $\NTP_2$.
\end{theorem}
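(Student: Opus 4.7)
The plan is to mirror the strategy used in Sections~\ref{sec: NIP} and~\ref{sec: kDep}, reducing $\NTP_2$ of $\Th(G(C))$ to $\NTP_2$ of $\Th(\Gamma(G))$ and stability of $\Th(\langle H \rangle)$ via the transversal decomposition, but now using the array characterization of Fact~\ref{fac: char of NTP2}. One direction is immediate: since $C$ is interpretable in $G(C)$, if $\Th(G(C))$ is $\NTP_2$ then so is $\Th(C)$. For the converse, we assume $\Th(C)$ is $\NTP_2$ and, aiming at a contradiction, that $\Th(G(C))$ has $\TP_2$. By Remark~\ref{rem : 2-incons}, we may take a witnessing formula $\phi(x,y)$ and an array $(a_{\alpha,i} : \alpha, i \in \omega)$ with $2$-inconsistent rows and consistent ``vertical'' paths.

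First I would fix a saturated $G \models \Th(G(C))$, a transversal $X$ and a linearly independent set $H \subseteq Z(G)$ over $G'$ with $G = \langle X \rangle \times \langle H \rangle$, and write $a_{\alpha,i} = t_{\alpha,i}(\bar{x}_{\alpha,i}, \bar{h}_{\alpha,i})$ with $\bar{x}_{\alpha,i}$ from $X$ and $\bar{h}_{\alpha,i}$ from $H$. Extending each array to the index set $\kappa := |\Th(G(C))|^+$ by compactness and pigeonholing, I can assume the term $t$, the lengths of $\bar{x}_{\alpha,i}^{\nu}, \bar{x}_{\alpha,i}^{p}, \bar{x}_{\alpha,i}^{\iota}, \bar{h}_{\alpha,i}$ and the position of the handles are uniform, passing to $\psi(x; y', y'') := \phi(x; t(y', y''))$ and recording the $\TP_2$ witness as an array of tuples $\bar{x}_{\alpha,i}^{\frown}\bar{h}_{\alpha,i}$. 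Next, using Fact~\ref{fac: extracting mut ind} together with Remark~\ref{rem: extraction preserves witness TP2}, I extract a new array $(\bar{y}_{\alpha,i}^{\frown}\bar{m}_{\alpha,i} : \alpha, i \in \kappa)$ with mutually indiscernible rows which still witnesses $\TP_2$ for $\psi$. Crucially, by Proposition~\ref{prop: type def} (type-definability of partial transversals plus linear independence over $G'$), the extracted array continues to have its $\bar{y}$-part closed under handles and forming a partial transversal $Y$ of some full transversal, while the $\bar{m}$-part remains linearly independent over $G'$, so that $G = \langle Y \rangle \times \langle M \rangle$ for some linearly independent $M \supseteq \bigcup_{\alpha,i} \bar{m}_{\alpha,i}$.

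Consistency along vertical paths then produces a witness $b \in G$ for the consistency along the first column, which we decompose as $b = s(\bar{z}, \bar{\ell})$ with $\bar{z} \subseteq Y$ closed under handles and $\bar{\ell} \subseteq M$. The heart of the argument is then to find, for suitably large $\alpha$, some $b' \in G$ with $\tp(b/ \bar{y}_{\alpha,0}^{\frown}\bar{m}_{\alpha,0}) = \tp(b'/ \bar{y}_{\alpha,0}^{\frown}\bar{m}_{\alpha,0})$ making the row $\bar{a}_\alpha$ indiscernible, contradicting $2$-inconsistency of that row together with consistency of the first column. I would obtain $b'$ in two pieces: the $1^\nu$-parts $(\bar{y}_{\alpha,i}^\nu : i \in \kappa)$ form a mutually indiscernible array in the interpretable structure $\Gamma(G) \models \Th(C)$, so applying Fact~\ref{fac: char of NTP2} in $\Th(C)$ to the tuple $\bar{z}^\nu$ yields some $\alpha_1 < \kappa$ and, for any $\alpha > \alpha_1$, a tuple $\bar{z}^\nu{}'$ in $\Gamma(G)$ realizing $\tp_\Gamma(\bar{z}^\nu/\bar{y}_{\alpha,0}^\nu)$ and making the row $(\bar{y}_{\alpha,i}^\nu : i \in \kappa)$ indiscernible. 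Simultaneously, $(\bar{m}_{\alpha,i} : i \in \kappa)_\alpha$ is a mutually indiscernible array inside the stable (hence $\NTP_2$) pure abelian group $\langle M \rangle$, so applying Fact~\ref{fac: char of NTP2} there to $\bar{\ell}$ produces $\alpha_2 < \kappa$ and, for any $\alpha > \alpha_2$, a tuple $\bar{\ell}'$ in $\langle M \rangle$ with $\tp_{\langle M \rangle}(\bar{\ell}/\bar{m}_{\alpha,0}) = \tp_{\langle M \rangle}(\bar{\ell}'/\bar{m}_{\alpha,0})$ and with $(\bar{m}_{\alpha,i} : i \in \kappa)$ indiscernible over $\bar{\ell}'$.

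Finally, taking any $\alpha > \max(\alpha_1, \alpha_2)$, I would use Lemma~\ref{Lem_GlueAut} (in its extension setup) to assemble the graph-theoretic witness $\bar{z}^\nu{}'$ and the central witness $\bar{\ell}'$ into an actual tuple $\bar{z}'$ in a transversal of $G$ with the same handle structure as $\bar{z}$, and then set $b' := s(\bar{z}', \bar{\ell}')$. The same lemma guarantees an automorphism of $G$ fixing $\bar{y}_{\alpha,0}^{\frown}\bar{m}_{\alpha,0}$ and sending $b$ to $b'$ (so that $\tp(b/\bar{y}_{\alpha,0}^{\frown}\bar{m}_{\alpha,0}) = \tp(b'/\bar{y}_{\alpha,0}^{\frown}\bar{m}_{\alpha,0})$), and, combined with the two reduct-level indiscernibility statements, an automorphism realizing any desired permutation of the row $\bar{a}_\alpha$ fixing $b'$ — giving indiscernibility of $\bar{a}_\alpha$ over $b'$. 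This contradicts $2$-inconsistency of the row, completing the proof. The main obstacle I expect is the bookkeeping at the gluing stage: one must verify that the handle-closure and partial-transversal conditions for $\bar{z}'$ can be arranged compatibly with the type of $\bar{z}^\nu{}'$ obtained from $\Gamma(G)$, which is precisely what Proposition~\ref{prop: type def} and Lemma~\ref{Lem_GlueAut} are designed to make possible, but keeping the $1^\nu/p/\iota$-parts, the handles, and the independence conditions synchronized throughout the extraction and gluing will be the delicate part.
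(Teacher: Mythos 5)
Your proposal is correct and follows essentially the same route as the paper: transversal decomposition $G = \langle X\rangle \times \langle H\rangle$, extraction of a mutually indiscernible array whose transversal/center structure is preserved by Proposition~\ref{prop: type def}, applying Fact~\ref{fac: char of NTP2} separately in $\Gamma(G)$ (where $\Th(C)$ is $\NTP_2$) and in $\langle M\rangle$ (a stable reduct), and gluing via Lemma~\ref{Lem_GlueAut}. The ``delicate bookkeeping'' you flag is exactly where the paper does real work --- one discards finitely many rows so the array is mutually indiscernible over the intersection with $\bar z\bar\ell$, and one lifts the graph-level bijection on the $1^\nu$-part to the $p$- and $1^\iota$-coordinates by explicitly choosing new type-$p$ representatives with the correct handles, using that the transversal is a $|G|$-cover (Fact~\ref{fac: decomposition}(6)) --- so your worry is well-placed but resolvable precisely as you anticipate.
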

\begin{proof}
	
	As before, let $G \models \Th(G(C))$ be a monster model, let $X$ be a transversal, and  let $H$ be a set in $Z(G)$ which is linearly independent over $G'$ such that $G = \langle X \rangle \times \langle H \rangle$. Moreover, fix $\kappa$ to be $\aleph_0^+$. If $\Th(G(C))$ is $\NTP_2$ then $\Th(C)$ is also $\NTP_2$ as $C$ is interpretable in $G(C)$.

	Now suppose that $\Th(C)$ is $\NTP_2$, but $\Th(G(C))$ has $\TP_2$. By compactness and Remark \ref{rem : 2-incons} we can find some formula $\phi(x,y)$ and an array $(\bar a_{i,j} : i,j \in \kappa)$ of tuples in $G$ witnessing $\TP_2$ as in Definition \ref{def: NTP2}(1) with $k=2$. 
	Then for all $i,j \in \kappa$ we have $\bar a_{i,j} = t_{i,j}(\bar{x}_{i,j}, \bar{h}_{i,j})$ for some terms $t_{i,j} \in L_G$ and some finite tuples $\bar{x}_{i,j}$ from $X$ and $\bar{h}_{i,j}$ from $H$.

As $\kappa > |L_G| + \aleph_0$, passing to a subsequence of each row, and then to a subsequence of the rows, we may assume that $t_{i,j} = t \in L_G$ and $\bar{x}_{i,j} = \bar{x}_{i,j}^{\nu \frown} \bar{x}_{i,j}^{p \frown} \bar{x}_{i,j}^{\iota}$ with $|\bar{x}_{i,j}^\nu|, |\bar{x}_{i,j}^p|, |\bar{x}_{i,j}^\iota|, |\bar{h}_{i,j}|$ constant for all $i,j \in \kappa$. Again as in the NIP case, we add the handles of the elements in the tuple $\bar x_{i,\alpha}^p$ to the beginning of $\bar{x}_{i, \alpha}^{\nu}$ for all $i,j \in \kappa$. Taking $\psi(x,y') := \phi(x, t(y') )$ with $|y'| = |\bar{x}_{i,j}^\frown \bar{h}_{i,j}|$ and $\bar{b}_{i,j} :=  \bar{x}_{i,j}^\frown \bar{h}_{i,j}$, we have that $\psi(x,y') \in L_G$ and the array $(\bar{b}_{i,j} : i,j \in \kappa)$ still satisfy the condition in Definition \ref{def: NTP2}(1) with $k=2$. 

By Fact \ref{fac: extracting mut ind}, let $(\bar{c}_{i,j} : i,j \in \kappa)$ with $\bar{c}_{i,j} = \bar{y}_{i,j}^\frown \bar{m}_{i,j}$ be an array with mutually indiscernible rows based on $(\bar{b}_{i,j} : i,j \in \kappa)$. Then, arguing as in the proofs of Theorems \ref{thm: NIP} and  \ref{thm: k-dependence is preserved} using type-definability of the relevant properties from Proposition \ref{prop: type def} and Remark \ref{rem: extraction preserves witness TP2}, we have:
\begin{enumerate}
\item $\psi(x,y')$ and the array $(\bar{c}_{i,j} : i,j \in \kappa)$ satisfy the condition in Definition \ref{def: NTP2}(1) with $k=2$;
\item For $\bar{y}_{i,j}= \bar{y}_{i,j}^{\nu \frown} \bar{y}_{i,j}^{p \frown} \bar{y}_{i,j}^{\iota}$ we have that:
\begin{itemize}
	\item all of these tuples are of fixed length and list elements of the corresponding type,
	\item  the handle of the $n$-th element of  $\bar{y}_{i,j}^p$ is the $n$-th element of  $\bar{y}_{i,j}^\nu$;
	\end{itemize}
\item the set of all elements of $G$ appearing in $(\bar{y}_{i,j} : i,j \in \kappa)$ is a partial transversal of $G$ and can be extended to a transversal $Y$ of $G$;
\item the set of all elements of $G$ appearing in $(\bar{m}_{i,j} : i,j \in \kappa)$ is a set of elements in $Z(G)$ linearly independent over $G'$, hence can be extended to a set of generators $M$ such that $G = \langle Y \rangle \times \langle M \rangle$.
\end{enumerate}

Let now $\bar{b}$ be a tuple in $G$ such that $G \models \{ \psi(\bar b, \bar{c}_{i,0}) : i \in \kappa\}$. We have that $\bar b =
s(\bar{y}, \bar{m})$ for some term $s \in L_G$ and some finite tuples $\bar{y}$ in $Y$ and $\bar{m}$ in $M$. Let
$\bar{y} = \bar{y}^{\nu \frown} \bar{y}^{p\frown} \bar{y}^{\iota}$, each listing the elements of the corresponding type. In
the same way as for each of the $\bar y_{i,j}$'s, we add the handles of the elements in the tuple $\bar y^p$ to the
beginning of $\bar{y}^{\nu}$ so that the handle of the $n$-th element of $\bar{y}^p$ is the $n$-th element of
$\bar{y}^\nu$.
%
%
Taking $\theta(x',y') := \psi(s(x'), y')$, we still have that $\bar{y}^\frown \bar{m} \models \{ \theta(x', \bar{c}_{i,0}) : i \in \kappa  \}$ and the set of formulas $\{ \theta(x', \bar{c}_{i,j}) : j \in \kappa \}$ is $2$-inconsistent for each $i \in \kappa$. Moreover, after possibly throwing away finitely many rows, we may assume that 
the rows are mutually indiscernible over $\bar{y}^\frown \bar{m} \cap \bigcup \{\bar{c}_{i,0} : i \in \kappa \} $ (if an element of $\bar{y}^\frown \bar{m}$ appears in $\bar{c}_{i,0}$, then the rows of the array $(\bar{c}_{i',j} : i' \in \kappa, i'\neq i, j \in \kappa)$ are mutually indiscernible over it). This implies that if $z \in \bar{y} \cap \bar{y}_{i,0}$ for some $i$ and $z$ is the $n$-th element in the tuple $\bar{y}_{i,0} $, then it is the $n$-th element in any tuple $\bar{y}_{j,0} $ with $j \in \kappa$.

Consider all of the elements in $\bar{y}^{\nu}$ and $(\bar{y}_{i,j}^{\nu} : i,j \in \kappa)$ as elements in $\Gamma(G)$, a saturated model of $\Th(C)$, and note that as $\Gamma(G)$ is interpretable in $G$ we have that the array $(\bar{y}_{i,j}^\nu : i,j \in \kappa)$ has mutually indiscernible rows in $\Gamma(G)$. As $\Th(\Gamma(G))$ is $\NTP_2$, it follows by Fact \ref{fac: char of NTP2}  that there is some $\alpha \in \kappa$ such that for each $i> \alpha$ there is some tuple $\bar{y}^{\prime \nu}$ such that $\tp_{\Gamma}(\bar{y}^{\nu}/ \bar{y}^{\nu}_{i,0}) = \tp_{\Gamma}(\bar{y}^{\prime \nu} / \bar{y}^{\nu}_{i,0})$ and the sequence $(\bar{y}_{i,j}^{\nu} : j \in \kappa)$ is $L_{\Gamma}$-indiscernible over $\bar{y}^{\prime \nu}$, i.\ e.\  $\tp_{\Gamma}(\bar{y}^{\nu}, \bar{y}^{\nu}_{i,0}) = \tp_{\Gamma}(\bar{y}^{\prime \nu}, \bar{y}^{\nu}_{i,0})$.
Let   $\sigma_0$ be the bijection which maps   $ \bar{y}^{\nu \frown} \bar{y}_{i,0}$ to $\bar{y}^{\prime \nu \frown} \bar{y}_{i,0}$.
Now we want to extend this bijection $\sigma_0$ to a bijection $\sigma$ defined on each element of  $\bar y^{\frown} \bar{y}_{i,0}$ in a type and handle preserving way. To do so, we have to choose an image for each element in $\bar y^{p^\frown} \bar y^\iota$. Let $z$ be the $n$-th element of $ \bar y^p$ and let $u$ be the $n$-th element of $\bar y^\nu$ (i.\ e.\ the handle of $z$).
\begin{itemize}
	\item If $z\not \in  \bar y_{i,0}^p$, then choose $\sigma(z)$ to be any element in $Y^p$ which has handle $\sigma_0(u)$ and is not contained in $ \bar y_{i,0}^p$ (as $Y$ is a $|G|$-cover and $\tp_\Gamma(u) = \tp_\Gamma(\sigma_0(u))$, using Fact \ref{fac: decomposition}(4) there must be infinitely many elements in $Y^p$ for which $\sigma_0(u)$ is a handle, so we can choose one of them which is not contained in the finite tuple $\bar{y}^p_{i,0}$).
	\item If $z \in  \bar y_{i,0}^p$, then we have that $\sigma_0$ fixes $z$ as well as the handle $u$ of $z$. In this case let $\sigma(z)$ be equal to $z$.
	\end{itemize}
Finally, we define $\sigma$ on each element of $\bar y^\iota$ as the identity map. Let $\bar y'= \bar y^{\prime \nu \frown} \sigma(\bar{y}^p)^{\frown} \bar{y}^\iota $. Then we have that  for all $y \in \bar{y}^{\frown} \bar{y}_{i,0}$:
\begin{enumerate}
	\item $\sigma$ is well defined;
\item $\sigma$ fixes all elements in $\bar{y}_{i,0}$;
\item $\sigma$ respects types and handles by construction;
\item $\tp_{\Gamma}(\bar{y}^{\nu}, \bar{y}^{\nu}_{i,0}) = \tp_{\Gamma}(\sigma(\bar{y}^{\nu}, \bar{y}^{\nu}_{i,0}))$ as $\sigma(y) = \sigma_0(y)$ for all $y \in \bar{y}^{\nu \frown} \bar{y}^\nu_{i,0}$.
\end{enumerate}
%

Now consider $\bar{m}$ and $(\bar{m}_{i,j} : i,j \in \kappa)$ as tuples of elements in $\langle M \rangle$, which is a model of the stable theory $\Th(\langle M\rangle)$. Moreover, as $(\bar{m}_{i,j} : i,j \in \kappa)$ has $L_G$-mutually indiscernible rows and $\Th(\langle M \rangle)$ eliminates quantifiers, $(\bar{m}_{i,j} : i,j \in \kappa)$ has mutually indiscernible rows in the sense of $\Th(\langle M \rangle)$. Hence, by Fact \ref{fac: char of NTP2} again, there is some $\beta \in \kappa$ such that for each $i > \beta$ there is some $\tau \in \Aut(\langle M \rangle)$ fixing $\bar{m}_{i,0}$ and such that $(\bar{m}_{i,j} : j \in \kappa)$ is indiscernible over $\bar{m}' := \tau( \bar{m})$. 

Fix some $i > \max\{\alpha, \beta\}$ and let $\bar y'$ and  $\bar m'$ be chosen as above. Then by Lemma \ref{Lem_GlueAut} we find an automorphism of $G$ which maps  $\bar{y} \bar{m}^{\frown} \bar{y}_{i,0} \bar{m}_{i,0}$ to $\bar{y}' (\bar{m}')^{\frown} \bar{y}_{i,0} \bar{m}_{i,0}$, hence 
$$\tp_G(\bar{y}' \bar{m}' / \bar{y}_{i,0} \bar{m}_{i,0}) = \tp_G(\bar{y} \bar{m} / \bar{y}_{i,0} \bar{m}_{i,0}).$$ 
In particular, $G \models \theta(\bar{y}' \bar{m}' , \bar{y}_{i,0} \bar{m}_{i,0})$. We will show that
$$\tp_G( \bar{y}_{i,0} \bar{m}_{i,0}/ \bar{y}' \bar{m}') = \tp_G(\bar{y}_{i,1} \bar{m}_{i,1}/ \bar{y}' \bar{m}'),$$
 which would then contradict the assumption that $\{ \theta(x', \bar{y}_{i,j} \bar{m}_{i,j}) : j \in \kappa \}$ is $2$-inconsistent.

We show that sending    $ \bar y' \bar{y}_{i,0}$ to $ \bar y'\bar{y}_{i,1}$ is a well-defined bijection $f_0$. The only thing to check is that if the $n$-th element $z$ of $\bar y_{i,0}$ is an element of $\bar y'$, then the $n$-th element of $\bar y_{i,1}$ is equal to $z$. This is true as by construction  we have that the sequence $(\bar y_{i,j}: j\in \kappa)$ is indiscernible over $\bar y'\cap  \bar y_{i,0}$ (as $\bar y'\cap  \bar y_{i,0} = \bar y \cap  \bar y_{i,0}$ by construction, and $(\bar y_{i,j}: j\in \kappa)$ is indiscernible over $\bar y \cap  \bar y_{i,0}$ by assumption). Moreover, we have the following properties for $f_0$:
\begin{enumerate}
\item $f_0$ fixes all elements in $\bar{y}'$ (by construction);
\item $f_0$ respects types and handles (by construction);
\item $\tp_{\Gamma}(\bar{y}^{\prime \nu}, \bar{y}^{\nu}_{i,0}) = \tp_{\Gamma}(f_0(\bar{y}^{\prime \nu}, \bar{y}^{\nu}_{i,0}))$ (since by the choice of $\bar{y}^{\prime \nu}$ above, we have that $(\bar{y}^\nu_{i,j}: j\in \kappa)$ is indiscernible over $\bar{y}^{\prime \nu}$ in $\Gamma(G)$).
\end{enumerate}


Similarly, by the choice of $\bar{m}'$ above, the sequence $(\bar{m}_{i,j} : j \in \kappa)$ is indiscernible over $\bar{m}'$, so $\tp_{\langle M \rangle}(\bar{m}_{i,0}, \bar {m}') = \tp_{\langle M \rangle}(\bar{m}_{i,1}, \bar {m}') $

Again, Lemma \ref{Lem_GlueAut} gives us an automorphism of $G$ sending $\bar{y}_{i,0} \bar{m}_{i,0}$ to $\bar{y}_{i,1} \bar{m}_{i,1}$ and fixing $\bar{y}' \bar{m}'$, as wanted.


\end{proof}

\begin{remark}
A slight modification of the same proof shows that $\Th(G(C))$ is \emph{strong} if and only if $\Th(C)$ is strong (see \cite[Sections 2 and 3]{chernikov2014theories} for the relevant definitions).
\end{remark} 

However, since in the proof we have to throw away a finite, but unknown number of rows, this leaves the following problem. 
\begin{problem}
Assume that $\Th(C)$ is of finite burden. Does $\Th(G(C))$ also have to be of finite burden?
\end{problem}

Finally, it would be interesting to investigate what other properties from generalized stability are preserved by Mekler's construction. For example:

\begin{conj}
If $\Th(C)$ is NSOP$_1$ then $\Th(G(C))$ is also NSOP$_1$.\end{conj}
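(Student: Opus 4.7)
The plan is to adapt the template from the proofs of Theorems \ref{thm: NIP}, \ref{thm: k-dependence is preserved}, and \ref{thm: NTP2 is preserved} to the tree-indexed setting natural for $\NSOP_1$. Suppose for contradiction that $\Th(G(C))$ has $\SOP_1$ witnessed by some $\phi(x,y) \in L_G$ and a tree $(a_\eta)_{\eta \in 2^{<\omega}}$ in a monster model $G$, with consistency along every branch and $2$-inconsistency of $\{\phi(x,a_{\eta \frown 1}), \phi(x, a_\nu)\}$ whenever $\eta \frown 0 \preceq \nu$. Fix a transversal $X$ and a linearly independent $H \subseteq Z(G)$ over $G'$ with $G = \langle X \rangle \times \langle H \rangle$, and decompose $a_\eta = t_\eta(\bar{x}_\eta, \bar{h}_\eta)$ with $\bar{x}_\eta \subseteq X$ and $\bar{h}_\eta \subseteq H$. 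By a pigeonhole argument on the countable tree $2^{<\omega}$, passing to a subtree still witnessing $\SOP_1$, assume all $t_\eta$ coincide with a fixed term $t$ and the lengths of the subtuples in the decomposition $\bar{x}_\eta = \bar{x}_\eta^{\nu \frown} \bar{x}_\eta^{p \frown} \bar{x}_\eta^{\iota}$ are constant, after front-loading the handles of $\bar{x}_\eta^p$ into $\bar{x}_\eta^\nu$. Set $\psi(x, y') := \phi(x, t(y'))$ and $\bar{b}_\eta := \bar{x}_\eta^{\frown}\bar{h}_\eta$.

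The key external ingredient is a suitable tree-Ramsey extraction principle (e.g.\ the strong tree-indiscernibility of Kim--Kim--Scow / Chernikov--Ramsey), which would yield a tree $(\bar{c}_\eta)_{\eta \in 2^{<\omega}}$ with $\bar{c}_\eta = \bar{y}_\eta^{\frown} \bar{m}_\eta$, modeled on $(\bar{b}_\eta)_{\eta \in 2^{<\omega}}$ and sufficiently indiscernible along $2^{<\omega}$, such that the $\SOP_1$-pattern is preserved (with the same $\psi$). By Proposition \ref{prop: type def} applied along the $\omega$-many type-definable conditions, the set of all $\bar{y}_\eta$'s remains a partial transversal, extending to some transversal $Y$ of $G$, and the set of all $\bar{m}_\eta$'s remains linearly independent over $G'$ in $Z(G)$, extending to some $M$ with $G = \langle Y \rangle \times \langle M \rangle$. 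Handles remain in the correct positions in $\bar{y}_\eta^\nu$ by Fact \ref{fact: handle}. Now take a witness $\bar{b} = s(\bar{y},\bar{m})$ in $G$ consistent with some prescribed branch, and push $s$ into the formula as in the $\NTP_2$ proof to obtain $\theta(x',y')$ satisfying the same $\SOP_1$-pattern against $(\bar{c}_\eta)$.

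To derive a contradiction, project the tree onto the two sorts that are assumed tame. Viewing $(\bar{y}_\eta^\nu : \eta \in 2^{<\omega})$ as a tree in $\Gamma(G) \models \Th(C)$, the hypothesis that $\Th(C)$ is $\NSOP_1$ together with the appropriate tree-indiscernibility characterization should produce, for every incomparable pair $(\eta \frown 1, \nu)$ with $\eta \frown 0 \preceq \nu$ picked out by the pattern, an $\Aut(\Gamma(G))$-conjugacy of $\bar{y}^\nu_{\eta \frown 1}$ and $\bar{y}^\nu_\nu$ over a suitable branch/initial-segment parameter. Analogously, since $\Th(\langle M \rangle)$ is stable and hence $\NSOP_1$ (and has quantifier elimination), one gets the parallel statement for $(\bar{m}_\eta)_{\eta \in 2^{<\omega}}$ over the corresponding parameter in $\langle M \rangle$. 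Combining both via Lemma \ref{Lem_GlueAut}, constructing the map on $\bar{y}^p$ and $\bar{y}^\iota$ element-by-element in a type- and handle-respecting way using that $Y$ is a $|G|$-cover of its $1^\nu$-part (Fact \ref{fac: decomposition}(4),(6)), one produces an automorphism of $G$ that moves the ``inconsistent'' witness parameters onto the ``consistent'' ones while fixing $\bar{b}$, contradicting $2$-inconsistency.

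The main obstacle is pinning down the right tree-indiscernibility notion at both stages. Unlike $\NIP$, $k$-dependence, and $\NTP_2$, where one has clean formula-free characterizations in terms of indiscernibility of sequences or arrays over single parameters (Facts \ref{fac: char of NIP}, \ref{fac: char of NIP_k by preserving indisc}, \ref{fac: char of NTP2}), the natural formula-free characterization of $\NSOP_1$ involves simultaneous control of consistency along branches and of inconsistency between incomparable nodes, which depends on \emph{pairs} of nodes in a non-local way. One must therefore (i) choose a tree Ramsey class robust enough that the \emph{type-definable} transversal and independence-over-$G'$ properties from Proposition \ref{prop: type def} are preserved by the extraction, and (ii) ensure that the element-by-element construction of $\sigma$ on the $p$- and $\iota$-parts of $Y$ (analogous to the $\NTP_2$ proof) can be carried out \emph{coherently} for an incomparable pair $(\eta\frown 1, \nu)$ so that Lemma \ref{Lem_GlueAut} applies at both nodes in the same automorphism. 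A secondary subtlety is that the extraction might need to be iterated or stratified (first on the $1^\nu$-part, then on the center) in order to align the $\Gamma(G)$-side and $\langle M \rangle$-side witnesses of $\NSOP_1$ before gluing.
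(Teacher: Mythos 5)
This statement appears in the paper as a \emph{Conjecture}, not a theorem, and the paper offers no proof: the authors only remark that ``we expect that this could be verified using the methods of this paper and the criterion from \cite{chernikov2016model} and \cite{kaplan2017kim}.'' So there is no paper argument to compare your proposal against; any proof you supply is necessarily new work.

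Your outline is consistent with what the authors anticipate: decompose along a transversal, push the term into the formula, extract a suitably indiscernible configuration while preserving the type-definable conditions of Proposition \ref{prop: type def}, project onto $\Gamma(G)$ and $\langle M \rangle$, and glue automorphisms via Lemma \ref{Lem_GlueAut}. But as it stands the proposal is a plan, not a proof, and the gaps you flag yourself are exactly where the real work is. Concretely: (i) you invoke ``a suitable tree-Ramsey extraction'' without committing to a modeling class (str-indiscernible trees versus s-indiscernible trees versus the variant in Kaplan--Ramsey), and you do not verify that passing to such a tree preserves the $\operatorname{SOP}_1$ pattern with the fixed witness formula $\psi$ and simultaneously preserves the partial types $\Phi$ and $\Psi$ --- for sequences and arrays this is immediate from EM-type transfer, but for trees the appropriate notion of ``based on'' has to be checked against the specific quantifier-free tree language; (ii) unlike Facts \ref{fac: char of NIP}, \ref{fac: char of NIP_k by preserving indisc}, and \ref{fac: char of NTP2}, there is no formula-free tree-indiscernibility characterization of $\NSOP_1$ quoted or proved in your sketch that produces, for an incomparable pair $(\eta^\frown 1,\nu)$ with $\eta^\frown 0\preceq\nu$, an automorphism fixing a witness realization and moving one node's parameter onto the other's --- the Kaplan--Ramsey criterion is phrased via Kim-independence, and translating it into the automorphism-gluing format that Lemma \ref{Lem_GlueAut} needs is a nontrivial step you only gesture at; (iii) the coherence issue at the gluing stage is genuinely delicate, since your choices on the $p$- and $\iota$-parts must be made compatibly for both nodes under a \emph{single} automorphism while respecting handles, which is strictly harder than the single-index move in the $\NTP_2$ proof. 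Until (ii) and (iii) are pinned down, this remains a plausible sketch in the spirit of the authors' stated expectation rather than a proof of the conjecture.
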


We expect that this could be verified using the methods of this paper and the criterion from \cite{chernikov2016model} and \cite{kaplan2017kim}.

\if 0

\subsection{A criterion for $2$-dependence}

We want to provide a formula-free characterization of $n$-dependence
which doesn't include any assumption of indiscernibility of the witnessing
sequence over the additional parameters (as it is the case in my paper
with Kota and Daniel). We can do it for $2$-dependence under some
set-theoretic assumption.
\begin{lemma}
\label{lem: basic no shattering}Let $\phi\left(x;y_{1},y_{2}\right)$
be $2$-dependent. Then there is some $n\in\mathbb{N}$ such that
for any $c\in\mathbb{M}_{x}$ and $I\subseteq\mathbb{M}_{y_{1}},J\subseteq\mathbb{M}_{y_{2}}$
endless mutually indiscernible sequences, for any $A\subseteq I$
of size $>n$ there is some $b_{A}\in J$ such that $A$ cannot be
shattered by the family $\left\{ \phi\left(c,y_{1},b\right):b\in J,b>b_{A}\right\} $.\end{lemma}
\begin{proof}
Assume that $I,J$ are endless mutually indiscernible sequences and
$c$ is such that the conclusion is not satisfied for any $n\in\omega$.
Let $D\subseteq I\times J$ be any finite set. Let $a_{1}<\ldots<a_{m}$
and $b_{1}<\ldots<b_{n}$ list the projections of $D$ on $I$ and
$J$, respectively. By assumption, there is some $A\subseteq I$ of
size $m$ such that for any $b'\in B$, $A$ is shattered by the family
$\left\{ \phi\left(c,y_{1},b\right):b\in J,b>b'\right\} $. List $A$
as $a_{1}'<\ldots<a_{m}'$. Then we can choose some $b_{1}'<\ldots<b_{n}'\in J$
such that $\models\phi\left(c,a'_{i},b_{j}'\right)\iff\left(a_{i},b_{j}\right)\in D$.
As $I,J$ are mutually indiscernible, taking an automorphism of $\mathbb{M}$
sending $a_{i}'$ to $a_{i}$ and $b_{j}'$ to $b_{j}$, for all $1\leq i\leq m,1\leq j\leq n$,
$c$ is sent to some $c_{D}$ such that $\models\phi\left(c_{D},a_{i},b_{j}\right)\iff\left(a_{i},b_{j}\right)\in D$.
This implies that $\phi\left(x;y_{1},y_{2}\right)$ is not $2$-dependent,
a contradiction. Hence the conclusion holds for $c,I,J$ for some
$n$.

By compactness it is not hard to conclude that $n$ can be chosen
depending only on $\phi$ (and not on $I,J,c$).
\end{proof}
We will need the following lemma (originally from Shelah, with simplifications
by Adler and Casanovas, see e.g. ***)
\begin{fact}
\label{fact: ded lemma}If $\kappa$ is an infinite cardinal, $\mathcal{F}\subseteq2^{\kappa}$
and $\left|\mathcal{F}\right|>\ded\kappa$, then for each $n\in\omega$
there is some $S\subseteq\kappa$ such that $\left|S\right|=n$ and
$\mathcal{F}\restriction S=2^{S}$.
\end{fact}
Given sets $A\subseteq\mathbb{M}_{x}$, $B\subseteq\mathbb{M}_{y}$
and a formula $\phi\left(x,y\right)\in\mathcal{L}$, we denote by
$S_{\phi,B}\left(A\right)$ the set of all $\phi$-types over $A$
realized in $B$, and by $S_{B}\left(A\right)$ the set of all complete
types over $A$ realized in $B$.
\begin{prop}
\label{prop: few types on a tail}Let $T$ be $2$-dependent, let
$\kappa\geq\left|T\right|$ be an infinite cardinal, and let $\lambda>\kappa$
be a regular cardinal. Then for any mutually indiscernible sequences
$I=\left(a_{i}:i\in\kappa\right),J=\left(b_{j}:j\in\lambda\right)$
of finite tuples and a finite tuple $c$, there is some $\beta\in\lambda$
such that $\left|S_{J_{>\beta}}\left(Ic\right)\right|\leq\left(\ded\kappa\right)^{\left|T\right|}$.\end{prop}
\begin{proof}
Let $I,J$ and $c$ be given. We will show that for each $\phi\left(x,y_{1},y_{2}\right)\in\mathcal{L}$
there is some $\beta_{\phi}\in\lambda$ such that $\left|S_{\phi,J_{>\beta_{\phi}}}\left(Ic\right)\right|\leq\ded\kappa$.
This is enough, as then we can take any $\beta\in\lambda$ with $\beta>\beta_{\phi}$
for all $\phi\in\mathcal{L}$ (possible as $\lambda=\cof\left(\lambda\right)>\left|T\right|$),
and $\left|S_{J_{>\beta}}\left(Ic\right)\right|\leq\left|\prod_{\phi\in\mathcal{L}}S_{\phi,J_{>\beta_{\phi}}}\left(Ic\right)\right|\leq\left(\ded\kappa\right)^{\left|T\right|}$.

So let $\phi\in\mathcal{L}$ be fixed, and assume that for any $\beta\in\lambda$,
$\left|S_{\phi,J_{>\beta_{\phi}}}\left(Ic\right)\right|>\ded\kappa$.
Then by Fact \ref{fact: ded lemma}, considering $\mathcal{F}=\left\{ f_{p}:p\in S_{\phi,J_{>\beta_{\phi}}}\left(Ic\right)\right\} $
(where $f_{p}\in2^{\kappa}$ is given by $f_{p}\left(\alpha\right)=1\iff\phi\left(c,a_{\alpha},y_{2}\right)\in p$,
for all $\alpha\in\kappa$), for any $n\in\omega$ there is \emph{some}
$S\subseteq I$, $\left|S\right|=n$, such that $S$ is shattered
by the family $\left\{ \phi\left(c,y_{1},b_{j}\right):j\in\lambda,j>\beta\right\} $.
Using regularity of $\lambda$, we can choose by transfinite induction
a strictly increasing sequence $\left(\beta_{\alpha}:\alpha\in\lambda\right)$
with $\beta_{\alpha}\in\lambda$ such that for each $\alpha\in\lambda$
there is some $S_{\alpha}\subseteq I,\left|S_{\alpha}\right|=n$ shattered
by the family $\left\{ \phi\left(c,y_{1},b_{j}\right):j\in\lambda,\beta_{\alpha}<j<\beta_{\alpha+1}\right\} $.
As $\lambda>\kappa=\kappa^{n}$ is regular, passing to a subsequence
we may assume that there is some $S\subseteq I,\left|S\right|=n$
such that $S_{\alpha}=S$ for all $\alpha\in\lambda$, i.e. this set
$S$ can be shattered arbitrarily far into the sequence. Now by Lemma
\ref{lem: basic no shattering}, this contradicts $2$-dependence
of $\phi$ if we take $n$ large enough.\end{proof}
\begin{remark}
We can also give a finitary counterpart, with polynomial bound in
place of $\ded$.
\end{remark}

\begin{lemma}
\label{lem: extension exists}For any cardinal $\kappa$ and any regular
cardinal $\lambda\geq2^{\kappa}$ there is a bipartite graph $\mathcal{G}_{\kappa,\lambda}=\left(\kappa,\lambda,E\right)$
satisfying the following: for any sets $A,A'\subseteq\kappa$ with
$A\cap A'=\emptyset$ and $b\in\lambda$ there is some $b^{*}\in\lambda$,
$b^{*}>b$ satisfying $\bigwedge_{a\in A}E\left(a,b^{*}\right)\land\bigwedge_{a'\in A'}E\left(a',b^{*}\right)$.\end{lemma}
\begin{proof}
Let $\lambda\geq2^{\kappa}$ be any regular cardinal. Let 
\[
D:=\left\{ \left(A,A',b\right):A,A'\subseteq\kappa,\,A\cap A'=\emptyset,\,b\in\lambda\right\} \mbox{.}
\]
Then $\left|D\right|\leq\lambda$ by assumption, let's enumerate it
as $\left(\left(A_{\alpha},A'_{\alpha},b_{\alpha}\right):\alpha<\lambda\right)$.
We define $E_{\alpha}\subseteq\kappa\times\lambda$ by transfinite
induction on $\alpha<\lambda$. On step $\alpha$, we choose some
$c_{\alpha}\in\lambda$ such that $c_{\alpha}>\left\{ b_{\beta},c_{\beta}:\beta<\alpha\right\} $
--- possible by regularity of $\lambda$, and we take $E_{\alpha}:=\left\{ \left(a,c_{\alpha}\right):a\in A_{\alpha}\right\} $.
Let $E:=\bigsqcup_{\alpha<\lambda}E_{\alpha}$ --- it satisfies the
requirement by construction.\end{proof}
\begin{defn}
We say that a theory $T$ is \emph{globally $2$-dependent} if there
are cardinals $\kappa\leq\lambda$ as above such that the following
holds. Given any mutually indiscernible sequences $I=\left(a_{i}:i\in\kappa\right),J=\left(b_{j}:j\in\lambda\right)$
of finite tuples and a finite tuple $c$, if $\mathcal{G}_{\kappa,\lambda}$
is as above the there are some $i\in\kappa$ and $j,j'\in\lambda$
such that $ca_{i}b_{j}\equiv ca_{i}b_{j'}$ but $E\left(i,j\right)\land\neg E\left(i,j'\right)$
holds.
\end{defn}
So the idea is that $T$ is globally $2$-dependent if on mutually
indiscernible sequences, we cannot distinguish the edges from the
non-edges of a random graph not only by any single formula formula,
but also by complete types.
\begin{remark}
If $T$ is not $2$-dependent, then it is not globally $2$-dependent.\end{remark}
\begin{proof}
Let $\phi\left(x,y_{1},y_{2}\right)$ be a formula witnessing failure
of $2$-dependence. Then for any $\kappa,\lambda$ we can find some
mutually indiscernible sequences $I,J$ such that the family $\left\{ \phi\left(c,y_{1},y_{2}\right):c\in\mathbb{M}\right\} $
shatters $I\times J$. In particular, we can find $c$ such that $\mathbb{M}\models\phi\left(c,a_{i},b_{j}\right)\iff\mathcal{G}_{\kappa,\lambda}\models E\left(a_{i},b_{j}\right)$,
contradicting global $2$-dependence.\end{proof}
\begin{prop}
Let $T$ be a countable $2$-dependent theory and assume that there
is some cardinal $\kappa$ such that $\ded\kappa<2^{\kappa}$. Then
$T$ is globally $2$-dependent.\end{prop}
\begin{proof}
Fix such a $\kappa$, and let $\lambda$ be any regular cardinal $\geq2^{\kappa}$.
Let $\mathcal{G}_{\kappa,\lambda}$ be as given by Lemma \ref{lem: extension exists}.
Let $I,J$ and $c$as above be given. By Proposition \ref{prop: few types on a tail},
there is some $\beta\in\lambda$ such that $\left|S_{J_{>\beta}}\left(I\right)\right|\leq\left(\ded\kappa\right)^{\aleph_{0}}$.
On the other hand, by definition of $\mathcal{G}_{\kappa,\lambda}$,
we still have $\left|S_{E,\left\{ \alpha\in\lambda:\alpha>\beta\right\} }\left(\kappa\right)\right|=2^{\kappa}>\left(\ded\kappa\right)^{\aleph_{0}}$
by assumption. Then we can find some $j,j'\in\lambda$ such that $\tp_{E}\left(j/\kappa\right)\neq\tp_{E}\left(j'/\kappa\right)$
but $\tp\left(b_{j}/Ic\right)=\tp\left(b_{j'}/Ic\right)$. But then
there is some $i\in\kappa$ such that $E\left(i,j\right)\leftrightarrow\neg E\left(i,j'\right)$
and still $b_{j}a_{i}c\equiv b_{j'}a_{i}c$, as wanted.
\end{proof}
By a theorem of Mitchell, for any $\kappa$ with $\cof\left(\kappa\right)>\aleph_{0}$
it is consistent that $\ded\kappa<2^{\kappa}$. Hence this criterion
can always be used to determine $2$-dependence, in some model of
ZFC (and then sometimes set-theoretic absoluteness can be applied).
\begin{problem}
Is it true that $n$-dependent implies globally $n$-dependent (defined
analogously), in ZFC, or at least consistently for $n>2$?\end{problem}
\begin{remark}
Let $T$ be $n$-dependent and $\omega$-categorical. Then $T$ is
globally $n$-dependent (since every type in finitely many variables
is equivalent to a formula, hence $n$-dependent and can't define
the random $n$-hypergraph on mutually indiscernible sequences).
\end{remark}

\fi

\bibliography{refs}
\end{document}